\documentclass[USenglish]{article}
\usepackage[utf8]{inputenc}

\usepackage{amssymb,amsmath,amsthm,amscd,epsf,latexsym,verbatim,graphicx,amsfonts,hyperref,epstopdf,xcolor,wasysym}
\input epsf.tex
\usepackage[utf8]{inputenc}
\usepackage[T1]{fontenc}
\usepackage{graphicx}
\usepackage{palatino, url, multicol}
\usepackage{subfig}
\usepackage{geometry}                
\usepackage{hyperref}
\usepackage{afterpage}
\geometry{letterpaper}                   
\DeclareGraphicsRule{.tif}{png}{.png}{`convert #1 `dirname #1`/`basename #1 .tif`.png}


\theoremstyle{theorem}
\newtheorem{theorem}{Theorem}[section]

\newtheorem{lemma}[theorem]{Lemma}
\newtheorem{proposition}[theorem]{Proposition}
\newtheorem{corollary}[theorem]{Corollary}

\theoremstyle{definition}

\theoremstyle{definition}

\theoremstyle{definition}
\newtheorem{remark}[theorem]{Remark}
\theoremstyle{definition}
\newtheorem{example}[theorem]{Example}
\theoremstyle{definition}
\newtheorem{definition}[theorem]{Definition}
\theoremstyle{definition}

\theoremstyle{definition}
\newtheorem{question}[theorem]{Question}
\theoremstyle{question}




\setlength{\textwidth}{6.0truein}
\setlength{\textheight}{9.0truein}
\setlength{\voffset}{-0.3truein}
\setlength{\oddsidemargin}{0.5truein}
\setlength{\evensidemargin}{0truein}

\setcounter{totalnumber}{50}
\setcounter{topnumber}{50}
\setcounter{bottomnumber}{50}

\captionsetup{width=0.8\textwidth} 

\title{Periodicity and ergodicity in the trihexagonal tiling}
\author{Diana Davis and W. Patrick Hooper}

\newif\ifdraft
\draftfalse

\newcommand{\compat}[1]{{\ifdraft{\textcolor{violet}{\textrm{{\bf Pat says} {``}#1{''}}}}\else\ignorespaces\fi}}

\newcommand{\comref}[1]{{\ifdraft{\textcolor{red}{\textrm{{\bf Referee says} {``}#1{''}}}}\else\ignorespaces\fi}}


\renewcommand{\Im}{\operatorname{Im}}

\def\0{{\mathbf 0}}
\def\c{{\mathbf c}}
\def\e{{\mathbf v}}
\def\u{{\mathbf u}}
\def\v{{\mathbf v}}
\def\w{{\mathbf w}}
\def\x{{\mathbf x}}

\def\C{{\mathbb C}}
\def\bs{{\setminus}}
\def\G{{\mathbb G}}
\def\H{{\mathbb H}}

\def\R{{\mathbb R}}
\def\Z{{\mathbb Z}}
\def\RP{{{\mathbb R}{\mathbb P}}}
\def\PGL{{\mathrm{PGL}}}
\def\PSL{{\mathrm{PSL}}}
\def\GL{{\mathrm{GL}}}
\def\O{{\mathrm{O}}}
\def\SO{{\mathrm{SO}}}
\def\SL{{\mathrm{SL}}}
\def\hol{{\mathbf{hol}}}
\def\Area{{\mathit{Area}}}
\def\Aff{{\mathit{Aff}}}
\def\Trans{{\mathit{Trans}}}
\newcommand{\twotwo}[4]{\left[\begin{array}{rr} #1 & #2 \\ #3 & #4 \end{array}\right]}
\def\vis{{\mathit{vis}}}
\def\dev{{\mathit{dev}}}

\begin{document}
\maketitle

\begin{abstract}
We consider the dynamics of light rays in the trihexagonal tiling where triangles and hexagons are transparent and have equal but opposite indices of refraction. We find that almost every ray of light is dense in a region of a particular form: the regions have infinite area and consist of the plane with a periodic family of triangles removed. We also completely describe initial conditions for periodic and drift-periodic light rays.
\end{abstract}

\section{Introduction}

Consider a partition of the plane into regions that are each made up of one of two different transparent materials
so that the refraction coefficient for light traveling between the two materials is $-1$. This means that the two materials have indices of refraction with equal magnitude but opposite sign,
and ensures that a light ray exiting a region made of one material making an angle of $\theta$ with the normal to the boundary, enters a region made of the other material making an angle of $-\theta$ with the normal to the boundary. See the left side of Figure \ref{fig:trajectory}.
Materials with negative index of refraction were discovered about 15 years ago and have been heavily studied since, see \cite{shelby,smith}. The connection between these materials and planar tilings was made in \cite{bloch}. If it were possible to create metamaterials in sufficiently large quantities, we could actually construct our tiling out of these materials, shoot a laser through it, and observe the behaviors that we discuss here.

\begin{figure}
\begin{center}
\includegraphics[height=2in]{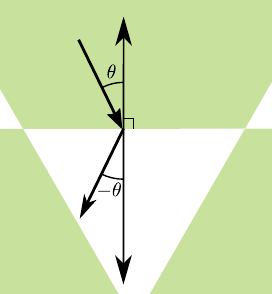}
\hspace{0.5in}
\includegraphics[height=2in]{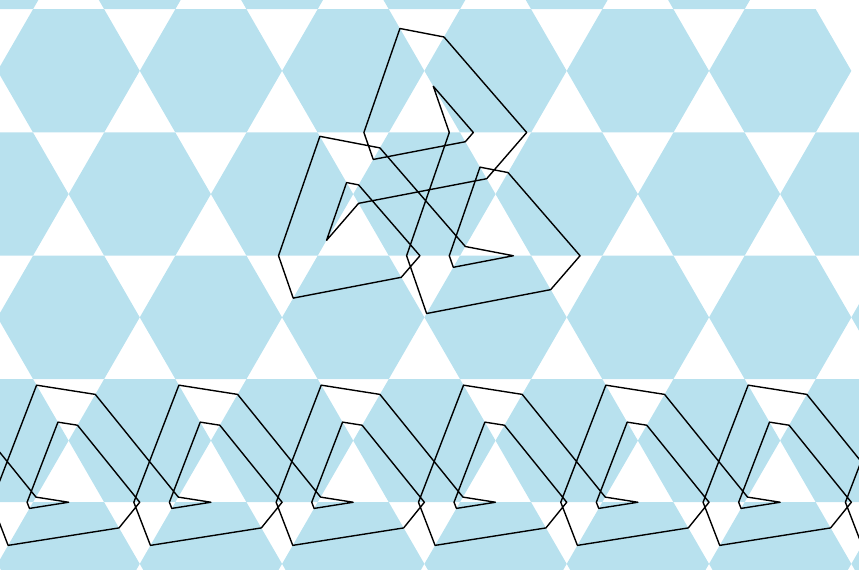}
\caption{Left: a light ray passing between media with opposite refraction indices.
Right: periodic and drift-periodic trajectories in the trihexagonal tiling.}
\label{fig:trajectory}
\end{center}
\end{figure}

The \emph{trihexagonal tiling} is the edge-to-edge tiling where an equilateral triangle and a regular hexagon meet at each edge. We consider the behavior of a light beam in such a tiling where the triangles are made with one material
and hexagons are made of a second material with opposite index of refraction.
See the right side of Figure \ref{fig:trajectory} for two light beams.
Trajectories exhibit a range of behaviors.
They may be periodic or {\em drift-periodic} (invariant under a non-trivial translational symmetry of the tiling). Such behaviors have been seen before in a number of tilings \cite{icerm}. However we find that a randomly chosen trajectory exhibits a new kind of behavior. Say that a trajectory {\em illuminates} a point in the plane if the point lies in the closure of the trajectory.

\begin{theorem}
\label{thm:1}
For almost every initial point and direction, the trajectory with this initial position and direction will illuminate all of the plane except a periodic family of triangular open sets in centers of either the upward-pointing triangles or downward-pointing triangles.
\end{theorem}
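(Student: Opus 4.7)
The plan is to convert the refraction dynamics into a straight-line flow on a flat surface. At each edge crossing, refraction with coefficient $-1$ reflects the ray's velocity across the edge normal, and this can be undone by an appropriate reflection of the downstream tile so that the ray continues in a straight line. Iterating this unfolding over all edge crossings produces a flat surface on which the refracting trajectory becomes a straight line. Since the three families of edge normals in the trihexagonal tiling differ by multiples of $60^\circ$, the reflections across them generate a finite dihedral group $D_3$ of order $6$, and a generic initial direction $\phi_0$ has a $D_3$-orbit of size $6$. Quotienting by the translation lattice of the tiling then yields a compact flat surface $X$ assembled from finitely many copies of the fundamental domain (one hexagon, one upward-pointing triangle, one downward-pointing triangle).

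Analyzing the topology and monodromy of $X$ should reveal the theorem's up/down dichotomy: I expect $X$ to decompose into two invariant pieces $X_+$ and $X_-$ distinguished by a parity invariant arising from the $\Z/3$ rotation subgroup of $D_3$ together with the orientation structure of the cover. A generic trajectory is confined to one piece, and this piece projects to the plane with a periodic family of small open triangles---centered on either the upward- or the downward-pointing tiles---removed from the image. The triangular shape of these missing regions should come from the $\Z/3$ rotational symmetry about each triangle's centroid, which constrains which chords through the triangle are ever traversed by the trajectory.

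The main analytic step is to prove that the straight-line flow on $X_\pm$ is ergodic for almost every direction. I would attempt to show $X_\pm$ is a Veech surface by identifying its affine automorphism group: the $60^\circ$ rotation of the trihexagonal tiling descends to $X$, and drift-periodic trajectories of the kind visible in Figure~\ref{fig:trajectory} supply parabolic elements. If these generate a lattice in $\SL(2,\R)$, the Veech dichotomy yields unique ergodicity in all but countably many directions; if not, one can instead invoke Masur's criterion or the Eskin-Mirzakhani-Mohammadi classification of $\SL(2,\R)$-orbit closures to still obtain ergodicity in almost every direction. Ergodicity then implies almost every trajectory is dense in its component $X_\pm$ and hence illuminates the corresponding subset of the plane, which is the statement of the theorem. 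The principal obstacle is the Veech group computation; the decomposition into $X_\pm$ and the identification of the missing triangles are more concrete geometric bookkeeping.
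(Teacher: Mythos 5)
There is a genuine gap at the heart of your plan: you quotient by the translation lattice to get a \emph{compact} flat surface $X$ and then aim to prove ergodicity of the straight-line flow on $X$ via Veech dichotomy, Masur's criterion, or Eskin--Mirzakhani--Mohammadi. But the theorem asserts density of a trajectory in an \emph{infinite-area} subset of the plane, and density in the compact quotient says nothing about this: a trajectory can equidistribute on $X$ while remaining, say, drift-periodic (confined to a strip) in the plane, since the planar trajectory is encoded not just by its projection to $X$ but by the $\Z^2$-valued cocycle recording which fundamental domain it occupies. What is actually needed is ergodicity of the straight-line flow on the infinite ($\Z^2$-)cover, where Veech dichotomy fails and EMM does not apply; indeed for such covers ergodicity in almost every direction can genuinely fail (the paper cites \cite{fu2} for examples). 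This is why the paper, after constructing the infinite translation surface $S$ covering a compact torus $Z^\circ$ and proving the orbit equivalence (Theorem \ref{thm:orbit equivalence}), cannot stop at the Veech group computation (Proposition \ref{prop:generators}): it must run the Hubert--Weiss essential-value machinery for the $\Z^2$-valued skew product (\S\ref{sect:hubert weiss}, Theorem \ref{thm:ergodicity criterion}), using ``well-approximation by strips'' certified by recurrence of the Teichm\"uller geodesic to explicit cusp neighborhoods of $\H^2/V(Z^\circ)$. Unique ergodicity on the compact quotient (via Masur's criterion) enters only as an \emph{input} to that criterion, not as the conclusion. This is also why the resulting set of ergodic directions $\mathcal{E}$ is full measure but not co-countable, in contrast to what Veech dichotomy would give on a compact lattice surface.

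Two smaller corrections to your ``bookkeeping.'' First, your invariant splitting $X_\pm$ is not a parity invariant of the covering: the paper's splitting is the dynamical invariant of clockwise versus counter-clockwise travel around hexagon centers, which is flow-invariant because trajectories through a hexagon center are singular (Lemmas \ref{lem:singular-center} and following). Second, the missing triangles are not cut out by a $\Z/3$-symmetry constraint on chords; their boundaries are segments of the singular trajectories through the centers of the adjacent hexagons, and whether the upward- or downward-pointing triangles are missed depends on whether $\theta$ lies in $[\frac{\pi}{3},\frac{\pi}{2})$ or $(\frac{\pi}{2},\frac{2\pi}{3}]$ (Lemma \ref{lem:centers}). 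Finally, note that deducing the planar statement requires ergodicity of four flows simultaneously ($T_{\pm\theta,\pm}$), since a point in a triangle travels in directions from $\{-\theta,\frac{2\pi}{3}-\theta,\frac{4\pi}{3}-\theta\}$; this is handled in the paper's deduction of Theorem \ref{thm:1} from Theorem \ref{thm:main ergodic} at the end of \S\ref{sect:applications}. Your unfolding construction and the order-six group of directions are consistent with the paper's setup, but without replacing the compact-surface ergodicity argument by an argument on the infinite cover, the proof does not establish the theorem.
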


See Figure \ref{fig:dark-triangles} for an example of a portion of a trajectory which appears to fill part of the plane, but misses a periodic family of open triangles in the center of upward-pointing triangles.
Theorem \ref{thm:1} is a direct consequence of Theorem \ref{thm:main ergodic} below; see the end of \S \ref{sect:applications} for the proof.

\begin{figure}
\begin{center}
\includegraphics[height=3in]{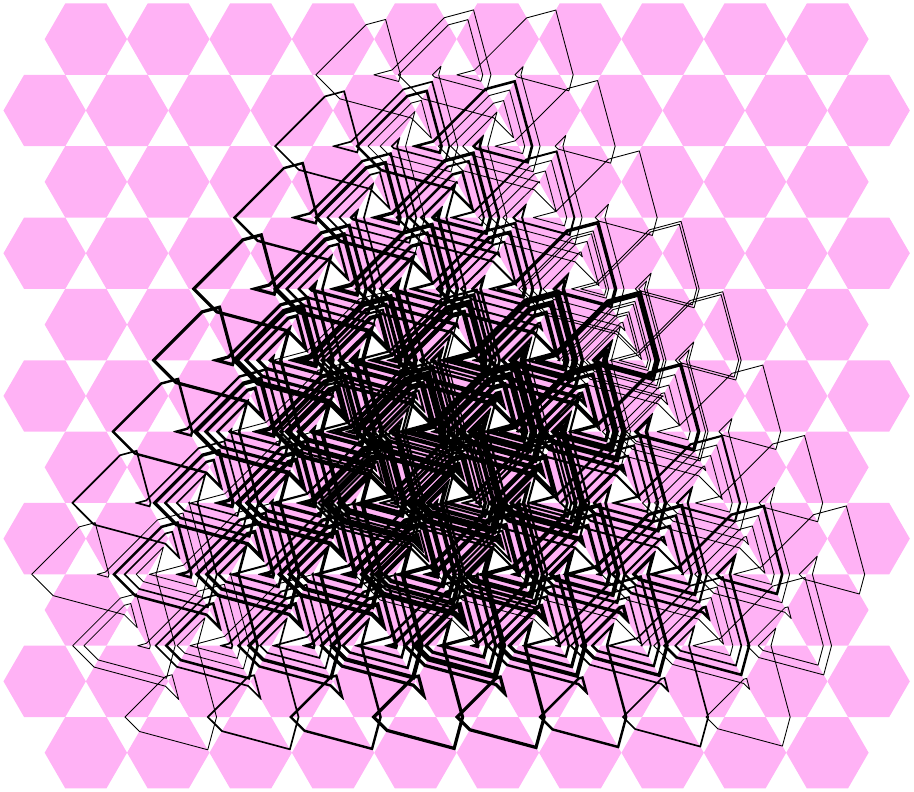}
\hspace{0.5in}
\includegraphics[height=3in]{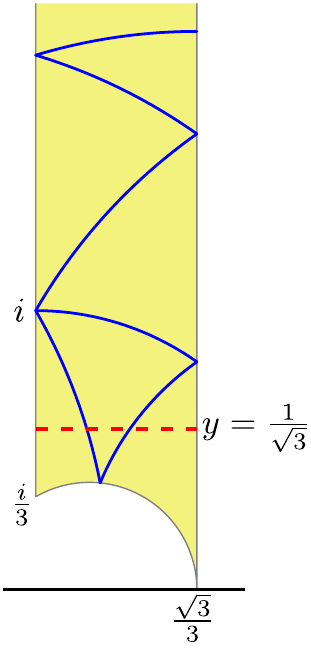}
\caption{Left: A portion of a trajectory initiating at angle $\theta=\frac{\pi}{4}$ from a vertex of a hexagon.
Note the untouched triangle centers. Right: The hyperbolic billiard table $\Delta$ shown in yellow with a blue periodic billiard path starting at $i$ at angle of $\frac{5\pi}{6}$ from the vertical.}
\label{fig:dark-triangles}
\end{center}
\end{figure}

We chose to study the trihexagonal tiling because of the apparent complexity of trajectories
as illustrated in Figure \ref{fig:dark-triangles}. Simpler tilings such as the three edge-to-edge tilings of the plane by regular $n$-gons (for $n \in \{3,4,6\}$) have only periodic and drift-periodic trajectories \cite{2012} \cite{bloch} \cite[Theorem 2.1]{icerm}.
We provide some further context for the study of this system in \S \ref{sect:context}.

By rescaling time, we may assume that light moves at unit speed as measured with respect to the Euclidean metric on the plane.
Then the motion of light defines what we call the {\em refractive} or {\em billiard flow} on the tiling, a unit speed flow $T^t:X \to X$
on the unit tangent bundle $X$ of the plane with singularities at the vertices of the tiling.
Trajectories are not defined through singularities. A trajectory is {\em non-singular} if it is defined for all time.

The behavior of a trajectory is determined to a large extent by the initial direction of travel. To formally state results of this form, we need a few basic observations about the behavior of trajectories. First, a trajectory
initially traveling in direction $\theta$ in in a hexagon can later only be traveling though a hexagon in a direction from the set $\{\theta, \theta+\frac{2\pi}{3}, \theta+\frac{4\pi}{3}\}$ and only be traveling through a triangle in directions from $\{-\theta, \frac{2\pi}{3}-\theta, \frac{4\pi}{3}-\theta\}$.
Here, by {\em direction of travel} we mean the signed angle a tangent vector to the trajectory makes with the horizontal (rightward) vector field.
Second, trajectories through the center of a hexagon hit singularities in both forward and backward time. Thus, a non-singular trajectory in a hexagon misses the center and travels in a counter-clockwise (positive) or clockwise (negative) direction around this center.
This notion of clockwise/counter-clockwise turns out to be flow invariant. This means that we can extend this notion of direction of travel around the center of a hexagon to trajectories within triangles by flowing until we enter a hexagon and then evaluating direction of travel there.
Given the choice of an angle $\theta$ and a sign $s=+$ or $s=-$, we denote by $T_{\theta,s}:X_{\theta,s} \to X_{\theta,s}$ the restriction of $T$ to $X_{\theta,s}$, the set of unit vectors in the plane traveling in directions as listed above
and traveling with sign $s$ around the centers of hexagons.
The domains $X_{\theta,s}$ have natural invariant measure which we call their Lebesgue measures because they arise
from Lebesgue measure on $\R^2$. The observations made above are formally described in \S \ref{sect:applications}.

Above we described the notion of direction as an angle in $\R/2 \pi \Z$, but it is also natural to think of this set of directions as identified with the unit circle
$S^1 \subset \R^2$. We abuse notation by identifying $S^1$ with $\R/2\pi \Z$.

The different kinds of behaviors observed in the systems $T_{\theta,s}:X_{\theta,s} \to X_{\theta,s}$
as we vary $\theta$ are related to dynamics on a triangular billiard table in the hyperbolic plane $\H^2$. This billiard table $\Delta$ is $\H^2$ modulo a $(3,\infty, \infty)$-triangle reflection group. For us $\Delta$ represents the specific table depicted on the right side of Figure \ref{fig:dark-triangles} as a triangle in the upper half-plane model of $\H^2$.

Let $g_t:T_1 \Delta \to T_1 \Delta$ denote the unit speed billiard flow on $\Delta$. The point $i \in \C$ sits in the boundary of $\Delta$ along a vertical wall. For $\theta \in S^1$, let $\vec{u}_\theta \in T^1 \Delta$ be the unit tangent vector which is tangent at $i$ to the geodesic ray initiating at $i$ and terminating at $|\!\cot \theta|$ in the boundary of the upper half-plane.
We define ${\mathcal E_0}$ to be the set of $\theta \in [\frac{\pi}{3}, \frac{2\pi}{3}]$ for which the forward billiard orbit
$\{g_t(\vec{u}_\theta)~:~t\geq 0\}$ has an accumulation point in the portion of $\Delta$ with imaginary part strictly greater than $\frac{1}{\sqrt{3}}$. This is the dashed line in Figure \ref{fig:dark-triangles}. In other words,
\begin{equation}
\label{eq:E0}
{\mathcal E}_0=\left\{\theta \in \left[\frac{\pi}{3}, \frac{2\pi}{3}\right]~:~
\limsup_{t \to +\infty} \Im\big(g_t(\vec{u}_\theta)\big) > \frac{1}{\sqrt{3}}
\quad \text{and} \quad
\liminf_{t \to +\infty} \Im\big(g_t(\vec{u}_\theta)\big) \neq +\infty
\right\},
\end{equation}
where $\Im\big(g_t(\vec{u}_\theta)\big)$ here denotes taking the imaginary part of the basepoint of a unit tangent vector in the upper half-plane. \compat{Changed notation from $\mathfrak{I}$ to $\Im$.}
By classic results about geodesic flow on finite volume hyperbolic surfaces \cite{hopf}, the set ${\mathcal E}_0$ is full measure in $[\frac{\pi}{3}, \frac{2\pi}{3}]$. In fact, the set $[\frac{\pi}{3}, \frac{2\pi}{3}] \smallsetminus {\mathcal E}_0$ has Hausdorff dimension smaller than one;
see the discussion in \cite[Proof of Proposition 6]{hubert weiss}.

We define ${\mathcal E}$ to be the orbit of ${\mathcal E}_0$ in $S^1$ under the rotation group of order six, namely,
$$\textstyle {\mathcal E}={\mathcal E}_0 \cup (\frac{\pi}{3}+{\mathcal E}_0)\cup (\frac{2\pi}{3}+{\mathcal E}_0)\cup (\pi+{\mathcal E}_0)\cup (\frac{4\pi}{3}+{\mathcal E}_0)\cup (\frac{5\pi}{3}+{\mathcal E}_0).$$

\begin{theorem}[Ergodic directions]
\label{thm:main ergodic}
If $\theta \in {\mathcal E}$ then the flows $T_{\theta,+}$ and $T_{\theta,-}$ are ergodic when the domains $X_{\theta,+}$ and $X_{\theta,-}$ are equipped with their natural Lebesgue measures.
\end{theorem}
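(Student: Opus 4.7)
The plan is to reduce the ergodicity of $T_{\theta,\pm}$ to classical recurrence results for the hyperbolic billiard flow on $\Delta$ via a renormalization scheme. The first step is to recast each system $T_{\theta,s}:X_{\theta,s}\to X_{\theta,s}$ as the directional (straight-line) flow on an infinite-area translation surface $M_{\theta,s}$. Concretely, one unfolds the refractive reflection law across each edge of the tiling; because the refraction coefficient is exactly $-1$, the trajectory continues in a reflected copy of the adjacent tile and the resulting piecewise-linear motion assembles into a translation flow on a surface built by gluing reflected copies of triangles and hexagons according to the rule induced by refraction at each edge. The natural Lebesgue measure on $X_{\theta,s}$ matches the area form on $M_{\theta,s}$, and ergodicity of $T_{\theta,s}$ is equivalent to ergodicity of the straight-line flow in a fixed direction on $M_{\theta,s}$.

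Next I would identify a group $V$ of affine self-maps of (some canonical representative in) this family whose derivative image in $\PSL(2,\R)$ is the $(3,\infty,\infty)$-triangle reflection group uniformizing $\Delta$. The order-three rotation about a triangle's center and the order-six rotation of the tiling give the finite-order generators, while the infinite-order generators should arise from affine shears in directions parallel to one of the three families of edges, where the edges organize into strips of tiles along which one can perform a cylinder-like shear on $M_{\theta,s}$. Once the Veech-style identification is in place, the parameter $\vec{u}_\theta \in T^1\Delta$ parametrizes a Teichm\"uller-like deformation of the family $M_{\theta,s}$, and the hyperbolic billiard flow $g_t$ on $\Delta$ becomes the renormalization dynamics on moduli.

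With the renormalization set up, the condition defining $\mathcal{E}_0$ is exactly the non-divergence hypothesis of a Masur-type criterion: the requirement $\liminf_t \Im(g_t \vec{u}_\theta) < +\infty$ excludes escape into the cusp of $\Delta$ at $+i\infty$ (which would correspond to the renormalized surface degenerating in a way that produces a non-trivial invariant set), while $\limsup_t \Im(g_t \vec{u}_\theta) > \tfrac{1}{\sqrt{3}}$ guarantees genuine return of the renormalization orbit to a compact region and hence provides arbitrarily large affine expansions of the flow direction. The symmetry extension from $\mathcal{E}_0$ to $\mathcal{E}$ by the order-six rotation is automatic since those rotations lie in $V$ and simply rotate the direction. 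Invoking an ergodicity criterion for directional flows on infinite translation surfaces in the spirit of the authors' prior work then yields ergodicity of $T_{\theta,\pm}$ for every $\theta \in \mathcal{E}$.

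The main obstacle I anticipate is establishing and legitimately exploiting the renormalization in the infinite-area setting. Classical Masur--Veech theory applies to finite-area translation surfaces and gives unique ergodicity in generic directions, but here the surfaces $M_{\theta,s}$ are non-compact with infinite area, so one cannot directly quote the standard criterion. One must verify both that the affine symmetry group really has the $(3,\infty,\infty)$-group as its projective image and that a suitable ``recurrence of renormalization implies ergodicity'' principle holds in infinite measure — with the specific recurrence threshold $\Im = \tfrac{1}{\sqrt{3}}$ arising geometrically from the combinatorics of the trihexagonal tiling rather than arbitrarily. Once those two pieces are in hand, the ergodicity of $T_{\theta,\pm}$ for $\theta \in \mathcal{E}$ follows from Hopf-type ergodicity of the geodesic flow on the finite-volume hyperbolic orbifold $\Delta$.
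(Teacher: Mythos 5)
Your first half is on target: the paper does unfold the refractive dynamics onto an infinite translation surface and does show the projectivized Veech group is the $(3,\infty,\infty)$-triangle reflection group uniformizing $\Delta$ (one structural difference: the paper builds a single surface $S$, independent of $\theta$ and $s$, realized as a $\Z^2$-cover of a compact thrice-punctured torus $Z^\circ$ made of three rhombi, and the $\theta$-dependence lives in a flow-direction-bilipschitz, transversally measure-preserving orbit equivalence $\x:S\to X_{\theta,+}$; this is what legitimately transfers ergodicity, since the flows are time changes of one another rather than isomorphic). The genuine gap is the step you yourself flag and then wave through: there is no ``recurrence of renormalization implies ergodicity'' principle in infinite measure. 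Recurrence of $g_t(\vec{u}_\theta)$ gives, via Masur's criterion, unique ergodicity of the directional flow only on the \emph{compact} base $Z^\circ$; it says nothing about the $\Z^2$-cover, and indeed there are periodic covers on which the straight-line flow is recurrent yet non-ergodic in almost every direction \cite{fu2}. What the paper actually does is exhibit $F_\theta$ on $S$ as a $\Z^2$-valued skew product over $F_\theta$ on $Z^\circ$ and prove that the deck elements $-2\v_0$, $-2\v_1$ are \emph{essential values} of the cocycle via the Hubert--Weiss criterion \cite{hubert weiss}: one needs infinitely many maximal cylinders $C$ on $Z^\circ$ with areas bounded below, lifting to strips with the prescribed deck element and satisfying $|\hol~C|\cdot|\u_\theta\wedge \hol(C)|\leq(1-\epsilon)\Area(C)$; Schmidt's theory of essential values then upgrades ergodicity of the base to ergodicity on the cover. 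Nothing in your proposal produces essential values, and without them the conclusion does not follow.

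Relatedly, you have the geometric roles of the two conditions defining ${\mathcal E}_0$ inverted. It is $\liminf_{t\to+\infty}\Im\big(g_t(\vec{u}_\theta)\big)\neq+\infty$ that is the non-divergence hypothesis feeding Masur's criterion for the base flow. The condition $\limsup_{t\to+\infty}\Im\big(g_t(\vec{u}_\theta)\big)>\frac{1}{\sqrt{3}}$ is \emph{not} a return to a compact region: the set $\{\Im z>\frac{1}{\sqrt{3}}\}$ is precisely the horodisk $H\big((1,0),\sqrt[4]{3}\big)$, a neighborhood of the cusp of $\Delta$ at $\infty$, and it is excursions of the geodesic \emph{into} this cusp neighborhood that generate, via the transitive action of the Veech group on strip directions, the infinitely many well-approximating cylinders needed for the essential-value argument. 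That is also where the threshold $\frac{1}{\sqrt{3}}$ comes from: the three horizontal cylinders of $Z^\circ$ have $\hol(C_i)=(1,0)$ and $\Area(C_i)=\frac{\sqrt{3}}{2}$, and the well-approximation condition with $d=\Area(C)$ translates into accumulation in ${\mathcal C}\big(\hol~C,\sqrt{2d}\big)$ -- not into any combinatorial feature of the tiling directly. Finally, your closing appeal to Hopf ergodicity of the geodesic flow is a non sequitur for a fixed $\theta$: in the paper Hopf's theorem is used only to show that ${\mathcal E}_0$ has full measure in $[\frac{\pi}{3},\frac{2\pi}{3}]$, never to prove ergodicity of any particular $T_{\theta,s}$.
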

By remarks above this implies that the set of non-ergodic directions has Hausdorff dimension less than $1$.

\begin{example}[$\theta=\frac{\pi}{4}$]
The angle $\theta=\frac{\pi}{4}$ lies in ${\mathcal E}$.
To see this observe that $\theta'=\theta+\frac{\pi}{3}=\frac{7 \pi}{12} \in [\frac{\pi}{3},\frac{2\pi}{3}]$.
By definition $\vec{u}_{\theta'}$ is the unit tangent vector based at $i$ pointed into $\Delta$ at angle $\frac{5 \pi}{6}$ from the vertical. The billiard trajectory is periodic and is depicted in Figure \ref{fig:dark-triangles}. This billiard trajectory repeatedly travels above the line where $y=\frac{1}{\sqrt{3}}$ and so by the Theorem above the flows $T_{\theta,+}$ and $T_{\theta,-}$ are ergodic. A trajectory of $T_{\theta,+}$ is shown on the left side of Figure \ref{fig:dark-triangles}. We do not know if this trajectory equidistributes.
\end{example}

Now we will consider what happens when $\theta \not \in {\mathcal E}$. A special collection of such directions are those parallel to a vector in the {\em Eisenstein lattice}, the subgroup $\Lambda \subset \R^2$ redundantly generated by
\begin{equation}
\label{eq:e}
\textstyle \e_0=(1,0), \qquad
\e_1=(-\frac{1}{2},\frac{\sqrt{3}}{2}) \qquad \text{and} \qquad
\e_2=(-\frac{1}{2},-\frac{\sqrt{3}}{2}).
\end{equation}
To explain the dynamics in these directions we need some definitions.

\begin{figure}
\begin{center}
\includegraphics[height=2.5in]{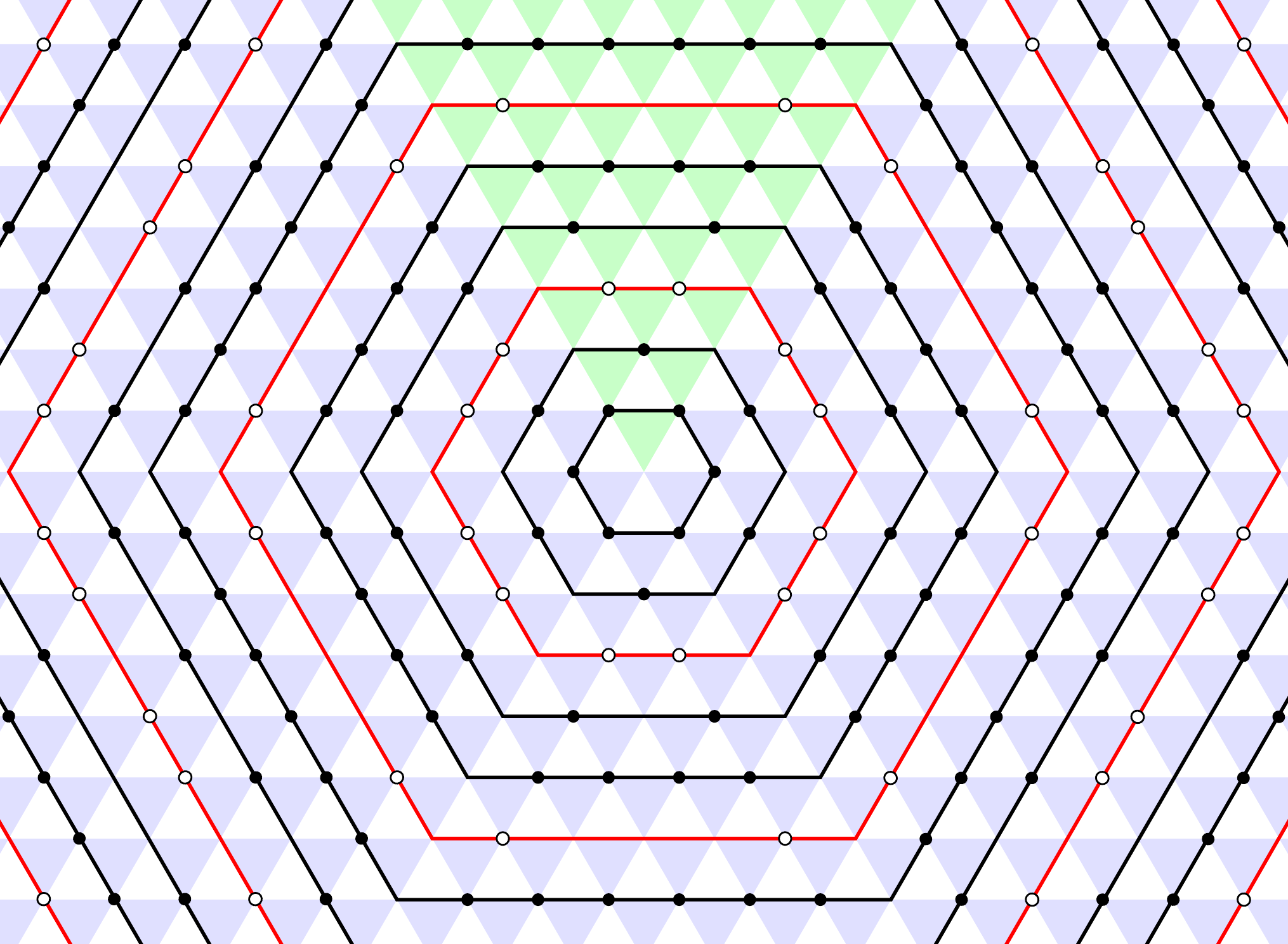}
\caption{A portion of the plane showing vectors associated to periodic (solid) and drift-periodic (open) directions. Lattice points without markers are not visible from the origin. The green region contains
vectors in $\Lambda$ with directions in $[\frac{\pi}{3},\frac{2\pi}{3}]$.}
\label{fig:concentric-hexagons}
\end{center}
\end{figure}

\begin{definition}
\label{def:Lambda vis}
We define $\Lambda_\vis \subset \Lambda$ to consist of those lattice points \emph{visible} from the origin, in the sense that they are not blocked by any other point of the lattice. Formally,
\begin{equation}
\label{eq:Lambda vis}
\Lambda_\vis = \Big\{\w \in \Lambda:~
\text{$c \w \not \in \Lambda$ for all $c \in \R$ with $0<c<1$}\}.
\end{equation}
These points are indicated by closed and open disks in Figure \ref{fig:concentric-hexagons}.
\end{definition}

\begin{definition}
For $\v\in\R^2$, let $\| \v \|_{\scriptsize\hexagon} = \text{min}\{|a|+|b|+|c| : \v = a\v_0+b\v_1+c\v_2\}$.
\end{definition}

This norm takes integer values on $\Lambda$: $\| \v \|_{\scriptsize\hexagon}=n$ when $\v$ is in the $n$th concentric hexagon shown in Figure \ref{fig:concentric-hexagons}.

We define two subsets of the circle:
\begin{align*}
{\mathcal D} & = \left\{\frac{\w}{|\w|} \in S^1:~
\text{$\w \in  \Lambda_\vis$ \quad and \quad $\| \w \|_{\scriptsize\hexagon} \equiv 0 \pmod{3}$}\right\}, \quad \text{and} \\
{\mathcal P} & = \left\{\frac{\w}{|\w|} \in S^1:~
\text{$\w \in  \Lambda_\vis$ \quad and \quad $\| \w \|_{\scriptsize\hexagon} \not \equiv 0 \pmod{3}$}\right\}. &
\end{align*}

\begin{theorem}[Lattice directions]
\label{thm:lattice}~
\begin{enumerate}
\item If $\theta \in {\mathcal P}$ then $T_{\theta,+}$ and $T_{\theta,-}$ are {\em completely periodic}, i.e., every non-singular trajectory is periodic. Conversely, every periodic trajectory is contained in $X_{\theta,+}$ or $X_{\theta,-}$ for some $\theta \in {\mathcal P}$.
\item If $\theta \in {\mathcal D}$ then $T_{\theta,+}$ and $T_{\theta,-}$ are {\em completely drift-periodic}, i.e., every non-singular trajectory is invariant under a non-trivial translational symmetry of the tiling. Conversely, every drift-periodic trajectory is contained in $X_{\theta,+}$ or $X_{\theta,-}$ for some $\theta \in {\mathcal D}$.
\end{enumerate}
\end{theorem}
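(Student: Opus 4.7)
The plan is to realize $T_{\theta, s}$ as a directional flow on a compact translation surface $Y$ and to derive the full dichotomy from a cylinder decomposition of $Y$ plus a holonomy computation. The six possible directions of travel $\pm\theta + 2k\pi/3$ ($k = 0, 1, 2$) suggest taking six copies of a fundamental domain for the tiling's translation-symmetry group, one copy pre-rotated for each direction of travel, with tile edges glued by the refraction rule; the tangential-flip at each edge straightens the flow once all six rotated copies are accounted for. This produces a compact translation surface $Y$ whose directional flow in direction $\theta$ corresponds, after unwinding the rotations, to $T_{\theta, s}$ on the plane modulo the tiling's translation-symmetry group.

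For $\theta = \arg \w$ with $\w \in \Lambda_\vis$, the straight line from $\mathbf 0$ through $\w$ unfolds to a saddle connection on $Y$ in direction $\theta$, so standard translation-surface theory yields a decomposition of $Y$ in direction $\theta$ into finitely many cylinders of closed orbits. Every non-singular orbit of $T_{\theta, s}$ is then periodic on $Y$, and lifting to the plane gives either a closed loop (periodic) or an orbit invariant under some nonzero element $\hol(C)$ of the tiling's translation group (drift-periodic). For each cylinder $C$, I would compute $\hol(C)$ by following $C$'s core curve through its visited tiles and summing the per-tile displacement vectors; by symmetry this yields $\hol(C)$ as an explicit rational multiple of $\w$, and the arithmetic identity to establish is that this multiple vanishes exactly when $\| \w \|_{\scriptsize\hexagon} \not\equiv 0 \pmod{3}$. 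The converse in each part follows from the same setup: a periodic or drift-periodic orbit has lattice-valued displacement over one (drift-)period that is a positive combination of vectors in the directions $\{\pm\theta + 2k\pi/3\}$, and a linear-algebra argument together with the combinatorial rigidity of tile visits forces $\theta$ itself to be a lattice direction, after which the holonomy computation above places it in $\mathcal P$ or $\mathcal D$.

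I expect the main obstacle to be the mod-$3$ identity in the holonomy computation. It is not built into the translation-surface construction in any obvious way and presumably reflects a $\Z/3$-valued topological invariant of cylinders on $Y$ --- perhaps an intersection number, modulo $3$, with a distinguished cycle counting how often the core curve of $C$ wraps through a chosen family of hexagon centers, or equivalently an invariant extracted from the three-coloring of faces of the trihexagonal tiling. Isolating this invariant intrinsically, rather than checking it through case analysis on the representation $\w = a\e_0 + b\e_1 + c\e_2$, is where I would expect most of the work to go.
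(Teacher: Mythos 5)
Your overall architecture is in fact the paper's architecture: quotient the plane dynamics by the tiling's translation group $2\Lambda$ to get a compact translation surface (the paper reaches its $Z^\circ$, three rhombi, via the rhombus decomposition and an orbit equivalence rather than your six-fold unfolding, but this is cosmetic), get cylinder decompositions in lattice directions, and decide periodic versus drift-periodic by the $2\Lambda$-valued holonomy of core curves. The fatal problem is your central computational claim that ``by symmetry'' $\hol(C)$ is a rational multiple of $\w$, vanishing according to the residue of $\| \w \|_{\scriptsize\hexagon}$ modulo $3$. This is false. By Proposition \ref{prop:monodromy of strips} and Corollary \ref{cor:drift-periodic-shift}, in \emph{every} drift-periodic direction the compact quotient decomposes into three cylinders whose deck-group holonomies are $\pm 2\v_0$, $\pm 2\v_1$ and $\pm 2\v_2$: drift-periodic trajectories drift along the three edge directions of the tiling, never (generically) along $\w$, and for a single $\theta \in {\mathcal D}$ different trajectories drift in three pairwise non-parallel directions, so no single scalar multiple of $\w$ can describe the holonomies. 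This error is structural, not just numerical: with your ansatz the dichotomy would reduce to one scalar vanishing or not, whereas in reality you must separately exclude ``mixed'' lattice directions in which some cylinders lift closed and others lift to strips. The paper gets this uniformity and the mod-$3$ law simultaneously, with no per-direction computation, by (i) computing the monodromy homomorphism as an intersection pairing against two distinguished relative classes $\eta_0, \eta_1$ (Proposition \ref{prop:monodromy}) --- so your closing guess about an intersection number with a distinguished cycle is the right instinct --- and (ii) showing the Veech group is a $(3,\infty,\infty)$ triangle group acting transitively on the two classes of visible lattice vectors $m\v_1+n\v_2$ with $m \equiv n$ or $m \not\equiv n \pmod 3$ (Theorem \ref{thm:veech group}; the two classes are the two cusps), whose elements lift to the cover (Lemma \ref{lem:lifting}), transporting two base computations (horizontal direction: three strips; direction $\v_1$: a closed lifted cylinder). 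You explicitly flag the mod-$3$ identity as the unresolved obstacle, and as it stands your proposal contains no mechanism to prove it; so neither half of the dichotomy is actually established.

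A secondary gap: ``a saddle connection in direction $\theta$, so standard translation-surface theory yields a cylinder decomposition'' is not a valid implication on a general translation surface, and the straight line from $\0$ through $\w$ in the plane is not a trajectory in any case --- directions in the plane and on the unfolded surface agree only tile by tile, since the physical trajectory refracts while the surface trajectory runs straight (the same subtlety the paper must manage for $S$). What rescues complete periodicity in lattice directions is that your six-copy surface is a finite branched translation cover of the torus $\R^2/2\Lambda$ (each pre-rotated copy is still a fundamental domain, as $2\Lambda$ is invariant under the order-six point group), and on such covers every direction parallel to $\Lambda$ is completely periodic; this covering also repairs your converse, exactly as in the paper's use of its torus $Y$ (Figure \ref{fig:tori}): a periodic or drift-periodic plane trajectory (the latter invariant under an element of $2\Lambda$, the full translation group of the tiling) descends to a closed orbit on the cover, hence to a closed torus geodesic, forcing $\theta$ parallel to $\Lambda$. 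So your reduction and converse are repairable; the unproved core is the mod-$3$ dichotomy, for which the intersection-pairing monodromy plus Veech-group transitivity, or some genuine substitute, must still be supplied.
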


We also show that periodic trajectories are preserved by order three rotation symmetries of the tiling (Corollary \ref{cor:periodic-order-three}),
and drift-periodic trajectories are invariant under one of the six non-trivial translations of the tiling that minimize translation distance (Corollary \ref{cor:drift-periodic-shift}).

\begin{remark}
We have $\theta \in {\mathcal P} \cup {\mathcal D}$ and
$\theta \in \left[\frac{\pi}{3}, \frac{2\pi}{3}\right]$
if and only if the billiard trajectory $g_t(\vec{u}_\theta)$ limits on one of the two ideal vertices of $\Delta$. Furthermore, if the trajectory limits on the ideal vertex at $\infty$ then $\theta \in {\mathcal D}$,
and if it limits on the ideal vertex at $\frac{\sqrt{3}}{3}$ then $\theta \in {\mathcal P}$. This together with order six rotational invariance determines the sets
${\mathcal P}$ and  ${\mathcal D}$.
\end{remark}

It is natural to ask what can be said about all trajectories since we have not covered all directions,
and ergodicity only says something about almost every trajectory in a direction. To this end we show:

\begin{theorem}
\label{thm:bounded implies periodic}
\begin{enumerate}
\item[(a)] All non-singular bounded trajectories of $T$ are periodic.
\item[(b)] If $x \in X$ has a non-singular trajectory under $T$, then the linear drift rate
$\lim_{t \to +\infty} \frac{|T^t(x)|}{t}$ is zero unless $x$ has a drift-periodic orbit (in which case this rate converges to a positive constant). Here $|T^t(x)|$ denotes the distance from the unit tangent vector of the basepoint of $T^t(x) \in X$ to the origin.
\end{enumerate}
\end{theorem}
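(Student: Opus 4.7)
The plan is to analyze non-singular orbits according to the class of their direction $\theta$: lattice (in ${\mathcal P}\cup{\mathcal D}$), ergodic (in ${\mathcal E}$), or exceptional (non-lattice and non-ergodic, a set of Hausdorff dimension less than one). The observations recalled before Theorem~\ref{thm:main ergodic} ensure that the direction of a non-singular trajectory is well-defined modulo the finite symmetry group generated by $\theta\mapsto\theta+\tfrac{2\pi}{3}$ and $\theta\mapsto-\theta$, so this classification descends to a classification of non-singular orbits.

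For part (a), suppose $x$ has a non-singular bounded orbit with direction $\theta$. If $\theta\in{\mathcal P}$, Theorem~\ref{thm:lattice}(1) gives that the orbit is periodic. If $\theta\in{\mathcal D}$, every non-singular orbit is drift-periodic hence unbounded, contradicting the hypothesis. If $\theta\in{\mathcal E}$, Theorem~\ref{thm:main ergodic} gives ergodicity of $T_{\theta,\pm}$ on the infinite-measure space $X_{\theta,\pm}$; the closure of a bounded orbit would be a proper compact invariant set, which on passing to the finite-measure quotient $X_{\theta,\pm}/\Gamma$ (with $\Gamma$ the translation lattice of the tiling) would yield a proper closed invariant set of the quotient flow $\bar T_{\theta,\pm}$. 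Ruling this out is the key step, which I expect to handle by upgrading ergodicity to topological minimality of $\bar T_{\theta,\pm}$. The exceptional-direction case is handled analogously, exploiting recurrence of $g_t(\vec{u}_\theta)$ in the compact part of $\Delta$ to obtain a similar rigidity conclusion.

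For part (b), a drift-periodic orbit clearly has positive drift rate $|\v|/T_0$, where $\v$ is the period vector and $T_0$ the period, so the content is that non-singular, non-drift-periodic orbits have zero drift rate (and that the limit exists). I would use a cocycle argument. Consider the $\Gamma$-valued cocycle $c(x,t)$ recording the translation-lattice component of $T^t(x)-x$; this descends to the finite-measure quotient flow $\bar T_{\theta,\pm}$ on $X_{\theta,\pm}/\Gamma$. Rotation by $\tfrac{2\pi}{3}$ about any hexagon center is a symmetry of the tiling preserving $X_{\theta,\pm}$ (since it cyclically permutes the six allowed directions), and it rotates $c(x,1)$ by $\tfrac{2\pi}{3}$ in $\R^2$. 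Hence the average $\int c(x,1)\,d\mu$ is fixed by a nontrivial rotation and therefore vanishes. Assuming ergodicity of $\bar T_{\theta,\pm}$ for $\theta\in{\mathcal E}$, Birkhoff's theorem then gives $|T^t(x)|/t\to 0$ almost everywhere.

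The main obstacle in both parts is upgrading these almost-everywhere conclusions to statements valid for every non-singular orbit. Ergodicity on the infinite-measure cover makes bounded orbits and linear-drift orbits each measure-zero, but does not by itself eliminate individual such orbits. I expect the authors to supplement ergodicity with a Veech-type minimality or unique ergodicity statement for $\bar T_{\theta,\pm}$, forcing every non-singular trajectory in an ergodic direction to equidistribute. Such a statement should follow from the renormalization structure provided by the hyperbolic billiard flow on $\Delta$ used in the proof of Theorem~\ref{thm:main ergodic}, which is also the tool needed to handle the exceptional directions where ergodicity is unknown.
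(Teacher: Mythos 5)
Your plan for part (a) has a genuine gap exactly where you flag it, and the fix you propose does not close it. Ergodicity, minimality, or even unique ergodicity of the quotient flow on $X_{\theta,\pm}/2\Lambda$ (which, via Theorem \ref{thm:orbit equivalence}, is the straight-line flow on the compact lattice surface $Z^\circ$, and is indeed uniquely ergodic in every non-lattice direction by Veech dichotomy) cannot rule out a bounded orbit on the infinite cover: the closure of a bounded orbit upstairs projects to a compact invariant set downstairs, and minimality only forces that projection to be everything --- no contradiction, since the orbit could equidistribute in the quotient while its $2\Lambda$-displacement cocycle has bounded Birkhoff sums (a zero-mean cocycle over an irrational rotation can be a coboundary). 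So ``upgrading ergodicity to minimality of $\bar T_{\theta,\pm}$'' is not a key step one can expect to carry out; it is false as a mechanism for excluding bounded orbits. The paper's proof of (a) uses none of this machinery and needs no classification of directions: a bounded non-singular trajectory on $S$ meets only finitely many rhombi, so one completes those rhombi (with arbitrary compatible gluings on the unused edges) to a \emph{compact} translation surface $S'$ containing the trajectory, and applies the Fox--Kershner dichotomy: a non-periodic non-singular orbit is dense either in $S'$ or in a subsurface bounded by saddle connections parallel to the orbit. Since $S'$ is a finite branched cover of $\R^2/\Lambda$, all saddle connections are $\Lambda$-parallel and all $\Lambda$-directions are completely periodic on $S'$, so the second alternative collapses and the orbit is periodic. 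This argument is uniform in the direction, so it also covers the exceptional directions, which your approach leaves genuinely open (recurrence of $g_t(\vec{u}_\theta)$ below the line $y=\frac{1}{\sqrt{3}}$ gives you no rigidity statement of the kind you need).

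Part (b) has the same every-orbit defect: your symmetry argument correctly shows the displacement cocycle has zero mean (this is a nice alternative to the paper's direct computation), but Birkhoff's theorem over the ergodic flow yields zero drift only almost everywhere, and only for $\theta\in\mathcal{E}$, while the theorem asserts it for \emph{every} non-singular orbit in every non-lattice direction. The paper closes this by renormalizing differently: after disposing of lattice directions (where all orbits on $S$ are periodic or drift-periodic because $S$ covers the torus $Y$), it uses the cylinder decomposition $\{C_{\mathbf{c}}\}$ in the direction of one of the Eisenstein generators, crossed in constant time, to reduce the drift rate to $\lim_n |\mathbf{c}_n|/n$; the centers $\mathbf{c}_n$ are read off via Proposition \ref{prop:monodromy} as Birkhoff sums, over the return map to the negatively sloped rhombus boundaries of $Z$, of two step functions of zero integral. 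That return map is an \emph{irrational rotation}, for which every orbit equidistributes (unique ergodicity plus Riemann-integrability of interval indicators), so the averages vanish for every non-singular starting point --- precisely the per-orbit strengthening your proposal lacks. In short: your decomposition by direction type and your appeal to ergodicity are the wrong tools for both parts; the paper's proofs substitute (a) a compactification-plus-dichotomy argument and (b) a uniquely ergodic circle-rotation factor, each of which delivers conclusions orbit-by-orbit rather than almost everywhere.
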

Statement (a) of this theorem is proved at the end of \S \ref{sect: to a translation surface}.

In addition, we remark that in a set of directions of Hausdorff dimension more than $\frac{1}{2}$, the locally-finite ergodic invariant measures for $T_{\theta,+}$ and $T_{\theta,-}$ are classified: they are Maharam measures and are in bijection with group homomorphisms $\Z^2 \to \R_\times$. This follows from work in \cite{hooper}. 

Unfortunately, there are directions for which none of the results mentioned here apply. For example, if $\theta$ is parallel to $(\sqrt{2},3)$, then the 
trajectory $g_t(\vec{u}_\theta)$ is asymptotic to a periodic billiard trajectory below the line $y=\frac{1}{\sqrt{3}}$.

\begin{question}
Is it true that if $\theta$ is not parallel to a vector in the Eisenstein lattice, then the Lebesgue measure is ergodic for each of the flows $T_{\theta,+}$ and $T_{\theta,-}$?
\end{question}

Assuming an affirmative answer to this question, the system exhibits behavior very much analogous to the straight-line flow on a compact translation surface with the {\em lattice property}. By definition such a surface is stabilized by a lattice $\Gamma \subset \PGL(2,\R)$ acting by deformations of the translation surface structure.
In this setting, {\it Veech dichotomy} guarantees that the straight-line flow in any fixed direction on such a surface
is either uniquely ergodic or completely periodic with the later case corresponding to directions in which geodesics in $\H^2/\Gamma$ exit a cusp \cite[Theorem 8.2]{veech}.

In fact, lattice surfaces are essential to our proofs. From the tiling, we construct a translation surface $S$ that is a infinite cover of a torus; see \S \ref{sect: to a translation surface}. The flows
$T_{\theta,+}$ and $T_{\theta,-}$ are orbit equivalent to straight-line flows in some direction on $S$ (Theorem \ref{thm:orbit equivalence}).
This infinite translation surface has the lattice property (Proposition \ref{prop:generators}) and indeed the associated lattice in $\PGL(2,\R)$ is the triangle group obtained by reflections in the sides of our triangle $\Delta \subset \H^2$. We use the orbit equivalence and the symmetries provided by the Veech group to deduce Theorem \ref{thm:lattice}; see \S \ref{periodic directions}. The orbit-equivalence reduces the statement of Theorem \ref{thm:main ergodic} to a statement about ergodicity of straight-line flows on $S$. To verify ergodicity here we use a criterion due to Hubert and Weiss \cite{hubert weiss} developed into a context closer to ours by Artigiani \cite{artigiani} which provides a criterion for ergodicity of the straight-line flow on $S$. In \S \ref{sect:hubert weiss}, we offer an improvement to the constants in their argument and spell out a geometric description of the directions shown to be ergodic.
(The improvement of constants enabled us to decrease the value in \eqref{eq:E0} to $\frac{1}{\sqrt{3}}$ from $\frac{2}{\sqrt{3}}$. Our geometric description shows that Hubert and Weiss' notion of a direction being well-approximated by strips is equivalent in the lattice case to the corresponding geodesic in the Teichm\"uller curve having an accumulation point in an explicit finite union of cusp neighborhoods.)
We apply these methods in \S \ref{sect:ergodicity} where we prove Theorem \ref{thm:main ergodic}. Ergodicity of almost every direction on $S$ also follows from work of Ralston and Troubetzkoy \cite{ralston2} whose approach to these problems is similar to that of Hubert and Weiss.

Up to an affine change of coordinates the surface $S$ is square tiled: $S$ is an infinite cover regular of a flat torus branched at one point. This means that the straight-line flow on $S$ can be understood as a lift of the straight-line flow on a flat torus. From the above paragraph, this means that there is a section of each flow $T_{\theta,s}$ so that the return map to the section is a skew-product extension of an irrational rotation. When $\theta \in {\mathcal P} \cup {\mathcal D}$ the base dynamics are given by a rational rotation and otherwise the base dynamics are given by an irrational rotation. This is why trajectories on this tiling are unstable under a small change of direction, as noted  in \cite[\S 6]{icerm}. It is worth pointing out that in the context of straight-line flows on such infinite covers of tori, sometimes ergodicity is prevalent as here (e.g.  \cite{hooper hubert weiss}, \cite{hubert weiss} and  \cite{ralston2}) and in contrast sometimes ergodicity is atypical \cite{fu2}. It is not yet completely understood which infinite covers of a square torus exhibit ergodicity in almost every direction.

\section*{Acknowledgments}

Sergei Tabachnikov introduced us to tiling billiards in the context of Summer@ICERM, where two groups of students were the first to work on it \cite{icerm,2012}. ICERM  in $2012$  and Williams College in $2016$ provided excellent working environments.

We became interested in the trihexagonal tiling and observed phenomena described above by experimenting with the using computer programs written by the second author and Alex St Laurent. The programs model tiling billiards.
The program of St Laurent is publicly available \cite{st laurent} and can be run in a modern browser. Many
figures are generated using Sage (\url{http://www.sagemath.org/}) using the open-source FlatSurf package \cite{flatsurf} written by the second author and Vincent Delecroix. (Additional contributors to FlatSurf are welcome.)

We are grateful to Barak Weiss for helpful conversations related to the ergodicity criterion in \cite{hubert weiss}. Collaboration between the second author and Weiss was supported by BSF Grant 2016256.

Northwestern University provided travel funds for our continued collaboration. Contributions of the second author are based upon work supported by the National Science Foundation under Grant Number DMS-1500965 as well as a PSC-CUNY Award (funded
by The Professional Staff Congress and The City University of New
York).

\subsection{Context and questions}
\label{sect:context}

\subsubsection{Tiling billiards}
The connection between metamaterials with a negative index of refraction and the problem on planar tilings was made by Mascarenhas and Fluegel \cite{bloch}. Davis, DiPietro, Rustad and St Laurent named the system \emph{tiling billiards} and explored several special cases of the system, including triangle tilings and the trihexagonal tiling \cite{icerm}. They found examples of periodic trajectories in the trihexagonal tiling, constructed families of drift-periodic trajectories, and conjectured that dense trajectories and non-periodic escaping trajectories exist  (\cite{icerm}, Conjectures 5.12-5.13).


Concurrently with our work on the trihexagonal tiling, the first author with Baird-Smith, Fromm and Iyer in \cite{small} studied tiling billiards on triangle tilings, showing that trajectories on these tilings can be described by interval and polygon exchange transformations, and resolving additional conjectures from \cite{icerm}. These systems are quite different: if a trajectory visits a single tile twice then the trajectory is periodic.

One can interpolate between the trihexagonal tiling and the tiling by equilateral triangles by simultaneously shrinking the edges of downward-pointing triangles until the downward triangles disappear. See Figure \ref{fig:deformed tiling}.

\afterpage{
\begin{figure}
\begin{center}
\includegraphics[width=0.95\textwidth]{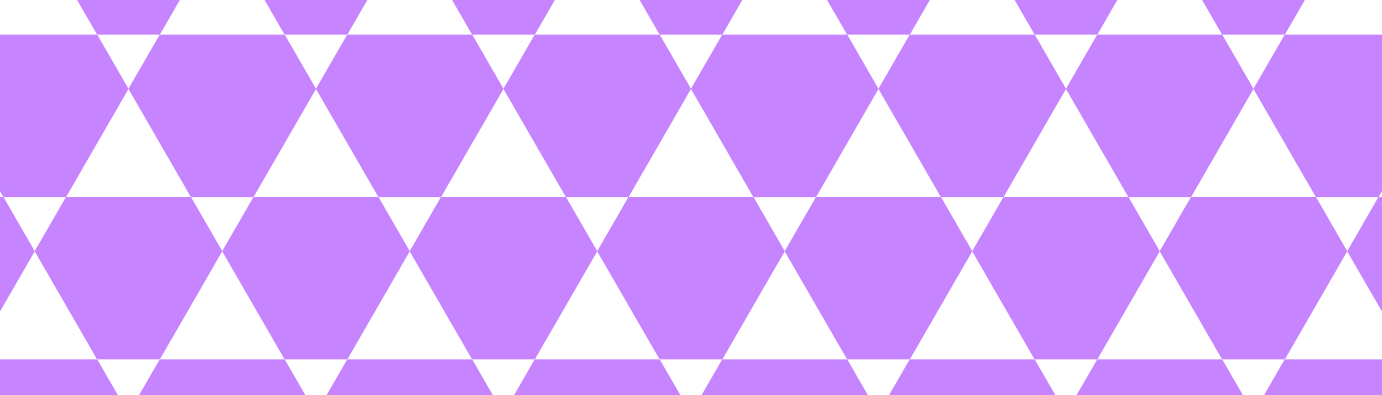}
\caption{A deformed trihexagonal tiling where the edges of downward pointing triangles have half the length of edges of upward pointing triangles.}
\label{fig:deformed tiling}
\end{center}
\end{figure}}

\begin{question}
How does the tiling dynamics change as we vary ratio of lengths of edges of the upward and downward pointing triangles?
\end{question}

More generally, we wish to understand the dynamics of trajectories on tilings, e.g.:

\begin{question} What feature of a tiling makes it possible (as here) or impossible (as in triangle tilings) to have dense regions in tiling billiard trajectories in the plane?
\end{question}

\subsubsection{Periodic billiard tables and related systems}

Others have studied various billiard systems on a periodically-tiled plane. In the {\it wind-tree model}, there are axis-parallel rectangular obstacles (trees) at lattice points, with a billiard flow (wind) in the plane outside of the obstacles. Delecroix, Hubert and Leli\`evre showed that no matter the size of the obstacles, for almost every direction the polynomial diffusion rate is $2/3$ \cite{dhl}. Subsequently Delecroix and Zorich determined the diffusion rates for other periodic families of objects with axis-parallel edges, such as the wind-tree model with a periodic set of obstacles removed, or obstacles with a more complicated shape \cite{dz}. Other work on the wind-tree model is in \cite{ah,delecroix,hlt}, and on other periodic billiard tables in \cite{fu1,fu2}.

Other billiard systems are also motivated by optics.
Fr\k{a}czek and Schmoll \cite{fs}, Fr\k{a}czek, Shi and Ulcigrai \cite{fsu}, and Artigiani \cite{artigiani} studied the plane with periodic optical obstacles called {\it Eaton lenses}, which act as a perfect optical retro-flector: when a light ray enters, it exits parallel but traveling in the opposite direction.
In each paper, the authors replaced the spherical lenses with slits in the plane, and constructed a related translation surface. Typically the associated flows are non-ergodic \cite{fs} but Artigiani \cite{artigiani} demonstrated that many configurations lead to ergodic flows.

\subsection{Outline of paper}
\begin{itemize}
\item In $\S$\ref{basic-definitions}, we introduce the tiling billiards system, describe the \emph{folding technique}, and give several fundamental results specific to the trihexagonal tiling.
\item In $\S$ \ref{sect: to a translation surface}, we define a translation surface $S$ from the tiling, and state and prove the orbit-equivalence result.
\item In $\S$ \ref{hidden symmetries}, we give specific results about $S$. In particular, we find the Veech (Affine symmetry) group of the surface.
\item In $\S$ \ref{periodic directions}, we use the symmetries of $S$ to investigate periodic and drift-periodic directions on $S$. We use the orbit-equivalence to describe the periodic and drift-periodic directions for the tiling flow.
\item In \S \ref{sect:hubert weiss}, we revisit the well-approximation by strips criterion for ergodicity due to Hubert and Weiss.
\item In $\S$ \ref{sect:ergodicity}, we prove the ergodicity of almost every aperiodic direction.
\end{itemize}

\section{Definitions and basic observations}\label{basic-definitions}
\label{sect:definitions}

\subsection{The billiard flow on a tiling}
Consider a tiling ${\mathcal T}$ of the plane by regions with piecewise $C^1$ boundaries. For concreteness denote these regions by $R_i$. We consider a ray in some region $R_1$ in ${\mathcal T}$, which shares a boundary with some region $R_2$. When the ray intersects the boundary between $R_1$ and $R_2$, it is reflected across the tangent line to the boundary curve at the point of intersection. (If the regions are polygonal, as they are here, the ray is reflected across the edge itself.) We can extend this new ray to a line, and continue along this line in the traveling away from the intersection point.
The {\it billiard flow} on ${\mathcal T}$ is the flow defined by refracting in this way all trajectories across the boundaries they hit. See Figure \ref{fig:trajectory}.
In the case that the tiling can be $2$-colored, this agrees with the flow of light when the tiles of each color are composed with transparent materials with equal but opposite indices of refraction and we also call the billiard flow the {\em refractive flow}.

\subsection{The folding construction}
Because the billiard flow reflects a trajectory across each edge of the tiling, we can use {\it folding} to significantly simplify our analysis, as follows. When a trajectory crosses an edge of the tiling (the left side of Figure \ref{fig:folding}), we fold the tiling across that edge, the dotted line in Figure \ref{fig:folding}. When the trajectory crosses the next edge, we fold across that edge as well (the middle of Figure \ref{fig:folding}). The result is that the trajectory on the folded tiling is always along a single line, alternating at each edge crossing between forward and backward (the right side of Figure \ref{fig:folding}).

\begin{figure}
\begin{center}
\includegraphics[width=5in]{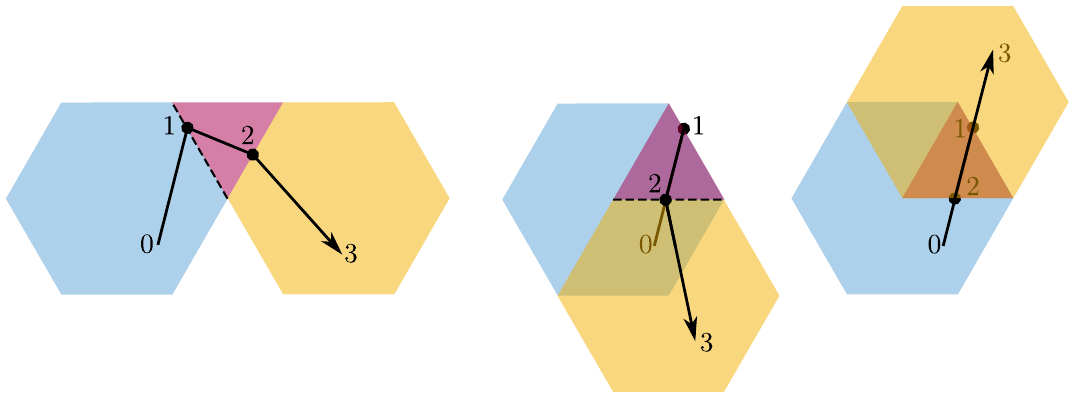}
\caption{When we fold the tiling along each edge that the trajectory crosses, the folded trajectory goes back and forth along a single line.}
\label{fig:folding}
\end{center}
\end{figure}

\subsection{Applications to the trihexagonal tiling}
\label{sect:applications}
\begin{lemma}\label{lem:three-angles}
In the trihexagonal tiling, a billiard trajectory initially traveling in direction $\theta$ in a hexagon (resp. triangle)
is traveling in a direction in the set $\{\theta, \theta+2\pi/3, \theta+4\pi/3\}$ whenever it returns to a hexagon (resp. triangle).
\end{lemma}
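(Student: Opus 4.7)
The plan is to exploit the fact that the edges of the trihexagonal tiling fall into exactly three parallel classes and that refraction across an edge acts on directions as reflection across that edge's orientation line. First I would record the setup: choose coordinates so that the three parallel classes of edges are oriented at angles $0$, $\pi/3$, and $2\pi/3$ modulo $\pi$. Reflection across an edge in the class at angle $\phi$ sends a direction $\theta$ to $2\phi - \theta$, so after one refraction the direction lies in the set $\{-\theta, \frac{2\pi}{3} - \theta, \frac{4\pi}{3} - \theta\}$ (mod $2\pi$), regardless of which individual edge was crossed.

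Next I would use the defining structural feature of the trihexagonal tiling: every edge is shared between a triangle and a hexagon. Consequently a non-singular trajectory alternates strictly between hexagonal and triangular tiles, one refraction at a time. The trajectory returns to a hexagon (resp. triangle) after an even number of refractions. Composing two refractions, say across an edge at angle $\phi_1$ followed by one at angle $\phi_2$, yields the transformation $\theta \mapsto 2\phi_2 - (2\phi_1 - \theta) = \theta + 2(\phi_2 - \phi_1)$. Since $\phi_2 - \phi_1 \in \{0, \pm\pi/3, \pm 2\pi/3\}$ (mod $\pi$), we have $2(\phi_2 - \phi_1) \in \{0, \pm 2\pi/3\}$ (mod $2\pi$), which gives precisely $\{0, 2\pi/3, 4\pi/3\}$ modulo $2\pi$.

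From here a straightforward induction on the number of edge crossings finishes the argument: if the direction lies in $\{\theta, \theta + 2\pi/3, \theta + 4\pi/3\}$ each time the trajectory is in a hexagon, then after two more refractions it still lies in that set. The case starting in a triangle is entirely symmetric, with the set $\{-\theta, \frac{2\pi}{3} - \theta, \frac{4\pi}{3} - \theta\}$ playing the analogous role.

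No step here should be a serious obstacle; the only point that needs a moment of care is verifying the two structural facts being used, namely that the edges genuinely partition into three parallel classes and that adjacent tiles always have opposite types (triangle/hexagon). Both are immediate from the definition of the trihexagonal tiling, so the proof is essentially a direct computation of the reflection group generated by the three edge directions restricted to the orbit of $\theta$ under compositions of an even number of reflections.
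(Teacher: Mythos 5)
Your proof follows essentially the same route as the paper's: the three parallel classes of edges at mutual angle $\pi/3$, each crossing acting on directions as a reflection in a fixed line, the alternation between triangles and hexagons forcing an even number of crossings between returns to the same tile type, and the composition of two such reflections being a rotation by a multiple of $2\pi/3$. The induction you sketch is exactly the paper's ``even word in a dihedral group of order six lies in the rotation subgroup of order three.''

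One slip is worth correcting: refraction with index $-1$ reflects the direction of travel across the \emph{normal} to the edge (in the paper's phrasing, the line joining the edge's midpoint to the tile's center), not across the edge itself. Reflection across the edge is ordinary billiard reflection and would, for instance, send a ray hitting the edge perpendicularly back the way it came, whereas negative refraction lets it pass straight through. In formulas, the single-crossing map is $\theta \mapsto 2\phi + \pi - \theta$ rather than your $\theta \mapsto 2\phi - \theta$; the two differ by composition with $-\mathrm{Id}$. Since the extra $\pi$'s cancel in any composition of an even number of crossings, your two-step formula $\theta \mapsto \theta + 2(\phi_2 - \phi_1)$ and hence the lemma's conclusion are unaffected, so the proof of the stated lemma goes through. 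Only your intermediate claim that after one refraction the direction lies in $\{-\theta, \frac{2\pi}{3}-\theta, \frac{4\pi}{3}-\theta\}$ is off by $\pi$ as a statement about the actual direction of travel (it is the correct set for the folded trajectory, in which triangles are traversed backwards, which is presumably the convention the introduction of the paper has in mind).
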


\begin{proof}
When a trajectory crosses an edge, its direction is transformed via a reflection across the line between the midpoint of that edge and the center of the polygon (hexagon or triangle). These reflections form a symmetry group of order $6$. Furthermore, to get back into the polygon of the same kind, an even number of reflections is required, since the trajectory alternates between triangles and hexagons. Thus the change in direction between returns to hexagons (or triangles) is by the action of an element of the group of rotations of order three.
\end{proof}

A hexagon and a triangle meet at every edge of the trihexagonal tiling, so the folded trajectory goes ``forward'' in hexagons and ``backward'' in triangles (or vice-versa, depending on convention), as in the right side of Figure \ref{fig:folding}. Since hexagons are larger than triangles, the trajectory makes forward progress in the folded tiling. This is in contrast to the behavior on the square tiling, for example, where every trajectory folds up to a finite line segment, on which it goes back and forth (see \cite{icerm}, Figure 5).

\begin{lemma}\label{lem:singular-center}
The center of each hexagon is singular, in the sense that a refractive flow through the center always hits a singularity.
\end{lemma}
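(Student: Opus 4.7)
The plan is to show that a trajectory through the center $c$ of any hexagon $H$ reaches a tiling vertex within the first triangle it enters. Using the order-six rotational symmetry of the trihexagonal tiling about $c$, I reduce to the case where the initial direction satisfies $\theta \in [0, \pi/3]$; the boundary cases $\theta \in \{0, \pi/3\}$ are immediate since the ray points straight at a vertex of $H$. The backward trajectory from $c$ is handled by the same argument applied to $\theta + \pi$, which reduces to the same fundamental domain under the rotation symmetry.

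For $\theta \in (0, \pi/3)$, let $e$ be the edge of $H$ lying between the two vertices at angles $0$ and $\pi/3$, where the trajectory exits $H$ at an interior point $P$. Let $T$ be the triangle adjacent to $H$ across $e$, and let $w$ be the vertex of $T$ opposite $e$. The key geometric observation is a short check: $w$ is the reflection of $c$ across the line supporting $e$. Both points lie on the perpendicular bisector of $e$ (the common axis of symmetry of $H$ and $T$), and the distance from each to $e$ equals $\frac{\sqrt{3}}{2}$ (the apothem of a unit regular hexagon, which coincides with the altitude of a unit equilateral triangle).

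Now apply the folding construction of \S\ref{basic-definitions}: because the refraction coefficient is $-1$, the line containing the refracted segment in $T$ is the reflection across $e$ of the line containing the incident segment in $H$. The incident segment lies on the line through $c$ and $P$; its reflection across $e$ is the line through $w$ and $P$. Hence the refracted trajectory in $T$ lies on the line $Pw$, and since it must proceed from $P$ away from $H$ (i.e., into $T$), it heads toward $w$ and reaches it at distance $|cP|$ from $P$. Since $w$ is a tiling vertex, the trajectory is singular there.

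The only delicate point is the orientation at the refraction: I must confirm that the refracted direction at $P$ is oriented from $P$ into $T$ toward $w$ rather than back into $H$. This is an immediate angular check from $\theta \in (0, \pi/3)$, which also ensures the chord $Pw$ remains in the convex triangle $T$.
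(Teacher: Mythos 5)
Your proof is correct and is essentially the paper's own argument run in reverse: your key observation that the apex $w$ of the adjacent triangle is the reflection of the hexagon's center $c$ across the shared edge $e$ (apothem $\frac{\sqrt{3}}{2}$ equals triangle altitude $\frac{\sqrt{3}}{2}$) is exactly the paper's folding step, in which the third vertex of the triangle folds down onto the center of the hexagon, so the refracted segment on the line $Pw$ must hit the vertex $w$. The additional bookkeeping you supply --- the order-six symmetry reduction to $\theta \in [0,\pi/3]$, the backward-time case via $\theta+\pi$, and the orientation check at $P$ --- is sound but inessential to the core mechanism, which is identical to the paper's.
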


\begin{proof}
A trajectory through the center of the hexagon then passes through a triangle, since every edge of a hexagon is shared with a triangle. When we fold the trajectory along this edge, the third vertex of the triangle folds down to the center of the hexagon. Thus every trajectory through the center of a hexagon passes through a vertex of a triangle, and is singular.
\end{proof}


\begin{lemma}
The clockwise or counter-clockwise travel around centers of hexagons is invariant under the refractive flow.
\end{lemma}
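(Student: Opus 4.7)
My plan is to use the folding construction from \S \ref{basic-definitions}. After folding, a nonsingular trajectory becomes a single straight line $L$ in the plane, with each successively visited tile placed by composing the reflections across the edges crossed by the trajectory so far. Because consecutive hexagons in the trajectory are always separated by a triangle, the folded copy of the second hexagon is obtained by exactly two reflections---hence by an orientation-preserving rotation. It therefore suffices to show that the centers of any two consecutive hexagons in the folded picture lie on the \emph{same side} of $L$; this is equivalent to the same rotational direction (clockwise or counter-clockwise) around the centers in the original tiling.

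The key geometric ingredient, strengthening Lemma \ref{lem:singular-center}, is that the center of a hexagon adjacent to a triangle $T$ across an edge $e$ is precisely the reflection across $e$ of the vertex of $T$ opposite to $e$. Both the hexagon center and the opposite triangle vertex lie on the perpendicular bisector of $e$, and the apothem of a unit-side regular hexagon equals the altitude of a unit-side equilateral triangle, namely $\tfrac{\sqrt{3}}{2}$. Thus folding a triangle across its shared edge with an adjacent hexagon carries the triangle's opposite vertex to that hexagon's center.

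Now let $H, H'$ be two consecutive hexagons in the trajectory with intervening triangle $T$, whose edges shared with $H$ and $H'$ are $e_1$ and $e_2$, meeting at a common vertex $v_s$; let $v_1$ be the other endpoint of $e_1$. In the folded picture, the folded triangle $T'$ retains $v_s$ and $v_1$ as vertices, and its third vertex is the center $c$ of $H$ by the key observation. Applying the folding map---reflection across $e_1$ composed with reflection across $e_2$---to the center of $H'$ and using the key observation a second time shows that the folded center of $H'$ coincides with $v_1$, the remaining vertex of $T'$. The line $L$, a chord of $T'$ entering through $e_1$ and exiting through the folded image of $e_2$, cuts off a corner at $v_s$ and therefore separates $v_s$ from the other two vertices $c$ and $v_1$. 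Hence $c$ and the folded center of $H'$ lie on the same side of $L$, and induction on the sequence of visited hexagons completes the proof. The main computation to check is the reflection calculation placing the folded center of $H'$ at $v_1$; once this is done, the rest reduces to the elementary fact that a chord joining two adjacent edges of a triangle separates their shared vertex from the remaining vertices.
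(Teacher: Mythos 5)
Your proof is correct and takes essentially the same route as the paper: both arguments fold across the two crossed edges so the second hexagon is superimposed on the first, and both hinge on the fact (the content of the proof of Lemma \ref{lem:singular-center}) that folding carries the triangle's vertex opposite a shared edge onto the adjacent hexagon's center. The only difference is bookkeeping: where the paper reads the conclusion off Figure \ref{fig:orientation} by tracking which side of the singular leaf through the hexagon center the trajectory stays on, you locate the folded centers explicitly as the vertices $c$ and $v_1$ of the folded triangle and observe that the chord $L$ separates them from the shared vertex $v_s$ --- a slightly more explicit rendering of the same picture.
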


\begin{proof}
Suppose, without loss of generality, that a given trajectory travels counter-clockwise around a particular hexagon, towards the top horizontal edge (the left side of Figure \ref{fig:orientation}). The refractive flow is symmetric across this edge, and the trajectory stays on the right side of the lower dotted line in Figure \ref{fig:orientation}, so it must also stay on the right of the upper dotted line. Thus it hits the right edge of the triangle and passes into the hexagon on the right. When we fold across these two edges, superimposing the second hexagon on the first, we can see that the trajectory travels counter-clockwise in the second hexagon as well. Since counter-clockwise travel in one hexagon leads to counter-clockwise travel in the next hexagon, and the same holds for clockwise travel, the orientation is invariant under the refractive flow.
\end{proof}

\begin{figure}
\begin{center}
\includegraphics[width=3in]{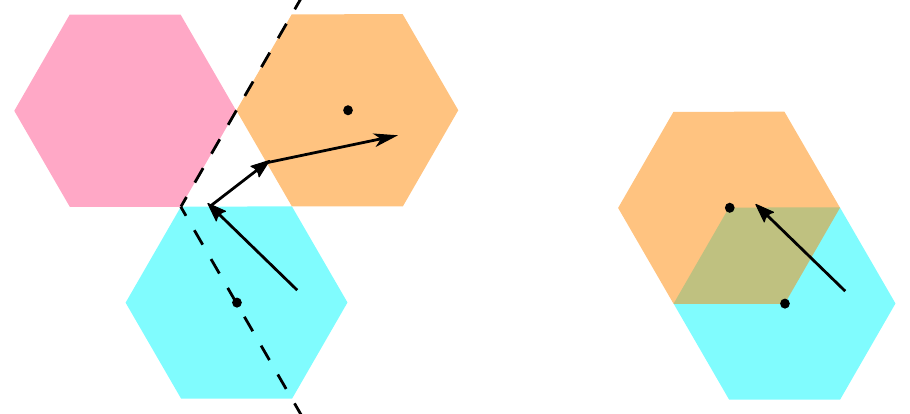}
\caption{Counter-clockwise travel in one hexagon leads to counter-clockwise travel in the next hexagon as well.}
\label{fig:orientation}
\end{center}
\end{figure}



It therefore makes sense to distinguish trajectories by the direction they travel around the centers of hexagons. We introduce notation for this:

\begin{definition}[Restricted refractive flows]
We define $T_{\theta,+}$ (resp. $T_{\theta,-}$) to be the refractive flow $T$ restricted to the set
$X_{\theta,+} \subset X$ (resp. $X_{\theta,-} \subset X$)
consisting of unit tangent vectors representing initial positions and directions of trajectories that travel in a direction in the set
 $\{\theta, \theta+2\pi/3, \theta+4\pi/3\}$ when within a hexagon and travel counter-clockwise (resp. clockwise) around the centers of hexagons.
\end{definition}

\begin{definition}[Lebesgue measure on $X_{\theta,s}$]
Consider the
the identification between $X$ and $\R^2 \times S^1$ which recovers a unit tangent vector's basepoint and direction.
By Lemma \ref{lem:three-angles}, vectors in $X_{\theta,s}$ point in one of six directions (for most $\theta$ or in three directions if $\theta$ is perpendicular to the edges of the tiles). We define $A \subset X_{\theta,s}$ to be {\em Lebesgue measurable} if for each of these six (resp. three) directions, $\theta'$, the set
$$A_{\theta'}=\{\v \in \R^2:~(\v,\theta') \in A\}$$
is Lebesgue measurable as a subset of $\R^2$. We define the Lebesgue measure of $A$ to be the sum of the Lebesgue measures of the six (resp. three) sets $A_{\theta'}$.
\end{definition}
Because the index of refraction is negative one, it follows that Lebesgue measure on $X_{\theta,s}$ is $T_{\theta,s}$-invariant. We leave the details to the reader.

We will find it convenient to use the symmetries of our tiling to limit the flows defined above we need to consider.
It turns out that up to the symmetries of the tiling,
all these flows $T_{\theta,s}$ are conjugate to ones which travel in a direction in the interval $[\frac{\pi}{3},\frac{2 \pi}{3}]$ and that travel counter-clockwise around the centers of hexagons. Formally:

\begin{proposition}
\label{prop: standardization}
For any $\theta \in S^1$ and any sign $s \in \{+,-\}$ there
is a $\theta' \in [\frac{\pi}{3},\frac{2 \pi}{3}]$ and an
isometry of the tiling $I$ so that
$I(X_{\theta,s})=X_{\theta',+}$ and
$T_{\theta',+}^t=I \circ T^t_{\theta,s} \circ I^{-1}$.
\end{proposition}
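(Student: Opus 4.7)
The plan is to exploit the $D_6$ symmetry of the trihexagonal tiling at any hexagon center. The six rotations by multiples of $\frac{\pi}{3}$ about such a center, together with the six reflections through it, are all tile-preserving isometries of the plane; because the refraction rule at each edge is defined by Euclidean reflection across that edge, any such isometry $I$ automatically intertwines the refractive flow with itself. Thus the conjugation identity $T^t_{\theta',+}=I\circ T^t_{\theta,s}\circ I^{-1}$ is automatic once the domain identification is established, and the proof reduces to tracking how these isometries permute the labels $(\theta,s)$ of the invariant subflows and assembling a single $I$ whose image lands in the standardized range.

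First I would determine the action on the pair $(\theta,s)$. A rotation $R_\alpha$ by $\alpha=k\frac{\pi}{3}$ carries a trajectory traveling in direction $\theta$ to one traveling in direction $\theta+\alpha$; since rotations preserve the orientation of the plane, the sense of travel about hexagon centers is preserved, so $R_\alpha(X_{\theta,s})=X_{\theta+\alpha,s}$. A reflection $F_\phi$ across an axis of the tiling through a hexagon center, making angle $\phi$ with the horizontal, sends direction $\theta$ to $2\phi-\theta$ and reverses the sense of rotation about the center because $F_\phi$ is orientation-reversing; so $F_\phi(X_{\theta,s})=X_{2\phi-\theta,-s}$. Observing that $X_{\theta,s}$ depends only on $\theta$ modulo $\frac{2\pi}{3}$ (the defining direction set $\{\theta,\theta+\frac{2\pi}{3},\theta+\frac{4\pi}{3}\}$ is invariant under $\theta\mapsto \theta+\frac{2\pi}{3}$), the six rotations induce a $\Z/2$-action on the label generated by adding $\frac{\pi}{3}$, and $[\frac{\pi}{3},\frac{2\pi}{3}]$ is a fundamental domain for this action on $S^1/\frac{2\pi}{3}\Z$.

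Now I would assemble $I$ in two cases. If $s=+$, choose an integer $k$ so that $\theta+k\frac{\pi}{3}\in [\frac{\pi}{3},\frac{2\pi}{3}]$ modulo $2\pi$, and take $I=R_{k\pi/3}$ and $\theta'=\theta+k\frac{\pi}{3}$. If $s=-$, first apply the reflection $F_{\pi/2}$ across the vertical axis through a hexagon center, which sends $(\theta,-)$ to $(\pi-\theta,+)$, and then compose with a rotation chosen exactly as in the previous case to move $\pi-\theta$ into $[\frac{\pi}{3},\frac{2\pi}{3}]$. The main delicate step is the claim that reflections reverse $s$: one must verify that the sense-of-rotation invariant defined just before the proposition genuinely depends on the ambient planar orientation, which can be checked directly by taking any non-singular trajectory in a hexagon, reflecting it, and confirming that its local direction of rotation about the (reflected) center flips from counter-clockwise to clockwise. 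Once this point is in hand, the rest of the argument is a straightforward case analysis.
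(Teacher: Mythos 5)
Your proof is correct and takes essentially the same route as the paper's: use a reflective symmetry of the tiling to flip the sign $s$ (reflections reverse the sense of travel around hexagon centers) and a rotational symmetry to move $\theta$ into the interval $[\frac{\pi}{3},\frac{2\pi}{3}]$, which is a fundamental domain for the rotation group's action on directions. Your write-up merely makes explicit what the paper leaves implicit --- the equivariance of the refractive flow under tiling isometries and the precise action of rotations and reflections on the labels $(\theta,s)$ --- so there is nothing to correct.
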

\begin{proof}
Applying a reflective symmetry of the tiling swaps trajectories that travel clockwise around the centers of hexagons with ones that travel counter-clockwise. The interval $[\frac{\pi}{3},\frac{2 \pi}{3}]$ represents a fundamental domain for the action of the rotational symmetries of the tiling on the circle of directions.
So, we can always take $I$ to be such a rotational symmetry or a composition of a reflective symmetry and a rotational symmetry.
\end{proof}

The above Proposition allows us to assume that our trajectories travel in a direction $\theta\in [\frac{\pi}{3},\frac{2 \pi}{3}]$ and travel counter-clockwise around centers of hexagons.

Let $\pi:X \to \R^2$ denote the projection of a unit tangent vector in the plane to the vector's basepoint in the plane. The next few results describe the closure of the projection of $X_{\theta,s} \subset X$.

\begin{lemma}\label{lem:centers}
If $\frac{\pi}{3}\leq \theta < \frac{\pi}{2}$
(respectively, $\frac{\pi}{2}< \theta \leq \frac{2\pi}{3}$), then $\overline{\pi(X_{\theta,+})}$ is all of the plane but the
periodic family open triangles in upward (resp. downward) pointing triangles in the tiling which are bounded by segments of singular trajectories hitting vertices of the triangle from the tiling in forward (resp. backward) time. See Figure \ref{fig:missing triangles} for illustration of these missing triangles. When $\theta=\frac{\pi}{2}$ we have $\overline{\pi(X_{\theta,+})}=\R^2$.
\end{lemma}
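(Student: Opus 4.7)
The approach is a reduction-then-case-analysis that isolates the dynamics within each tile. By Proposition \ref{prop: standardization} we may assume $\theta\in[\pi/3,2\pi/3]$ and $s=+$, after which we analyze each hexagon and each triangle of the tiling separately.

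For hexagons: Lemma \ref{lem:three-angles} says that trajectories of $T_{\theta,+}$ through a hexagon $H$ take one of three directions differing by $2\pi/3$, revolving counter-clockwise about $H$'s center. By Lemma \ref{lem:singular-center}, the set of singular trajectories through $H$ is countable and hence measure zero. The non-singular trajectories in each of the three directions sweep out $H$ up to this set, so $H\subset\overline{\pi(X_{\theta,+})}$.

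For a triangle $\tau$ of the tiling, the trajectories through $\tau$ take one of three directions $\{\psi_1,\psi_2,\psi_3\}$ differing by $2\pi/3$. For each edge $e$ of $\tau$, the counter-clockwise constraint around the adjacent hexagon's center at the crossing point selects a specific direction $\psi_e\in\{\psi_1,\psi_2,\psi_3\}$, and the trajectories entering $\tau$ through $e$ in direction $\psi_e$ fill a strip $S_e\subset\tau$ bounded by the two singular trajectories through the endpoints of $e$. The union $S_{e_1}\cup S_{e_2}\cup S_{e_3}$ equals $\pi(X_{\theta,+})\cap\tau$ up to the measure-zero singular set. When $\theta=\pi/2$, each $\psi_i$ is perpendicular to one of the three edge directions of the tiling, each strip is all of $\tau$, and $\overline{\pi(X_{\pi/2,+})}=\R^2$. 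When $\theta\neq\pi/2$, the counter-clockwise-determined directions tilt the strips off perpendicular in a cyclic sense that is opposite between upward- and downward-pointing triangles: for $\theta\in[\pi/3,\pi/2)$, explicit computation shows that the three strips cover all of each downward triangle but omit a central open sub-triangle in each upward triangle whose three sides extend to singular trajectories hitting the vertices of $\tau$ in forward time; the symmetric statement for $\theta\in(\pi/2,2\pi/3]$ (with upward/downward and forward/backward swapped) is obtained analogously.

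The main obstacle is this last geometric computation: working out the edge-to-direction assignment $e\mapsto\psi_e$ from the counter-clockwise constraint on the adjacent hexagon, verifying that the three resulting strips cover the whole triangle in one case (downward for $\theta<\pi/2$, upward for $\theta>\pi/2$) and omit a central open sub-triangle in the other, and identifying the bounding sides of the central sub-triangle as forward- versus backward-time singular trajectories by tracking the time orientation along each strip.
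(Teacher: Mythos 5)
Your overall strategy mirrors the paper's (hexagons first, then per-edge strips in the triangles, order-three rotational symmetry, with the counter-clockwise restriction producing the missing central triangles), but there is a genuine gap, and it sits exactly where the lemma's content lies. Your description of the strips is wrong: the counter-clockwise condition does not merely ``select a direction $\psi_e$'' for each edge $e$; it \emph{clips} the band of trajectories crossing $e$. By Lemma \ref{lem:singular-center}, the singular trajectory through the adjacent hexagon's center crosses $e$ at an \emph{interior} point and continues to the vertex of $\tau$ opposite $e$, and only the trajectories on one side of this chord travel counter-clockwise. So $S_e$ is bounded on one side by this chord (from an interior point of $e$ to the opposite vertex), not by ``the two singular trajectories through the endpoints of $e$.'' With the strips as you describe them --- full bands between the lines through the endpoints of $e$ --- no missing triangle can ever appear: for $\theta\in[\frac{\pi}{3},\frac{2\pi}{3}]$ the direction crossing $e$ is within $\frac{\pi}{6}$ of the perpendicular to $e$, so the opposite vertex lies in the band and each single band already covers all of $\tau$, contradicting the conclusion you assert. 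The paper's proof is built precisely on the clipping chord: the three chords, one per adjacent hexagon and related by order-three isometries of the tiling (which preserve the upward/downward classes), bound the central island in upward triangles when $\theta\in[\frac{\pi}{3},\frac{\pi}{2})$, while for downward triangles the chord crosses the shared edge on the other side of its midpoint, so each clipped strip contains the center and the three rotated copies cover $\tau$.

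Beyond this, the decisive geometric verification is not actually carried out: you write ``explicit computation shows'' the cover/omit dichotomy and then list that very computation (the edge-to-direction assignment, the cover-versus-omit verification, and the forward/backward-time identification of the bounding segments) as ``the main obstacle.'' Since everything before that point is bookkeeping, deferring it leaves the lemma unproved. A smaller repair is also needed in your hexagon step: in a fixed one of the three directions, the counter-clockwise trajectories fill only the half of $H$ to the right of the singular diameter through its center --- the clockwise trajectories fill the other half, which is certainly not measure zero --- so coverage of $H$ comes from the union of the three half-hexagons rotated by $\frac{2\pi}{3}$ (as in the paper), not from the countability of singular trajectories.
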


\begin{figure}
\begin{center}
\includegraphics[width=0.45\textwidth]{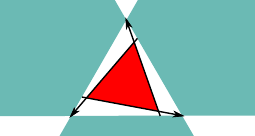} \hfill
\includegraphics[width=0.45\textwidth]{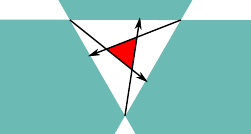}
\caption{Left: The red triangle is not in $\overline{\pi(X_{\theta,+})}$ when
$\frac{\pi}{3}\leq \theta < \frac{\pi}{2}$.
Right: Similar but shown when $\frac{\pi}{2}\leq \theta < \frac{2\pi}{3}$.}
\label{fig:missing triangles}
\end{center}
\end{figure}

Note that it is a direct consequence that no trajectory is dense in the plane.

\begin{corollary}
For each non-singular trajectory, there is a periodic family of open sets containing centers of either the upward-pointing triangles or downward-pointing triangles, such that the trajectory never enters the family of sets.
\end{corollary}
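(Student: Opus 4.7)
The plan is to use Proposition \ref{prop: standardization} to reduce to a standardized direction, and then to apply Lemma \ref{lem:centers}. Let $\gamma$ be a non-singular trajectory; by the observations preceding the definition of $X_{\theta,s}$ (Lemma \ref{lem:three-angles} together with the flow-invariance of the rotational sense around hexagon centers), there is a well-defined direction $\theta \in S^1$ and sign $s \in \{+,-\}$ with $\gamma \subset X_{\theta,s}$.

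I would then invoke Proposition \ref{prop: standardization} to produce a tiling isometry $I$ and an angle $\theta' \in [\frac{\pi}{3}, \frac{2\pi}{3}]$ with $I(X_{\theta,s}) = X_{\theta',+}$; the image $I(\gamma)$ is a non-singular trajectory in $X_{\theta',+}$. For $\theta' \neq \frac{\pi}{2}$, Lemma \ref{lem:centers} produces a periodic family $\mathcal{U}$ of open triangles --- each contained in and containing the center of an upward-pointing (when $\theta' < \frac{\pi}{2}$) or downward-pointing (when $\theta' > \frac{\pi}{2}$) triangle of the tiling --- such that $\overline{\pi(X_{\theta',+})}$ is disjoint from $\mathcal{U}$. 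Since $\pi(I(\gamma)) \subset \overline{\pi(X_{\theta',+})}$, it too is disjoint from $\mathcal{U}$, and pulling back through the tiling isometry $I^{-1}$ shows that $\pi(\gamma)$ is disjoint from $I^{-1}(\mathcal{U})$, which is again a periodic family of open triangles each inside (and containing the center of) an upward- or downward-pointing triangle of the tiling (the two types may swap if $I$ reverses orientation). This gives the conclusion whenever $\theta' \neq \frac{\pi}{2}$.

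The main obstacle is the boundary case $\theta' = \frac{\pi}{2}$, where Lemma \ref{lem:centers} only asserts $\overline{\pi(X_{\pi/2,+})} = \R^2$ and so provides no missed family for the union of trajectories. To handle an individual trajectory in this direction, I would argue directly: $\frac{\pi}{2}$ is parallel to the short Eisenstein lattice vector $\e_1 - \e_2 = (0,\sqrt{3})$, which has $\hexagon$-norm $2 \not\equiv 0 \pmod{3}$. Exploiting the reflective symmetry of the tiling across vertical lines through hexagon centers together with the folding construction, one can show that every non-singular trajectory at this direction is periodic and hence bounded, so any sufficiently distant $\Lambda$-translate of the lattice of centers of upward-pointing triangles provides a periodic family of open triangular neighborhoods disjoint from the trajectory.
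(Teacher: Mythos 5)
Your main case ($\theta' \neq \frac{\pi}{2}$) is essentially identical to the paper's proof: standardize via Proposition \ref{prop: standardization}, quote Lemma \ref{lem:centers}, and pull the missed family back through the tiling isometry (correctly noting that $I$ may swap the upward and downward families). That part is correct.

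The case $\theta' = \frac{\pi}{2}$ contains a genuine gap. The corollary requires a periodic family of open sets that \emph{contain the centers} of the upward- or downward-pointing triangles, so the family cannot be relocated to avoid the trajectory: if your ``sufficiently distant'' translation vector lies in $2\Lambda$, then the translated set of centers \emph{is} the original set of centers (the centers of upward triangles form a $2\Lambda$-invariant family), and you have gained nothing; if the vector lies in $\Lambda \smallsetminus 2\Lambda$, the translated points are not triangle centers at all, so the resulting neighborhoods do not satisfy the statement. Boundedness of the trajectory is not the obstruction here. The missing ingredient --- which is exactly what the paper's proof invokes --- is that any trajectory in the vertical-direction family passing through a triangle center is necessarily singular (provable by a folding/symmetry argument in the spirit of Lemma \ref{lem:singular-center}), so your given non-singular trajectory misses \emph{every} triangle center. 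Once you have that, your periodicity claim does finish the argument: the trajectory has compact image avoiding all centers, only finitely many centers lie within distance $1$ of it, and taking a uniform $\epsilon$-neighborhood of every upward (say) center, with $\epsilon$ below the minimum of those finitely many positive distances, produces the required periodic family around the actual centers. You need to supply the missed-centers claim, not a translated lattice.
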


See Figure \ref{fig:dark-triangles} for an example.

\begin{proof}
Each trajectory lies in some $X_{\theta',s}$, which up to a tiling symmetry has the form $X_{\theta,+}$ for some $\theta \in \left[\frac{\pi}{3},\frac{2\pi}{3}\right]$ by Proposition \ref{prop: standardization}. If $\theta \neq \frac{\pi}{2}$, then this follows directly from Lemma \ref{lem:centers}. When $\theta=\frac{\pi}{2}$, it is not hard to show that all trajectories are periodic and singular trajectories can not intersect triangle centers.
\end{proof}

\begin{proof}[Proof of Lemma \ref{lem:centers}]
The lemma concerns the case when $\frac{\pi}{3}\leq \theta \leq \frac{2\pi}{3}$. We will leave the case
of $\theta=\frac{\pi}{2}$ to the reader though it follows from the same type of observations.

First, we will show that the flow covers all of each hexagon. The counter-clockwise flow in direction $\theta$ covers half of the hexagon, everything to the right of the singular trajectory through the hexagon center. The flow in direction $\theta$ also includes the flows in directions $\theta+\frac{2\pi}3$ and $\theta+\frac{4\pi}3$. Thus we also cover the images of this half-hexagon under rotations of order three.
The three rotated images of the half hexagon cover the entire hexagon.

Now we will explain how this missing triangles appear.
A counter-clockwise trajectory traveling in a direction $\theta \in [\frac{\pi}{3},\frac{\pi}{2})$  in a hexagon misses the centers of upward-pointing triangles: Consider a flow in this direction, which is to the right of vertical, on a hexagon (Figure \ref{fig:missing-centers}a). Recall that the flow through the center of the hexagon is singular (Lemma \ref{lem:singular-center}), and passes through the top vertex of the next triangle. This singular flow (thick line) divides the flow on the hexagon and triangle into a left side (clockwise flow) and a right side (counter-clockwise flow). Since we restrict our attention to counter-clockwise flow, only the flow to the right of the singularity is allowed, which misses the center of the triangle. Trajectories
in $X_{\theta,+}$ also travel within hexagons in directions which differ from $\theta$ by a rotation
of order three (i.e., by rotations of $\pm \frac{2\pi}{3}$); see Lemma \ref{lem:three-angles}. Observe that there are isometries of our
tiling whose derivatives realize these rotations. Such isometries preserve the sets of upward (resp. downward) pointing triangles. It follows that a triangular island in the center of each upward pointing triangle is missed, whose boundaries are the singular flows through the centers of adjacent hexagons.

By the same argument, a counter-clockwise trajectory with $\pi/2<\theta\leq 2\pi/3$ misses a triangular island at the centers of the downward-pointing triangles (Figure \ref{fig:missing-centers}b).
%

Now we will show that when $\theta\in[\pi/3,\pi/2)$,
the image of $X_{\theta,+}$ covers all of the downward-facing triangles (Figure \ref{fig:covers-triangle}a). The singular trajectory in this direction through the hexagon center intersects the edge between the previous (downward-facing) triangle and the hexagon to the left of its midpoint. The flow is on the right side of the singular trajectory, so this flow covers a portion of the triangle that includes more than half of its area, including the triangle center. The flow in direction $\theta$ also includes the three rotations of order $3$ of direction $\theta$, and the $3$ rotations of the portion of the triangle cover the entire triangle.

Again by the same argument, a counter-clockwise flow in direction $\theta\in(\pi/2,2\pi/3]$ covers all of the upward-facing triangles (Figure \ref{fig:covers-triangle}b).
\end{proof}


\begin{figure}
\begin{center}
\includegraphics[width=0.45\textwidth]{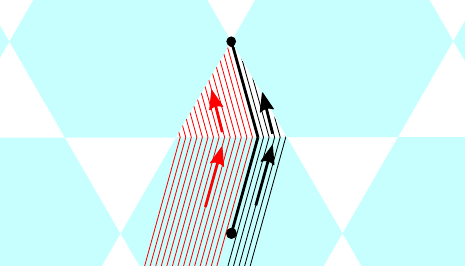} \hfill
\includegraphics[width=0.45\textwidth]{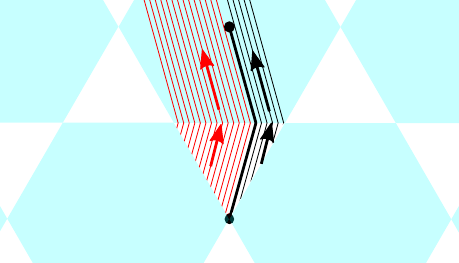}
\caption{Counter-clockwise trajectories are shown in black and clockwise in red.
Counter-clockwise trajectories that travel (a) to the right of vertical in hexagons miss the centers of upward triangles, while those that travel (b) to the left of vertical in hexagons miss the centers of downward triangles.}
\label{fig:missing-centers}
\end{center}
\end{figure}

\begin{remark}
Although the flows $T_{\theta,s}$ equipped with their Lebesgue measures are often ergodic (Theorem \ref{thm:main ergodic}), trajectories do not equidistribute in the plane (in the sense of the ratio Ergodic theorem) even outside the missed triangles,
because different points in the plane are hit different numbers of times (as few as zero and as many as three times) by the projection of $X_{\theta,s}$ to the plane. For example, generic trajectories run through some regions three times as often as other regions of the same area.
\end{remark}

\begin{figure}
\begin{center}
\includegraphics[width=0.45\textwidth]{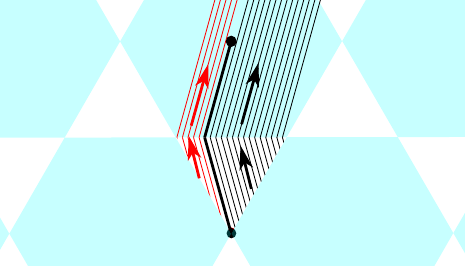} \hfill
\includegraphics[width=0.45\textwidth]{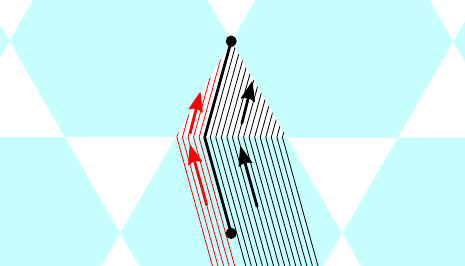}
\caption{Counter-clockwise trajectories that travel (a) to the right of vertical in hexagons cover all the downward triangles, while those that travel (b) to the left of vertical in hexagons cover all the upward triangles.}
\label{fig:covers-triangle}
\end{center}
\end{figure}

\begin{proof}[Proof of Theorem \ref{thm:1} assuming Theorem \ref{thm:main ergodic}]
Recall, we need to show that for Lebesgue-almost every starting point $\x$ and starting direction $\theta$ our $T$-trajectory is dense in the plane minus a periodic family of triangular islands.

Recall that the set of ergodic directions $\mathcal E$ used in Theorem \ref{thm:main ergodic} is full measure. (See the discussion under the theorem.) So Lebesgue-almost surely $\theta \in {\mathcal E}$ and $- \theta \in \mathcal E$. Therefore by Theorem \ref{thm:main ergodic}, we may assume that $T_{\theta,+}$, $T_{\theta,-}$, $T_{-\theta,+}$ and $T_{-\theta,-}$ are all ergodic.

Think of $\theta$ as fixed and satisfying this statement that the four maps above are ergodic. Then Lebesgue almost-every $\x \in \R^2$ yields a unit tangent vector $(\x,\theta)$ lying in one of four possible sets:
\begin{itemize}
\item It lies in $X_{\theta,+}$ (resp. $X_{\theta,-}$) if $\x$ lies in the interior of a hexagon and flow of $\x$ in direction $\theta$ moves counter-clockwise (resp. clockwise) around the center of the hexagon.
\item It lies in $X_{-\theta,+}$ (resp. $X_{-\theta,-}$) if $\x$ lies in the interior of a triangle and the refractive trajectory travels counter-clockwise (resp. clockwise) around the center of the next hexagon entered.
\end{itemize}
Since each of the four flows $T_{\theta,+}$, $T_{\theta,-}$, $T_{-\theta,+}$ and $T_{-\theta,-}$
are ergodic (and the natural topologies on the domains have a countable basis where all open sets are assigned positive mass by Lebesgue measure), it follows from standard results in ergodic theory that almost every trajectory in any of these four domains is dense in that domain.
\compat{Leaving a link as an informal reference for us: \url{http://math.stackexchange.com/questions/990229/ergodic-system-has-a-e-dense-orbits}.}
We conclude
that almost every $\x \in \R^2$ gives a unit tangent vector $(\x, \theta)$ whose trajectory is dense in one of the four
sets $X_{\theta,+}$, $X_{\theta,-}$, $X_{-\theta,+}$ or $X_{-\theta,-}$.
\end{proof}

\section{From the tiling to a translation surface}
\label{sect: to a translation surface}

\comref{I think that, in general, there is some confusion due to the fact that usually translation surfaces are obtained simply by using translations from
polygons in the plane. Here, you use rotations, but still obtain a translation surface. This implies that the directions are not “what you see is
what you get”, meaning that the directions on S are different from the
ones on the tiling of $\R^2$ by rhombi. This is stated for v i , but I think it
should be clearly written that a direction on the plane does not correspond to the same direction on S. Finally, when in Section 4.5 you use
right Dehn twists along cylinders in directions v i you could reference the
pictures you included of the cylinders to help make this distinction clear.}

%

We define the vectors $\e_0$, $\e_1$ and $\e_2$ as in \eqref{eq:e}.
We normalize our tiling so that centers of hexagons lie in $2\Lambda = \langle 2 \e_0, 2\e_1, 2 \e_2\rangle$.
This makes all edges of our tiling have length one. We use $H_{\c}$ to denote the hexagon whose center is $\c \in 2\Lambda$.
We split each hexagon into three rhombuses: $H_{\c}=\bigcup_{i=0}^2 R^i_{\c}$ where
$R^i_{\c}$ is the rhombus which has the hexagon's center $\c$ as one vertex and has edge vectors at this vertex given by the two vectors $-\e_{i+1}$ and $-\e_{i-1}$ (where subscripts $\e_\ast$
are interpreted modulo three). This means that $R^i_{\c}$ has vertices
$\c$, $\c-\e_{i+1}$, $\c+\v_i$, and $\c-\e_{i-1}$. Note that the vector representing the short diagonal of $R^i_\c$ pointing away from the center of the hexagon is $\v_i$.
We label the edges of each rhombus $R^i_\c$ by $\{1,2,3,4\}$ counterclockwise starting with the edge leaving the center $\c$ in direction $-\e_{i+1}$. See Figure \ref{fig:rhombi}.

\begin{figure}[b]
\begin{center}
\includegraphics[height=3.5in]{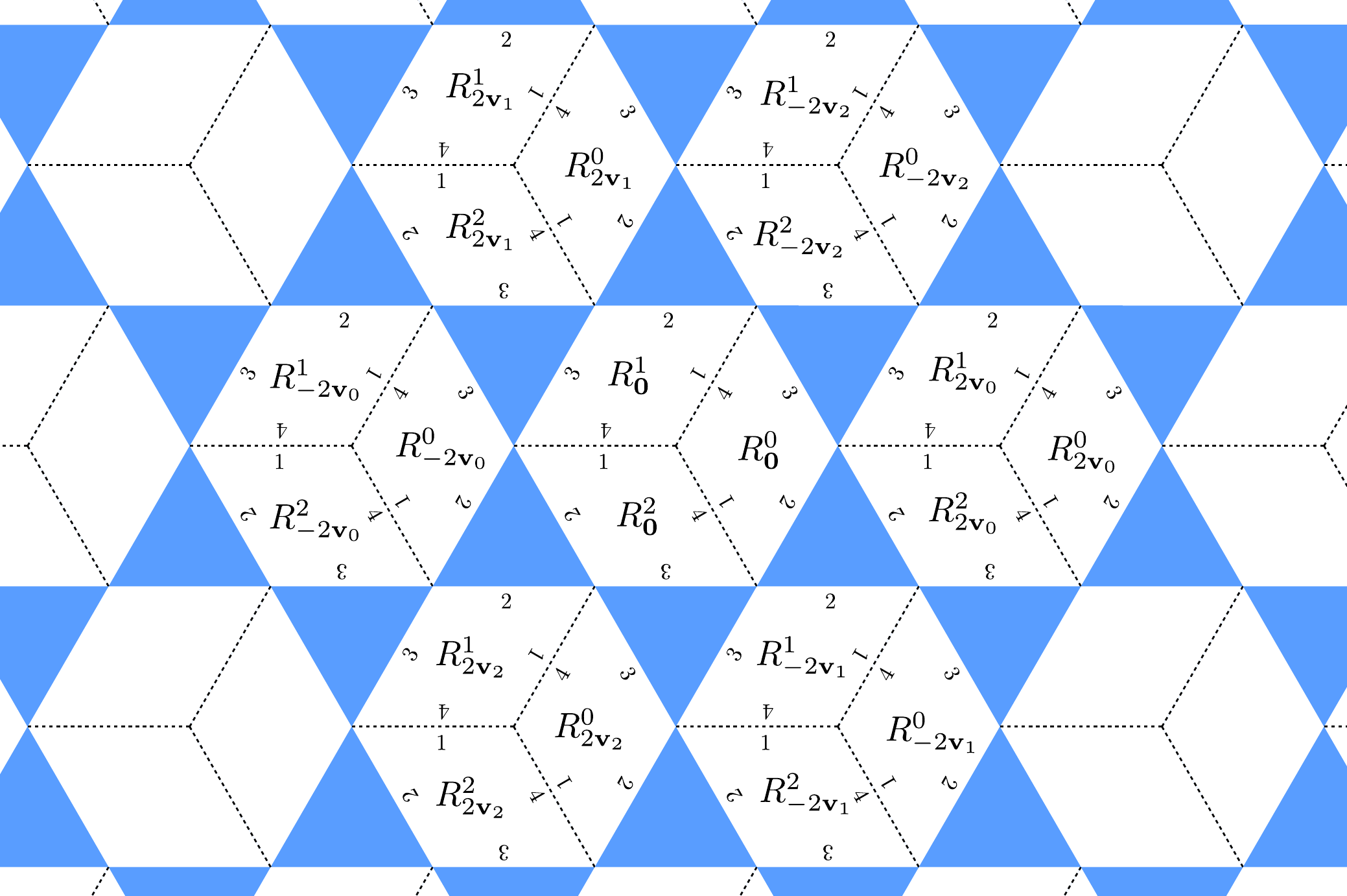}
\caption{Some named rhombi and their edge labels.}
\label{fig:rhombi}
\end{center}
\end{figure}

The decomposition into rhombi is useful for understanding the behaviors of trajectories:

\begin{lemma}[Trajectories exiting rhombi]
\comref{Lemma 3.1, page 13}
\compat{We decided the referee's remark was erroneous. See the email.}
Consider a trajectory traveling through the interior of a rhombus $R^i_\c$ in a direction which makes an angle $\theta \in [\frac{\pi}{3},\frac{2 \pi}{3}]$ with the vector $\e_i$. If such a trajectory exits through the interior of an edge, that edge must have label either $3$ or $4$. In addition:

\label{lem:exit}
\begin{enumerate}
\item Such a trajectory exiting through edge $3$ will exit hexagon $H_\c$
through that edge, pass through the triangle opposite that side, then enter hexagon
$H_{\c+2 \v_{i}}$ and move through the hexagon until it enters rhombus $R^{i-1}_{\c+2 \v_{i}}$ through edge $1$. Furthermore if the trajectory exits $R^i_\c$ at point $p$ of edge $3$ traveling in direction $\theta$, and enters $R^{i-1}_{\c+2 \v_{i}}$ through edge $1$ at point $q$ traveling in direction $\eta$, then
$\eta=\theta-\frac{2\pi}{3}$ and $q$ is the image of $p$ under the rotation by angle
$\frac{-2\pi}{3}$ about the vertex in common between the rhombi.
\item Such a trajectory exiting through edge $4$ will exit hexagon $H_\c$, pass through a triangle, and then enter hexagon $H_{\c-2 \v_{i-1}}$ through edge $2$
of rhombus $R^{i-1}_{\c-2 \v_{i-1}}$. Furthermore if the trajectory exits $R^i_\c$
at point $p$ in edge $4$ traveling in direction $\theta$, and enters $R^{i-1}_{\c-2 \v_{i-1}}$ at point $q$ of edge $2$ traveling in direction $\eta$, then
$\eta=\theta-\frac{2\pi}{3}$ and $q$ is the image of $p$ under the rotation by angle
$\frac{-2\pi}{3}$ about the vertex in common between the rhombi.
\end{enumerate}
\end{lemma}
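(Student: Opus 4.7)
The plan is to reduce to a standard configuration by tiling symmetry, enumerate the possible exit edges of $R^i_\c$ via a normal computation, and then trace the trajectory through the intermediate triangle(s) and into the destination hexagon. By composing a rotation by a multiple of $\pi/3$ with a translation in $2\Lambda$, I would reduce to $\c = \0$ and $i = 0$, so that $R^0_\0$ has vertices $\0, -\e_1, \v_0, -\e_2$ and the direction $\theta$ is measured from $\e_0$. The four outward unit normals of $R^0_\0$ lie at angles $-\frac{5\pi}{6}, -\frac{\pi}{6}, \frac{\pi}{6}, \frac{5\pi}{6}$ for edges $1,2,3,4$; checking that the inner product of the direction $(\cos\theta, \sin\theta)$ with these normals is non-positive for $\theta \in [\frac{\pi}{3}, \frac{2\pi}{3}]$ on edges $1$ and $2$, one sees that exits must occur through edges $3$ or $4$.

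For case (1), edge $3$ is a boundary edge of $H_\0$ shared with the upward-pointing triangle $\triangle$ having vertices $A = \v_0$, $B = -\e_2$, and $C = -\e_2 + \e_0$; its three neighbouring hexagons are $H_\0$, $H_{2\v_0 + 2\v_1}$, and $H_{2\v_0}$. Applying the refraction rule (reflection of the direction vector across the edge-normal line, which is Snell's law for refractive index $-1$), the direction in $\triangle$ becomes $\frac{\pi}{3} - \theta$. A further outward-normal check at the edges of $\triangle$ rules out exit through the top edge $BC$, so the trajectory leaves $\triangle$ through edge $CA$ into $H_{2\v_0}$; the second refraction there gives direction $\theta - \frac{2\pi}{3}$ as claimed. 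Tracing this straight segment inside $H_{2\v_0}$ to its first intersection with an interior rhombus diagonal, I would verify that the crossing lies on edge $1$ of $R^2_{2\v_0} = R^{i-1}_{\c + 2\v_i}$. Case (2) is analogous, but with two additional non-refracting diagonal crossings, one from $R^0_\0$ into $R^1_\0$ inside $H_\0$ and one inside $H_{\c - 2\v_{i-1}} = H_{-2\v_2}$; the intermediate triangle is a downward-pointing triangle $\triangle'$ incident to the common vertex $-\e_2 = \c - \e_{i-1}$, and the argument proceeds along the same lines.

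To verify the rotation identity $q = \rho_{-2\pi/3, V}(p)$, the cleanest approach is a direct computation in complex coordinates with $V$ placed at the origin. Parameterize $p = V + t\,\vec{u}$ along the exit edge from $V$, where $\vec{u}$ is the unit direction from $V$ into that edge; then solve successively for the exit point of the intermediate triangle and for $q$, using the refracted directions already computed. A short trigonometric reduction (in which the identity $\sin(\theta + \pi/3) + \sqrt{3}\sin(\theta - \pi/6) = 2\sin\theta$ plays a key role) collapses the expression to $q - V = t\, e^{-i 2\pi/3}\vec{u}$, which is exactly the rotation claim. The main obstacle I foresee is the bookkeeping in case (2), where additional non-refracting interior diagonal crossings complicate the trajectory description; however these crossings do not affect the algebraic collapse since they are merely straight-line continuations within a single medium.
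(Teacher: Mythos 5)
Your overall strategy---reduce by symmetry, identify admissible exit edges by outward-normal signs, and push the trajectory through the intermediate triangle by explicit refraction computations until the rotation identity $q-V=e^{-i2\pi/3}(p-V)$ collapses out---is sound, and your case (1) trace is correct: the normals $-\frac{5\pi}{6},-\frac{\pi}{6},\frac{\pi}{6},\frac{5\pi}{6}$, the triangle with vertices $\v_0$, $-\e_2$, $-\e_2+\e_0$, and the refracted directions $\frac{\pi}{3}-\theta$ and then $\theta-\frac{2\pi}{3}$ all check out (that triangle is downward-pointing, not upward-pointing, but since you identify it by coordinates this is harmless). The genuine error is your case (2) itinerary. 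For $R^i_\c$, the edges labeled $1$ and $4$ (those meeting the hexagon center $\c$) are \emph{interior diagonals} of $H_\c$, where nothing refracts, while edges $2$ and $3$ lie on the hexagon boundary. Case (2) is therefore the time-reversed mirror of case (1), not case (1) with extra diagonals: the only diagonal crossing is at $p$ itself (edge $4$ of $R^0_\0$, which is edge $1$ of $R^1_\0$); the trajectory then traverses $R^1_\0$, refracts at edge $2$ of $R^1_\0$---the top boundary edge of $H_\0$---into the (upward-pointing) triangle sharing that edge, crosses it, and refracts a second time into $H_{-2\v_2}$. Crucially, edge $2$ of $R^2_{-2\v_2}$ is itself a boundary edge of $H_{-2\v_2}$, namely the one shared with that triangle, so $q$ is the point of this \emph{second refraction}, reached the instant the trajectory enters the hexagon: there is no non-refracting diagonal crossing inside $H_{-2\v_2}$ before $q$. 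As written, your plan would trace past $q$ to an interior diagonal of $H_{-2\v_2}$ and test the rotation identity at the wrong point, where it fails; once the itinerary is corrected the algebra collapses exactly as in case (1). (A smaller slip: your symmetry reduction should use rotations by multiples of $\frac{2\pi}{3}$ about hexagon centers; odd multiples of $\frac{\pi}{3}$ are tiling symmetries but do not preserve the rhombus decomposition or the labels.)

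Two further points. First, the step you defer in case (1)---that the first diagonal met inside $H_{2\v_0}$ is edge $1$ of $R^2_{2\v_0}$ rather than the other diagonal of $R^1_{2\v_0}$---cannot be settled by normal signs alone: for the hexagon direction $\theta-\frac{2\pi}{3}\in(-\frac{\pi}{3},0)$ \emph{both} diagonals of $R^1_{2\v_0}$ have positive outward component, and which is hit first depends on the correlation between entry point and direction inherited from $(p,\theta)$. Your coordinate computation does settle this (the collapse places the crossing of the full diagonal line at parameter $t\in(0,1)$ from the common vertex, and convexity of $R^1_{2\v_0}$ then certifies it is the exit point), but be aware this is where the real content of the location claim lives, so it cannot be shortcut. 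Second, the paper's proof is substantially slicker and worth comparing: it folds the tiling across the two refracting edges crossed by the trajectory; since these two lines meet at the common vertex at angle $\frac{\pi}{3}$, the two folds compose to the rotation by $\frac{2\pi}{3}$ about that vertex, which yields $\eta=\theta-\frac{2\pi}{3}$ and $q=$ (rotation by $-\frac{2\pi}{3}$)$(p)$ simultaneously, and reduces the entry-edge claims to a straight line crossing a single folded edge (see Figure \ref{fig:exiting}). Your identity $\sin(\theta+\frac{\pi}{3})+\sqrt{3}\sin(\theta-\frac{\pi}{6})=2\sin\theta$ (which is true) is exactly this reflection-composition fact unwound in coordinates: your route is more elementary but longer, and more fragile in the bookkeeping, as case (2) shows.
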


\begin{proof} We will prove these for $\mathbf{c}=\mathbf{0}$ using Figure \ref{fig:rhombi}, and the general case follows by translation.
\begin{enumerate}
\item \label{exiting3}
Consider a trajectory exiting rhombus $R_{\mathbf{0}}^0$ through edge $3$, with a direction $\theta\in [\frac{\pi}{3},\frac{2 \pi}{3}]$. This trajectory crosses the adjacent triangle and then enters a hexagon. Because of the angle condition, it must enter the lower hexagon $H_{2\v_0}$, passing through rhombus $R_{2\v_0}^1$ and into rhombus $R_{2\v_0}^2$. We can see this by folding the tiling across the two edges that the trajectory crosses, so that $R_{\mathbf{0}}^0$ and $R_{2\v_0}^2$ are adjacent, as in the left two pictures of Figure \ref{fig:exiting}.

The two lines we fold along are separated by an angle of $\pi/3$, so folding along both of them amounts to a rotation by twice the angle, $2\pi/3$, about their intersection point. Thus, $q$ is the image of $p$ under the inverse of this rotation, a rotation by $-2\pi/3$ about the common vertex, and similarly the angle of the trajectory in $R_{2\v_0}^2$ is  $\eta = \theta - \frac{2\pi}3$.

The geometry is the same for $R_{\mathbf{0}}^1$ and $R_{\mathbf{0}}^2$, with the picture rotated by $2\pi/3$ and $4\pi/3$, respectively.

\item \label{exiting4}
Consider a trajectory exiting rhombus $R_{\mathbf{0}}^0$ through edge $4$, with a direction $\theta \in [\frac{\pi}{3},\frac{2 \pi}{3}]$. This trajectory crosses $R_{\mathbf{0}}^1$ and then enters an adjacent triangle. Because of the angle condition, it must exit through edge $2$ of $R_{\mathbf{0}}^1$ and enter the hexagon on the right $H_{-2\v_2}$, into rhombus $R_{-2\v_2}^2$. We can see this by folding the tiling across the two edges that the trajectory crosses, so that  $R_{\mathbf{0}}^0$ and $R_{-2\v_2}^2$ are adjacent, as in the right two pictures of Figure \ref{fig:exiting}.

The two lines we fold along are again separated by an angle of $\pi/3$, so the angle result and the locations of $p$ and $q$ follow as in part (\ref{exiting3}). Again, the geometry for $R_{\mathbf{0}}^1$ and $R_{\mathbf{0}}^2$ is the same, but with the picture rotated.
\end{enumerate}
To see the general case of $\c \neq \0$, observe that the refractive flow is invariant under translations by vectors in $2 \Lambda$. These translations act on rhombus labels by addition.
\end{proof}

\begin{figure}
\begin{center}
\includegraphics[height=1.2in]{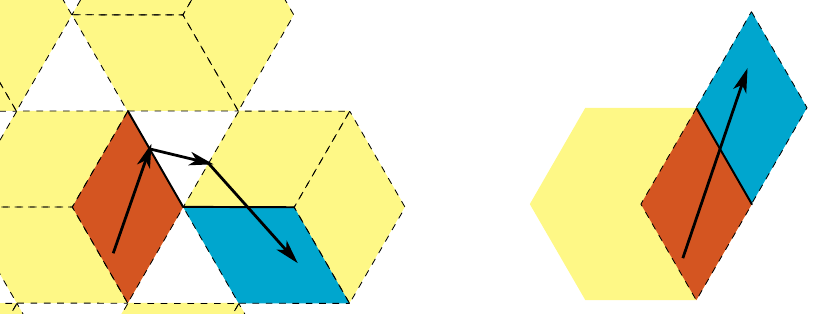}
\hfill
\includegraphics[height=1.2in]{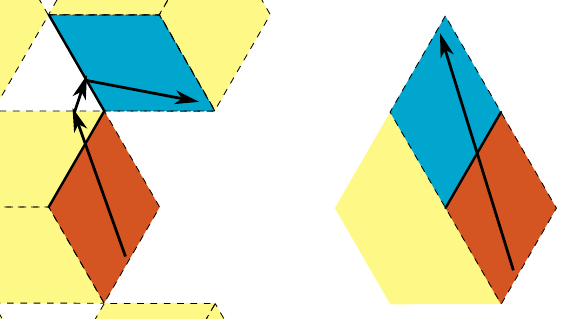}
\caption{A trajectory leaving through the solid edge of the orange rhombus enters the blue rhombus through the solid edge.}
\label{fig:exiting}
\end{center}
\end{figure}

\begin{figure}
\begin{center}
\includegraphics[width=4in]{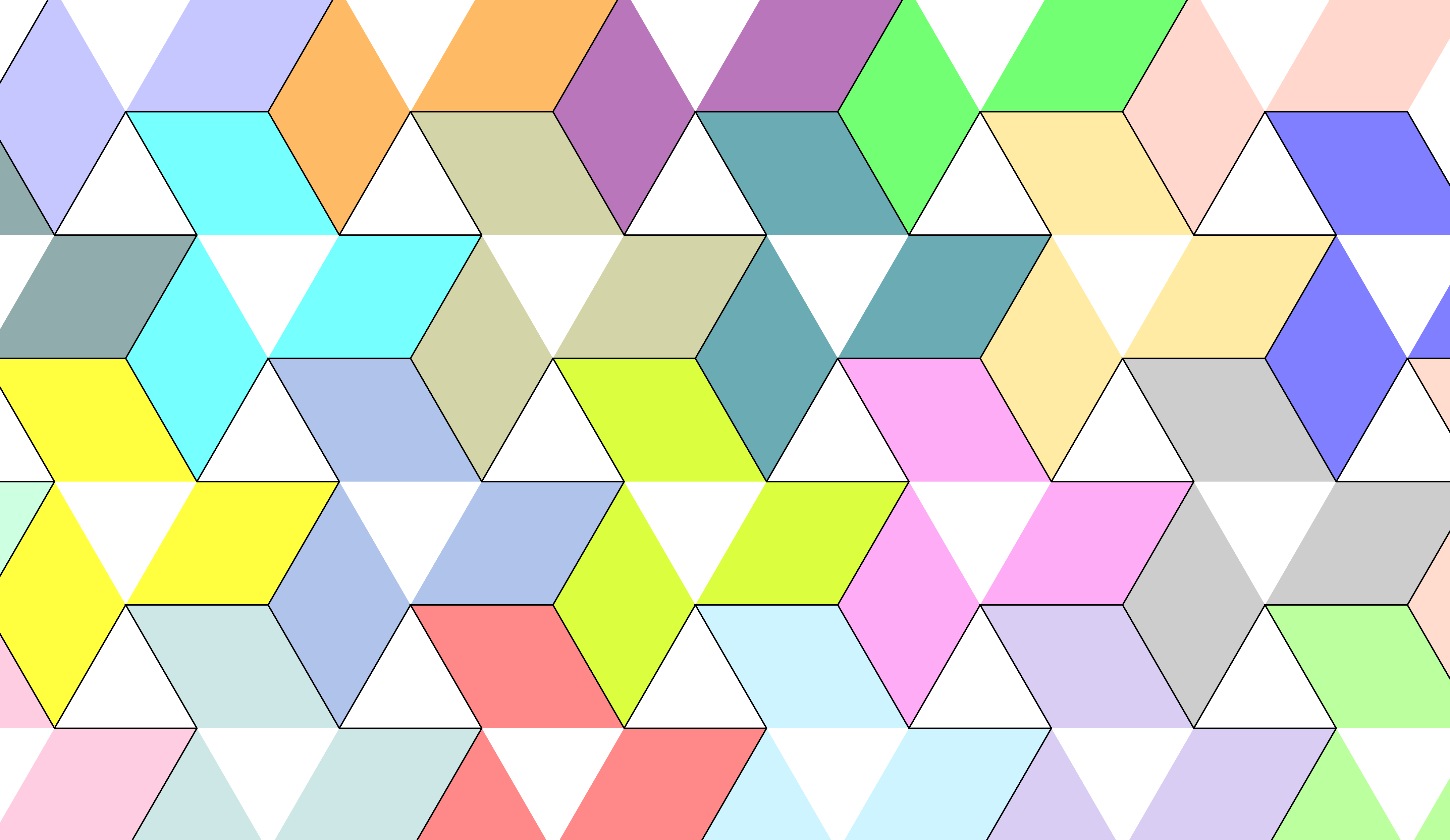}
\caption{A decomposition of the hexagons into subsets of three rhombi. Triples of rhombi surrounding downward pointing triangles are shown in the same color and outlined.}
\label{fig:pinwheel}
\end{center}
\end{figure}

We will build a metric surface $S$ by gluing the rhombi edge-to-edge by rotations.
Technically, to consider $S$ a surface, the vertices of the rhombi must be removed before making these identifications, because otherwise these points will not have neighborhoods homeomorphic to a neighborhood in the plane because infinitely many rhombi will be identified at a single vertex.
Our identifications turn out to always be by a rotation by $\pm \frac{2\pi}{3}$ about a vertex of the tiling. We partition the rhombi into triples which appear around downward-pointing triangles in the tiling (Figure \ref{fig:pinwheel}).
Within such a triple, we identify the rhombi along
edges $2$ and $4$ to form a cylinder. See the left side of Figure \ref{fig:cylinder}. Similarly, there is another partition of the rhombi into triples given by considering triples of rhombi surrounding upward-pointed triangles in the tiling. We glue edges labeled $1$ and $3$ of these rhombi together to form a cylinder.
See the right side of Figure \ref{fig:cylinder}. Formally, the gluings are described by the following rules, where we use
$E_j(R^i_\c)$ to denote the edge with label $j$ of rhombus $R^i_\c$:
\begin{equation}
\label{eq:gluings}
\begin{array}{rclcrcl}
E_1(R^i_\c) & \to & E_3(R^{i+1}_{\c-2 \v_{i+1}}),
&  &
E_3(R^i_\c) & \to & E_1(R^{i-1}_{\c+2 \v_{i}}),\\
E_2(R^i_\c) & \to & E_4(R^{i+1}_{\c+2 \v_{i}})
& \quad \text{and} \quad &
E_4(R^i_\c) & \to & E_2(R^{i-1}_{\c-2 \v_{i-1}}).
\end{array}
\end{equation}

\begin{figure}
\begin{center}
\includegraphics[width=.48\textwidth]{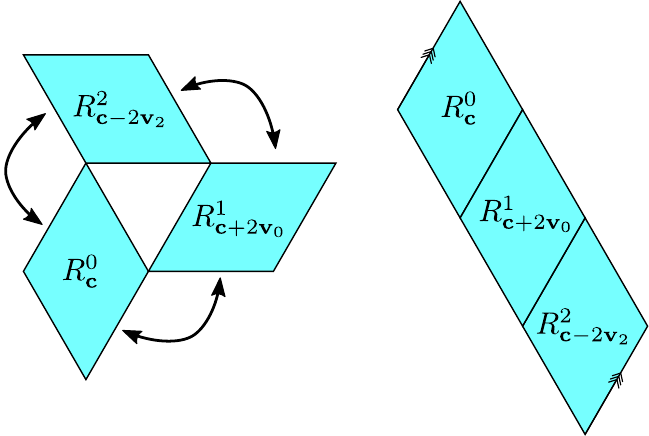}
\hfill
\includegraphics[width=.48\textwidth]{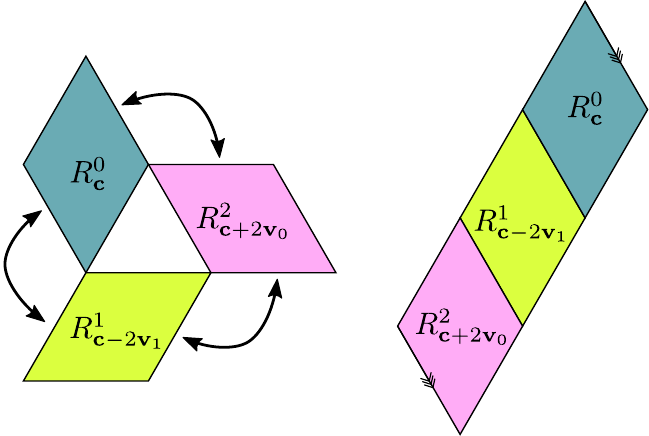}
\end{center}
\caption{Far left: A triple of rhombi surrounding an downward-pointing triangle. Arrows denote gluings used to form a cylinder in $S$. Second to left: The cylinder obtained by gluing these edges. Right two figures: The corresponding pictures for rhombi surrounding an upward-pointing triangle. Colors were chosen to match Figure \ref{fig:pinwheel}.}
\label{fig:cylinder}
\end{figure}

Let $Y$ be the surface formed by gluing together opposite sides of $R^0_\0$ by translation.
Since these edges are parallel, the edges are glued by translation and the surface $Y$ is a torus. The torus $Y$ is depicted in Figure \ref{fig:tori}. We use $Y^\circ$ to denote the torus $Y$ with the single point formed by identifying the vertices of the rhombus removed.

\begin{figure}
\begin{center}
\includegraphics[height=2in]{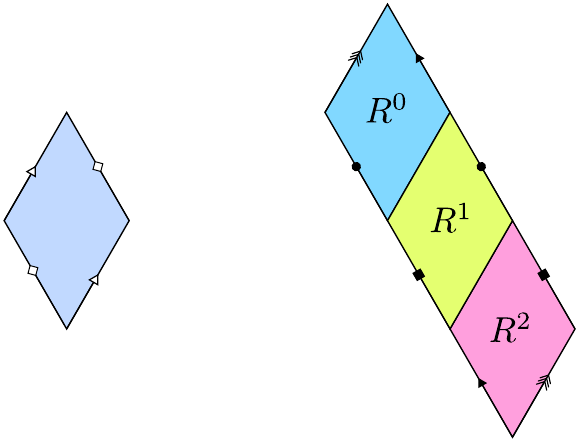}
\caption{The tori $Y$ (left) and $Z$ (right).}
\label{fig:tori}
\end{center}
\end{figure}

\begin{proposition}
\label{prop:covering}
The map from the union of interiors of rhombi to the interior of $R^0_\0$ which carries the interior of each
$R^i_\c$ isometrically to the interior of $R^0_\0$ in a manner which respects the edge labels by $\{1,2,3,4\}$
extends to a covering map $\pi:S \to Y^\circ$.
\end{proposition}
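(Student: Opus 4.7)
My plan is to (1) define $\pi$ on the interior of each rhombus via an explicit planar isometry, (2) verify these isometries are compatible with the edge-identifications in both $S$ and $Y$, and (3) conclude via evenly covered neighborhoods. For step (1), let $\phi^i_\c \colon R^0_\0 \to R^i_\c$ be the unique orientation-preserving planar isometry respecting the labeling of edges by $\{1,2,3,4\}$; concretely $\phi^i_\c = T_\c \circ \rho_{2\pi i/3}$, where $\rho_\theta$ denotes rotation by $\theta$ about the origin and $T_\c$ is translation by $\c$. Define $\pi$ on $\operatorname{int}(R^i_\c)$ to be $(\phi^i_\c)^{-1}$ composed with the natural inclusion $\operatorname{int}(R^0_\0) \hookrightarrow Y^\circ$ (the latter via the quotient $R^0_\0 \to Y$).

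The key step is (2): checking that $\pi$ extends continuously across each of the four identifications in \eqref{eq:gluings}. Take for example $E_1(R^i_\c) \to E_3(R^{i+1}_{\c-2\v_{i+1}})$; a direct check shows these two edges share the vertex $\c - \e_{i+1}$, and in $S$ the identification is realized by rotation by $+\frac{2\pi}{3}$ about this vertex. The composition
\[
E_1(R^0_\0) \xrightarrow{\phi^i_\c} E_1(R^i_\c) \xrightarrow{S\text{-glue}} E_3(R^{i+1}_{\c-2\v_{i+1}}) \xrightarrow{(\phi^{i+1}_{\c-2\v_{i+1}})^{-1}} E_3(R^0_\0)
\]
has linear part $\rho_{-2\pi(i+1)/3} \circ \rho_{2\pi/3} \circ \rho_{2\pi i/3} = \operatorname{id}$, hence is a translation carrying $E_1(R^0_\0)$ to $E_3(R^0_\0)$. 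Since the $Y$-identification is by definition the unique such translation, the two identifications agree. The three remaining rules in \eqref{eq:gluings} are handled analogously, the sign of the $\pm\frac{2\pi}{3}$ rotation being forced in each case to make the linear parts cancel. Consequently $\pi$ extends to a continuous local isometry $S \to Y^\circ$.

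For step (3), fix $y \in Y^\circ$, lift to $\tilde y \in R^0_\0 \setminus \{\text{vertices}\}$, and take an open disk $U \ni \tilde y$ small enough to avoid every vertex and to meet at most one edge of $R^0_\0$. If $U$ lies in the interior of $R^0_\0$, then $\pi^{-1}(U) = \bigsqcup_{i,\c}\phi^i_\c(U)$, each piece mapped homeomorphically. If $U$ crosses an edge, decompose $U = U^- \cup U^+$ across the $Y$-identification; the compatibility established in step (2) shows that each component of $\pi^{-1}(U)$ in $S$ is a copy of $U^-$ in some $R^i_\c$ glued to a copy of $U^+$ in the adjacent rhombus across the corresponding $S$-edge-identification, again homeomorphic to $U$. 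The main technical obstacle is the sign-checking in step (2); once done, the rest is essentially formal.
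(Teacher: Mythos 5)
Your proof is correct and takes essentially the same approach as the paper: the paper likewise defines $\pi$ via the unique orientation- and label-preserving isometries between rhombi and extends across edges by observing that the gluings of both $S$ and $Y$ are orientation-preserving isometries pairing edges labeled $1$ with $3$ and $2$ with $4$. Your explicit cancellation of linear parts and the evenly-covered-neighborhood verification merely spell out details the paper leaves implicit.
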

\begin{proof}
First we must check that the map from the interior of a rhombus $R^i_\c$ to $R^0_\0$ is well defined.
Recall from the definition of $R^i_\c$ and the labeling, the center of the hexagon $H_\c$ is the vertex of $R^i_\c$ which belongs to the edges with labels $1$ and $4$. This is always an obtuse angle. Since edges of rhombi are labeled by
$\{1,2,3,4\}$ in counterclockwise order, all rhombi are isometric by a orientation-preserving and label-preserving isometry.
Thus the map $\pi$ is well defined on the interiors of rhombi.

To see that we can extend to the boundaries, note that the edge gluings of both $S$ and $Y$ are by orientation-preserving Euclidean isometries (rotations for $S$ and translations for $Y$). From \eqref{eq:gluings} we observe that
the gluing rules always identify an edge labeled $1$ with an edge labeled $3$ and always identify an edge labeled $2$ with an edge labeled $4$. This was also the choice used to form $Y$ from $R^0_\0$. Therefore, we can extend the map to edges in a well defined way.
\end{proof}

A {\em translation surface} is a topological surface with an atlas of charts to the plane so that the transition functions are translations. A surface built out of polygons in the plane with vertices removed and with edges glued in pairs by translations is naturally a translation surface. When finitely many polygons
are identified, cone singularities typically appear with cone angles in $2 \pi \Z$. When infinitely many isometric polygons are used you will often see infinite cone singularities as well, which is what we will see below.

\comref{Corollary 3.3 and above, page 15: the definition of translation surface is
too rushed. In particular, it seems to me that with your definition the
singularities are not part of the surface. Finally, could you add some
details to Corollary 3.3? It seems that two small sets in S that project to
the same set in Y would lift to the same set in the plane R 2 .}

\begin{corollary}
\label{cor:translation surface}
The surface $S$ is isometric to a translation surface.
\end{corollary}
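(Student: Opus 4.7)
The plan is to transfer a translation surface structure from $Y^\circ$ to $S$ via the covering map $\pi$ of Proposition \ref{prop:covering}. First I would observe that the torus $Y$ already carries a natural translation surface structure: since $Y$ is built by identifying opposite sides of the rhombus $R^0_\0 \subset \R^2$ via translations, the restriction of the inclusion $R^0_\0 \hookrightarrow \R^2$ to small open subsets of $Y^\circ$ yields an atlas whose transition functions are translations, and the resulting flat metric agrees with the one $Y^\circ$ inherits as a metric surface from $R^0_\0$.

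Next I would verify that $\pi$ is a local isometry, not merely a covering map. By construction in Proposition \ref{prop:covering}, $\pi$ maps the interior of each rhombus of $S$ isometrically to the interior of $R^0_\0$ in an edge-label preserving way. Across any glued edge, both $S$ (via rotations) and $Y^\circ$ (via translations) identify a pair of corresponding rhombus edges by an orientation-preserving Euclidean isometry. Since any two such isometries identifying a fixed pair of edges induce the same flat metric in a strip neighborhood of the glued edge, $\pi$ extends to a local isometry across every edge.

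Finally I would pull the translation atlas back through $\pi$. For each translation chart $(U,\phi)$ on $Y^\circ$ and each component $\tilde U$ of $\pi^{-1}(U)$, the composition $\phi \circ \pi|_{\tilde U}$ serves as a chart on $S$; since $\pi|_{\tilde U}$ is a bijective local isometry onto $U$, the transition function between two such pulled-back charts agrees locally with a transition function of the atlas on $Y^\circ$ and is therefore a translation. The resulting translation atlas equips $S$ with a translation surface structure whose flat metric coincides with the original metric on $S$. The only subtle point, and the one raised in the referee remark, is that distinct components of $\pi^{-1}(U)$ produce distinct charts on $S$ whose images in $\R^2$ coincide; this is harmless because a translation surface structure only requires a local atlas, not a global embedding, so no two small subsets of $S$ projecting to the same set in $Y^\circ$ need to be identified in the plane. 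Beyond this caveat, the argument is a routine unwinding of definitions and the substantive work is carried entirely by Proposition \ref{prop:covering}.
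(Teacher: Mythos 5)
Your proposal is correct and takes essentially the same route as the paper, whose entire proof is the observation that any cover of a flat (punctured) torus is naturally a translation surface, with the covering $\pi:S\to Y^\circ$ supplied by Proposition \ref{prop:covering}. Your writeup simply makes explicit the standard details the paper leaves implicit --- that $Y^\circ$ carries a translation atlas, that $\pi$ is a local isometry, and that pulling back charts yields translation transition functions --- including the harmless multivaluedness over a single set in $Y^\circ$.
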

\begin{proof}
Any branched cover of a flat torus is naturally a translation surface. Our surface $S$ covers the punctured torus $Y$; see Proposition \ref{prop:covering}.
\end{proof}

Figure \ref{fig:translation_surface} depicts
$S$ but with rhombi rotated and translated. Rhombi were rotated so that the rhombi differ from $R^0_\0$ by a translation respecting the edge labeling by 
$\{1,2,3,4\}$ as described earlier. Then the rhombi were translated so that they are organized into the picture above. Because edges are glued by translation,
this Figure represents an explicit presentation of a translation surface isometric to $S$. We abuse notation by identifying $S$ with this translation surface.
This is also the translation structure on $S$ obtained by pulling back the translation structure on $Y$ under the covering map $S \to Y$.

\begin{figure}
\begin{center}
\includegraphics[height=4in]{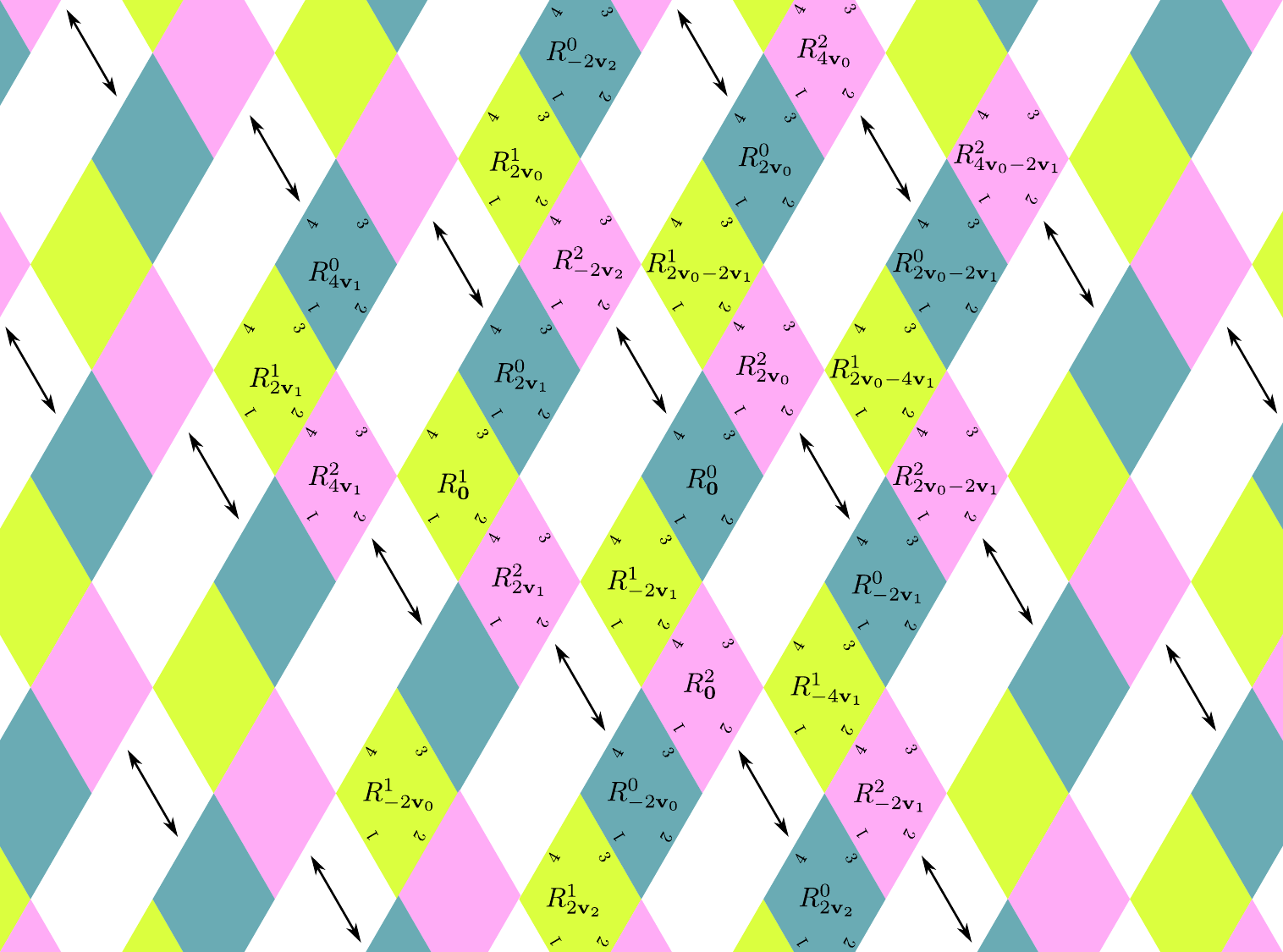}
\caption{The surface $S$ viewed as a translation surface. Here adjacent edges of rhombi are glued, as well as the edges connected by arrows. This, together with the fact that cylinders are formed from three rhombi in the directions parallel to the rhombi's sides, determines the surface.}
\label{fig:translation_surface}
\end{center}
\end{figure}

On a translation surface, the geodesic flow decomposes into natural invariant sets. For $\theta \in S^1$, the {\em straight-line flow} on $S$ is given in local coordinates by
$$F^s_\theta(\x)=\x+s(\cos \theta, \sin \theta).$$
The flow $F_\theta$ is said to be {\em completely periodic} if every non-singular trajectory of $F_\theta$ is periodic. For compact translation surfaces and covers of such surfaces, $F_\theta$ is completely periodic if and only if there is a decomposition of the surface into cylinders with geodesic core curves parallel to $\theta$.


We will now formalize the relationship between the refractive flow $T_{\theta,+}$ and the straight line flow $F_\theta$: these two flows are orbit equivalent.
A continuous {\em orbit equivalence} between two flows is a homeomorphism between the domains of the flows which carries trajectories to trajectories.

\begin{theorem}[Orbit equivalence]
\label{thm:orbit equivalence}
Fix an angle $\theta$ with $\frac{\pi}{3} \leq \theta \leq \frac{2\pi}{3}$. There is a continuous orbit equivalence
$\x:S \to X_{\theta,+}$ depending on $\theta$ from the straight-line flow $F_\theta^s:S \to S$
to the flow $T^t_{\theta,+}:X_{\theta,+} \to X_{\theta,+}$.
This orbit equivalence respects the orientations on trajectories provided by the flows. Furthermore, the orbit equivalence
carries the Lebesgue-transverse measure
on $S$ (transverse to the orbit foliation
of $F_\theta^s$) to the Lebesgue-transverse measure on $X_{\theta,+}$ (transverse to the orbit foliation of $T^t_{\theta,+}$). There is a constant $L$ so that
the restriction of $\x$ to any trajectory $\{F^s_\theta(p):s \in \R\}$ is an $L$-bilipschitz map
onto the trajectory $\{T^t_\theta \circ \x(p):t \in \R\}$ where the trajectories are endowed with metrics making these flows unit speed.
\end{theorem}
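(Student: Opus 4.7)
The plan is to construct $\x$ rhombus-by-rhombus, using the fact that each rhombus of $S$ is a copy of some $R^i_\c$ in the tiling, and then to check that the gluings \eqref{eq:gluings} precisely encode the geometry of Lemma \ref{lem:exit}. On the interior of the rhombus of $S$ identified with $R^i_\c$, I would define $\x(p)$ to be the unit tangent vector based at the corresponding point of $R^i_\c\subset\R^2$ and pointing in direction $\theta+\arg(\e_i)\pmod{2\pi}$. For $\theta\in[\tfrac{\pi}{3},\tfrac{2\pi}{3}]$ this yields the three permitted hexagonal directions $\{\theta,\theta+\tfrac{2\pi}{3},\theta+\tfrac{4\pi}{3}\}$ of Lemma \ref{lem:three-angles}, and the choice of rotation convention is made so that the trajectory moves counter-clockwise around the center of each hexagon, placing the image in $X_{\theta,+}$. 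On each rhombus interior $\x$ is thus a Euclidean isometry that intertwines the local models of the two flows; in particular it preserves the transverse Lebesgue measure there.

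The core step is to verify consistency across the edge identifications \eqref{eq:gluings}. A trajectory of $F^s_\theta$ that exits $R^i_\c$ through edge $3$ of $S$ re-enters $R^{i-1}_{\c+2\v_i}$ through edge $1$, and the identification is by a rotation of $-\tfrac{2\pi}{3}$ about the shared vertex. Lemma \ref{lem:exit}(1) says that in the tiling the corresponding refractive trajectory exits $R^i_\c$ through edge $3$, crosses an adjacent triangle, and enters $R^{i-1}_{\c+2\v_i}$ through edge $1$ at the point obtained by the same rotation, with its direction rotated by $-\tfrac{2\pi}{3}$. Since $\arg(\e_{i-1})-\arg(\e_i)=-\tfrac{2\pi}{3}$, the built-in $\arg(\e_i)$ shift in the definition of $\x$ exactly compensates for this direction rotation, so the pointwise identifications on the two sides of an edge agree. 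The analogous argument for edge $4\leftrightarrow$ edge $2$ uses Lemma \ref{lem:exit}(2). Thus $\x$ sends each $F_\theta$-orbit into a $T_{\theta,+}$-orbit with the orientation of time preserved.

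The main obstacle, which I expect to be the central technical point, is that an edge crossing on $S$ is instantaneous whereas its image traverses a full chord of an adjacent triangle in positive time. To promote the construction to a homeomorphism $S\to X_{\theta,+}$ and to establish the uniform $L$-bilipschitz bound along orbits, I would reparametrize each orbit of $F^s_\theta$ so that in a short interval of $s$ straddling an edge of $S$, the image sweeps continuously through the corresponding chord of the adjacent triangle; covering the triangle pieces of $X_{\theta,+}$ requires nothing more than packaging together these chord segments, since the refractive flow acts on a triangle by a Euclidean isometry. The ingredient one needs is a uniform estimate on the chord length: the chord in a unit equilateral triangle of a straight line whose direction makes angle $\theta\in[\tfrac{\pi}{3},\tfrac{2\pi}{3}]$ with a prescribed side is continuous in the entry point, bounded above by the triangle diameter $1$, and bounded below by a positive constant away from vertices of the tiling (which correspond to singularities excluded from both flows). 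Combined with the isometry on rhombus interiors, these uniform two-sided bounds on the ratio of times between $F_\theta$ and $T_{\theta,+}$ produce the constant $L$, and the transverse Lebesgue measure is preserved because the chord insertions happen in regions where the refractive flow is a Euclidean isometry.
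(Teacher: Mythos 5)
There is a genuine gap, and it lies in your model of what happens at an edge of $S$. Lemma \ref{lem:exit}(1) does \emph{not} say that a trajectory leaving $R^i_\c$ through edge $3$ crosses a triangle and immediately enters $R^{i-1}_{\c+2\v_i}$ through edge $1$: it says the trajectory crosses the triangle, enters the hexagon $H_{\c+2\v_i}$, and then \emph{moves through the hexagon} before reaching edge $1$ --- that is, it first traverses a chord of the intermediate rhombus $R^{i+1}_{\c+2\v_i}$ (explicitly $R^1_{2\v_0}$ in the paper's proof of the lemma), and it does so in direction $\theta+\arg(\e_{i-1})$, which is \emph{not} the direction $\theta+\arg(\e_{i+1})$ your map assigns to that rhombus. (Symmetrically, in the edge $4\to$ edge $2$ case the trajectory first crosses the neighboring rhombus of the \emph{same} hexagon in direction $\theta+\arg(\e_i)$ before entering the triangle.) This is not a cosmetic omission: for a fixed permitted direction, the counterclockwise vectors in a hexagon fill a half-hexagon bounded by the singular trajectory through the center, not a single rhombus, so $X_{\theta,+}$ over $H_\c$ has basepoint measure $\tfrac{3}{2}\Area(H_\c)$, while your rhombus-by-rhombus image (one direction per rhombus) has measure $\Area(H_\c)$. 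The missing positive-measure set consists exactly of the partial chords where a rhombus is crossed in one of its two non-assigned directions, and your proposed repair --- inserting only triangle chords at the edges of $S$ --- cannot recover it; your consistency check ``the pointwise identifications on the two sides of an edge agree'' conflates two vectors of $X$ separated by a triangle chord \emph{plus} a rhombus chord.

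The paper's proof repairs precisely this by abandoning the pointwise identification off a section: it takes the short diagonals as sections, $\Delta_S\subset S$ and $\Delta_T\subset X_{\theta,+}$ (vectors on short diagonals at angle $\theta$ to the outward-oriented diagonal --- on these sections your definition of $\x$ agrees with the paper's and does carry transverse Lebesgue measure to transverse Lebesgue measure), verifies via Lemma \ref{lem:exit} that first returns correspond to first returns, and extends $\x$ by rescaling time \emph{affinely} between consecutive section hits. The refractive return segment is the $S$-segment with a passage through an equilateral triangle and its reflection inserted, giving $s_+(p)-s_-(p)\le t_\ast\le 2+s_+(p)-s_-(p)$; since for $\theta\in[\frac{\pi}{3},\frac{2\pi}{3}]$ the $S$-return time is uniformly bounded above and below (the rise between consecutive horizontal diagonals is $\sqrt{3}$), the affine factor is uniformly bounded, yielding $L$. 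Note also that your appeal to a chord length ``bounded below by a positive constant away from vertices'' is unavailable --- nonsingular trajectories pass arbitrarily close to vertices, so chords can be arbitrarily short --- but no lower bound on inserted lengths is needed: only the upper bound on the insertion and the lower bound on return times matter. Your directional bookkeeping ($\arg(\e_{i-1})-\arg(\e_i)=-\frac{2\pi}{3}$ compensating the refraction rotation, and the counterclockwise placement in $X_{\theta,+}$) is correct; the error is solely the mis-description of the inter-edge refractive segment, and it invalidates both surjectivity and the continuity/gluing argument as written.
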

\begin{proof}
Fix $\theta$ as in the theorem and consider the straight-line flow $F^s_\theta$ on $S$ in direction $\theta$. Let $\Delta_S \subset S$ denote the union of the short diagonals of the rhombi making up $S$. These diagonals are horizontal when viewing $S$ as a translation surface; see Figure \ref{fig:translation_surface}.
Observe that $\Delta_S$ is a section for the flow straight-line flow $F^s_\theta$ on $S$. Given a point $p \in S \smallsetminus \Delta_S$, we associate a positive and a negative number:
$$s_+(p)=\min \{s>0:~F^s_\theta(p)\in \Delta_S\}
\quad \text{and} \quad
s_-(p)=\max \{s<0:~F^s_\theta(p)\in \Delta_S\}.$$

Now consider the flow $T_{\theta,+}:X_{\theta,+} \to X_{\theta,+}$. Recall from the introduction that $X_{\theta,+}$ is refractive flow-invariant subset of the unit tangent bundle of the plane; see the introduction.
We define $\Delta_T \subset X_{\theta,+}$ to be those unit tangent vectors based on the short diagonal of one of our rhombi and traveling in a direction making angle of $\theta$ with diagonal oriented outward from the center of the containing hexagon (corresponding to the horizontal direction on $S$). In light of Lemma \ref{lem:exit},
we see that $\Delta_T$ forms a section for the flow.

Now we will define the orbit equivalence
$\x:S \to X_{\theta,+}$. We define $\x(p)$ by cases.
First suppose that $p \in \Delta_S$. Then $p$ is some point on some short diagonal of a rhombus in $S$. The same rhombus is also a subset of a hexagon in the plane. We define $\x(p)$ to be the unit tangent vector based at the corresponding point in the plane with a direction which makes an angle of $\theta$ with the short diagonal oriented outward from the center of the hexagon. Observe that with this definition of $\x$, the map sends the Lebesgue transverse measure on
$\Delta_S$ to the Lebesgue transverse measure on $\Delta_T$. Now suppose that
$p \in S \smallsetminus \Delta_S$. Then the points
$$
p_-=F_\theta^{s_-(p)}(p)
\quad \text{and} \quad
p_+=F_\theta^{s_+(p)}(p)
$$
lie in $\Delta_S$. Define
$\x_-=\x(p_-)$ and $\x_+=\x(p_+)$ using the first case
defined above. Then $\x_-, \x_+ \in \Delta_T$.
Observe that the $p_+$ is the first return of $p_-$ to $\Delta_S$, and by Lemma \ref{lem:exit} $\x_+$ is the first return of $\x_-$ to $\Delta_T$. The quantity $s_+(p) - s_-(p)$ represents the amount of time it takes $p_-$ to reach $p_+$.
Similarly there is some time $t_\ast$ representing the time it takes $\x_-$ to reach $\x_+$. We define:
$$\x(p)=T^t(\x_-) \quad \text{where} \quad
t=\frac{-t_\ast  s_-(p)}{s_+(p) - s_-(p)}.$$
This completes the definition of $\x:S \to X_{\theta,+}$.
Observe that on the orbit segment from $p_-$ to $p_+$, we have rescaled time affinely by a factor of $r=\frac{t_\ast}{s_+(p)-s_-(p)}$, i.e.:
$$\x \circ F^s_\theta(p)=T^{rs}_{\theta,+}\circ \x(p) \quad
\text{for all $s \in [s_-(p),s_+(p)]$}.$$
See Figure \ref{fig:time change} for an illustrated example. We have that $s_+(p)-s_-(p) \leq t_\ast \leq 2+s_+(p)-s_-(p)$ because by Lemma \ref{lem:exit}
the portion of the trajectory through $\x(p)$ between returns to $\Delta_T$ is formed by adding a passage through an equilateral triangle and its reflection (also see Figure \ref{fig:pinwheel}). This proves that the deformation in the flow direction is bilipschitz and that the constant can be taken to be independent of $p$.
\end{proof}

\begin{figure}
\begin{center}
\includegraphics[width=\textwidth]{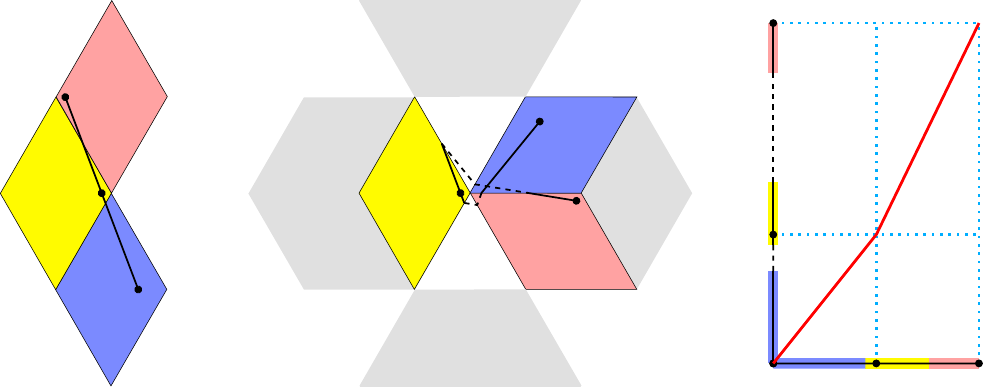}
\caption{Left: A trajectory on $S$. Center: The corresponding trajectory in the tiling. Right: The graph of the time change.}
\label{fig:time change}
\end{center}
\end{figure}

\begin{proof}[Proof of Theorem \ref{thm:bounded implies periodic} (a)]
Consider a refractive trajectory and suppose it is bounded and non-singular. Up to a symmetry of the tiling, by Proposition \ref{prop: standardization} we may assume that we are considering an orbit of $T_{\theta,+}$
for some $\theta \in [\frac{\pi}{3}, \frac{2 \pi}{3}]$. The preimage of our trajectory under the orbit equivalence of Theorem \ref{thm:orbit equivalence} is a bounded non-singular straight-line trajectory on $S$.

Since the trajectory on $S$ is bounded, it only visits finitely many rhombi making up the surface. We can then build a closed (compact) translation surface $S'$ using only the rhombi in $S$ which our trajectory intersects. We need to specify the edge gluings for $S'$. If both of two adjacent rhombi in $S$ are included in $S'$ then we glue them together in the same way. This will leave some edges unglued and since the trajectory does not cross these edges, we can glue them together in an arbitrary way ensuring that we get a translation surface. With this definition of $S'$ our trajectory also represents a trajectory on $S'$.

We now use the following basic fact: The closure of a non-periodic and non-singular straight-line trajectory on a flat surface is either the full translation surface or a subsurface bounded by saddle connections in the direction of the trajectory \cite{FK} \cite[Proof of Theorem 1.8]{masur tabachnikov}.
Our trajectory in $S'$ is not dense in $S'$, so it is either periodic (Case I), or dense in a subsurface of $S'$ bounded by saddle connections parallel to the trajectory (Case II).

Case I is our desired conclusion, so assume by contradiction that Case II holds, i.e., our trajectory is dense in a subsurface of $S'$ bounded by parallel saddle connections.
The surface $S'$ is a finite cover of the torus built from one of our rhombi. Thus, all saddle connections are parallel to a vector in $\Lambda$, and in addition, straight-line flows on $S'$ in directions in $\Lambda$ are completely periodic since $S'$ is a finite branched cover of $\R^2/\Lambda$.
It follows that our trajectory on $S'$ is parallel to a vector in $\Lambda$ and is therefore periodic. This contradicts the density of our trajectory in a subsurface and rules out Case II.
\end{proof}

\section{Hidden symmetries}\label{hidden symmetries}

\subsection{Background on symmetries of translation surfaces}
A {\em translation automorphism} of a translation surface is a homeomorphism from the surface to itself which preserves the translation structure. A homeomorphism is a translation automorphism if and only if it acts as a translation in local coordinate charts. The collection of all translation automorphisms of a surface form a group $\Trans(S)$.

An {\em affine automorphism} of a translation surface $S$ is a homeomorphism from the surface to itself which preserves the affine structure underlying the translation surface structure. In other words, the homeomorphism must act affinely in local coordinates. In a connected translation surface, this means that there is a matrix $M \in \GL(2, \R)$ such that in local coordinate charts the homeomorphism $h$ has the form
$$h\left(\begin{array}{r} x \\ y\end{array}\right)=M\left(\begin{array}{r} x \\ y\end{array}\right)+\left(\begin{array}{r} c_0 \\ c_1\end{array}\right),$$
where $(c_0,c_1)$ is a vector that depends on the charts. The matrix $M$ is independent of the chart, and we call this the {\em derivative} of the affine automorphism, $D(h)$.

The group of affine automorphisms of a translation surface forms a group $\Aff(S)$, and the group $V(S)=D\big(\Aff(S))\subset \GL(2,\R)$ is the {\em Veech group} of $S$. We observe that $\Trans(S)$ is a normal subgroup
of $\Aff(S)$ since it is the kernel of the derivative homomorphism $D:\Aff(S) \to \GL(2,\R)$, and $V(S)$ is isomorphic
to the quotient $\Aff(S)/\Trans(S)$.

A cylinder $C \subset S$ is a subset of a translation surface isometric to $\R/c\Z \times (0,h)$. We call the constant $c$ the {\em circumference} of the cylinder and $h$ the {\em height}. The {\em inverse modulus} of $C$ is the ratio $c/h$. Core curves in $\R/c\Z \times (0,h)$ have the form $\R/c\Z \times \{y\}$ for some $y \in (0,h)$. These are closed straight-line trajectories on the surface and we say the {\em direction} of the cylinder is the direction of travel of these trajectories in the projectivization $P \R^2$. A {\em decomposition} of $S$ into cylinders is a collection of disjoint cylinders $\{C_i\}$ with the same direction so that the collection of closures of cylinders covers $S$.

\begin{proposition}[Thurston \cite{thurston}]
\label{prop:dehn twist}
Suppose that a translation surface $S$ admits a decomposition into cylinders in the direction of the unit vector $\u$ where all cylinders have the same inverse modulus, $\lambda$. Then there is an affine automorphism $\phi$ of $S$ which performs a single right Dehn twist in all cylinders in the decomposition and
$$D(\phi)=R \circ \twotwo 1\lambda01 \circ R^{-1}$$
where $R$ is the rotation carrying the vector $(1,0)$ to $\u$.
\end{proposition}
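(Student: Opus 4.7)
The plan is to reduce to the horizontal case $\u = (1,0)$ and then build $\phi$ explicitly cylinder-by-cylinder. The key analytic fact is that when all inverse moduli equal $\lambda$, the single horizontal shear with matrix $\twotwo{1}{\lambda}{0}{1}$ simultaneously performs a right Dehn twist on every cylinder \emph{and} restricts to the identity on each cylinder boundary, which is precisely what is needed for the local pieces to glue.

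First I would handle the reduction. Let $S'$ denote the translation surface obtained from $S$ by post-composing each coordinate chart with $R^{-1}$, so the direction $\u$ on $S$ becomes the horizontal direction on $S'$. The underlying topological surface is unchanged, $\Aff(S) = \Aff(S')$ as groups of self-homeomorphisms, and derivatives transform by conjugation: $D_S(h) = R \circ D_{S'}(h) \circ R^{-1}$. Thus it suffices to produce an affine automorphism of $S'$ with derivative $\twotwo{1}{\lambda}{0}{1}$ that Dehn-twists each horizontal cylinder once.

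Assume now $\u = (1,0)$ and the cylinders $\{C_i\}$ are horizontal. On each $C_i$ I choose an isometric identification with $\R/c_i\Z \times (0, h_i)$ compatible with the translation structure and define
$$\phi_i(x, y) = \big(x + \lambda y \bmod c_i,\ y\big).$$
This lifts to the affine shear with derivative $\twotwo{1}{\lambda}{0}{1}$ on the universal cover $\R \times (0, h_i)$, so $\phi_i$ is an affine automorphism of $C_i$. Because $\lambda h_i = c_i$, the extra translation $x \mapsto x + \lambda y$ reduces to $0$ modulo $c_i$ at both $y = 0$ and $y = h_i$, so $\phi_i$ extends continuously to the closed cylinder and restricts to the identity on both boundary components; tracking the image of a vertical arc shows it is a single right Dehn twist.

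The crux is gluing. The closures $\overline{C_i}$ cover $S$ by hypothesis, and any two overlap only along (unions of) horizontal saddle connections in their shared boundary. Since every $\phi_i$ restricts to the identity on $\partial C_i$, the pieces agree on overlaps and assemble into a homeomorphism $\phi : S \to S$. It remains to verify that $\phi$ is affine across the shared boundary saddle connections: a small neighborhood of an interior point of such a saddle connection is isometric to a disk in $\R^2$ bisected by a horizontal segment, and on each half $\phi$ is the affine map with derivative $\twotwo{1}{\lambda}{0}{1}$ fixing the segment pointwise. Two affine maps with the same derivative that agree on a line segment coincide on the whole disk, so $\phi$ is a single affine map on this neighborhood. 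Hence $\phi \in \Aff(S)$ with $D(\phi) = \twotwo{1}{\lambda}{0}{1}$, and conjugating back by $R$ yields the stated derivative. The only real obstacle — minor but genuinely using the hypothesis — is this local-to-global affineness check: the equality of inverse moduli is exactly what makes the derivatives on the two sides of a shared boundary match, so that $\phi$ is affine across it rather than merely piecewise affine.
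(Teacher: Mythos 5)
Your proof is correct, and it is precisely the standard argument behind this fact, which the paper does not reprove but simply quotes from Thurston: shear each horizontal cylinder by $\twotwo 1\lambda01$, use the equal inverse moduli (so that $\lambda h_i = c_i$) to see the shear is the identity on each boundary circle, glue, and verify affineness across the horizontal saddle connections via uniqueness of an affine map with given derivative fixing a segment pointwise. Nothing is missing; your reduction to the horizontal case by rotating charts and the local-to-global check at shared boundaries are exactly the points where the hypothesis is used, matching the canonical proof.
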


\subsection{An abelian covering}
The subgroup $2 \Lambda \subset \R^2$ acts on the plane by translation and preserves the tiling. It also preserves the collection of rhombi. Indeed, we get an action of $2\Lambda$ on rhombi defined so that for $\w \in 2\Lambda$,
\begin{equation}
\label{eq:tau}
\tau_\w(R^i_\c)=R^i_{\c+\w} \quad \text{for all $i \in \{0,1,2\}$ and all $\c \in 2 \Lambda$.}
\end{equation}
Furthermore $\tau$ preserves edge labels. It follows:

\begin{proposition}
The action $\tau$ of $2 \Lambda$ on rhombi induces an action of $2\Lambda$
on $S$ by translation automorphisms.
\end{proposition}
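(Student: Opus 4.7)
The plan is to extend the rhombus-level permutation $\tau$ to a genuine homeomorphism of $S$ by checking compatibility with the edge gluings, and then to recognize the resulting map as a deck transformation of the covering $\pi : S \to Y^\circ$ from Proposition \ref{prop:covering}, which automatically makes it a translation automorphism.

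First I would define $\hat\tau_\w$ on each open rhombus $R^i_\c \subset S$ as the unique label-preserving Euclidean isometry onto $R^i_{\c+\w}$ (equivalently, translation by $\w$ in the ambient plane). To see that these piecewise definitions descend to a well-defined continuous map on $S$, I would verify that $\hat\tau_\w$ preserves each of the four gluing rules in \eqref{eq:gluings}. For instance, the identification $E_1(R^i_\c) \to E_3(R^{i+1}_{\c - 2\v_{i+1}})$ is carried by $\hat\tau_\w$ to $E_1(R^i_{\c+\w}) \to E_3(R^{i+1}_{\c + \w - 2\v_{i+1}})$, which is again an instance of the same rule applied at the lattice point $\c + \w \in 2\Lambda$. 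The remaining three rules check identically. This compatibility is the only substantive verification; once it is in place, $\hat\tau_{-\w}$ serves as a two-sided inverse, so $\hat\tau_\w$ is a homeomorphism.

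Next I would observe that, since $\pi$ carries every $R^i_\c$ to the single rhombus underlying $Y^\circ$ by the same label-preserving isometry, we have $\pi \circ \hat\tau_\w = \pi$, so $\hat\tau_\w$ is a deck transformation of the covering $\pi$. Because the translation structure on $S$ is the pullback of the translation structure on $Y^\circ$ (Corollary \ref{cor:translation surface}), in any local chart of the form $\phi = \pi|_U$ the identity $\pi \circ \hat\tau_\w = \pi$ forces the derivative of $\hat\tau_\w$ to be the identity matrix; equivalently, $\hat\tau_\w$ acts as a pure translation in every local chart of the translation atlas on $S$. Finally, the identity $\hat\tau_{\w_1} \circ \hat\tau_{\w_2} = \hat\tau_{\w_1 + \w_2}$ is inherited directly from the corresponding identity for $\tau$ on rhombi, so $\w \mapsto \hat\tau_\w$ is an action of $2\Lambda$ on $S$ by translation automorphisms. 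The main obstacle is simply the bookkeeping of the gluing check; once that is done the translation-automorphism property is formal from the covering picture.
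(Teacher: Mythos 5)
Your proof is correct and follows essentially the same route as the paper's: the substantive step in both is checking that translation by $\w \in 2\Lambda$ preserves the edge gluings in \eqref{eq:gluings}, after which label preservation forces the map to be a translation in local coordinates. Your deck-transformation framing via $\pi \circ \hat\tau_\w = \pi$ and the pullback structure from $Y^\circ$ merely makes explicit what the paper's two-sentence proof leaves implicit.
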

\begin{proof}
Observe that translation by $\w \in 2 \Lambda$ preserves edge gluings; see (\ref{eq:gluings}). Since labels of edges are respected, each automorphism acts as a translation in local coordinates of the translation surface.
\end{proof}

Define $Z^\circ=S/2 \Lambda$. Since the $\tau$-orbit of each $R^i_\c$ consists of all $R^i_\w$ with $\w \in 2\Lambda$, the surface $Z^\circ$ consists of three rhombi with edges identified and vertices removed. Topologically $Z^\circ$ is a torus punctured at three points (points appearing as vertices of rhombi). We define $Z$ to be the torus obtained by adding the three points to the surface. See Figure \ref{fig:tori}.

By construction, $\pi:S \to Z^\circ$ is a regular cover with deck group $2 \Lambda$. Since the cover is regular, we can define the {\em monodromy homomorphism} from the fundamental group. We choose a basepoint on $z_0 \in Z^\circ$ and a basepoint on $s_0 \in S$ so that $\pi(s_0)=z_0$. Given a loop $\gamma$ based at $z_0$, we can lift to a curve $\tilde \gamma$ starting at $s_0$ and terminating at a point $h(\gamma)(z_0)$ where $h(\gamma)$ is an element of the deck group. Because our deck group $2\Lambda$ is abelian, the monodromy homomorphism is well defined as a map on homology,
\begin{equation}
\label{eq:h}
h:H_1(Z^\circ; \Z) \to 2 \Lambda.
\end{equation}

We will now provide an explicit description of the map $h$. Let $\Sigma \subset Z$ be the three points of $Z \smallsetminus Z^\circ$. Recall that algebraic intersection number gives a
non-degenerate pairing
$$\cap:H_1(Z,\Sigma; \Z) \times H_1(Z^\circ; \Z) \to \Z.$$
\begin{proposition}
\label{prop:monodromy}
Let $\eta_0, \eta_1 \in H_1(Z,\Sigma; \Z)$ be the two relative homology classes depicted on the right side of Figure \ref{fig:torus fundamental group}. We have
$$h(\gamma)=2(\eta_0 \cap \gamma)\v_0+2(\eta_1 \cap \gamma)\v_1,$$
where $\v_0$ and $\v_1$ are as in \eqref{eq:e}.
\end{proposition}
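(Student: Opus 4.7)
The plan is to show that both sides of the claimed identity define homomorphisms from $H_1(Z^\circ;\Z)$ to $2\Lambda$, and then verify the identity on a convenient generating set. The left side $h$ is a homomorphism because the deck group $2\Lambda$ is abelian and $\pi:S \to Z^\circ$ is a regular cover, and the right side $\gamma \mapsto 2(\eta_0 \cap \gamma)\v_0 + 2(\eta_1 \cap \gamma)\v_1$ is a homomorphism by the $\Z$-bilinearity of the intersection pairing in its second slot. Since $Z^\circ$ is a three-punctured torus we have $H_1(Z^\circ;\Z) \cong \Z^4$, and a convenient generating set consists of one transversely oriented loop crossing each of the six edges of $Z^\circ$, together with small loops around the three punctures (which both sides will kill).

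First I would compute $h$ combinatorially from the gluing rules \eqref{eq:gluings}. The quotient $Z^\circ$ has exactly six edges, obtained by pairing up the twelve edges of $R^0_\0 \cup R^1_\0 \cup R^2_\0$ modulo the $2\Lambda$ shifts. For each such edge the corresponding identification in $S$ retains a definite shift of the form $\pm 2\v_i$ for some $i$; for instance, crossing the edge $E_1(R^i_\0) \sim E_3(R^{i+1}_{-2\v_{i+1}})$ from the $R^i$ side contributes $-2\v_{i+1}$. For any loop $\gamma$ in $Z^\circ$ meeting these edges transversely, $h(\gamma)$ is then the signed sum of these shifts over edge-crossings of $\gamma$. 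Using $\v_0 + \v_1 + \v_2 = 0$ to eliminate $\v_2$, this sum rewrites uniquely as $2 A(\gamma) \v_0 + 2 B(\gamma) \v_1$, where $A(\gamma)$ and $B(\gamma)$ are explicit signed counts of certain edge crossings.

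Next I would match $A(\gamma) = \eta_0 \cap \gamma$ and $B(\gamma) = \eta_1 \cap \gamma$ by inspection of Figure~\ref{fig:torus fundamental group}. The arcs $\eta_0, \eta_1$ shown there are concrete relative $1$-chains in $(Z,\Sigma)$ with endpoints at the punctures, and their transverse intersections with the edges of $Z^\circ$ are finite and directly computable; edge by edge one checks that $\eta_0$ crosses precisely the edges whose shifts contribute to the $\v_0$-coefficient, with matching signs, and analogously for $\eta_1$ and $\v_1$. For the loops around punctures, both sides vanish: a small loop around a puncture of $Z^\circ$ lifts to a loop in $S$ around one of the removed vertices (so $h$ returns $\0$), and $\eta_0, \eta_1$ may be drawn disjoint from a small punctured neighborhood of every puncture (so the intersection numbers are $0$).

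The main obstacle will be the combinatorial sign bookkeeping: one has to fix consistent orientations for the six edges of $Z^\circ$, for the two arcs $\eta_0, \eta_1$, and for each generating loop, so that the shift table extracted from \eqref{eq:gluings} matches the signed intersection numbers visible in Figure~\ref{fig:torus fundamental group}. Once these conventions are locked down, the identity reduces to a routine finite check on the six generators, requiring no further geometric input.
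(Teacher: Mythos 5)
Your computational engine is sound, and your sample shift is correct: reading \eqref{eq:gluings} as a $2\Lambda$-valued cocycle, crossing $E_1(R^i)$ from the $R^i$ side contributes $-2\v_{i+1}$, and similarly $E_2$, $E_3$, $E_4$ contribute $+2\v_i$, $+2\v_i$, $-2\v_{i-1}$; summing these signed shifts over the edge crossings of a loop does compute $h$, and this is exactly the paper's ``develop the curves into the tiling'' method in combinatorial form. (The paper then verifies the identity by linearity on the explicit basis $\{\alpha,\beta_0,\beta_1,\beta_2\}$ of Figure \ref{fig:torus fundamental group}, finding $h(\alpha)=\0$, $h(\beta_i)=-2\v_i$, and comparing with the stated intersection numbers.) But your verification plan contains a genuine error: both of your claims about the punctures are false. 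The classes $\eta_0,\eta_1$ are \emph{relative} classes with non-trivial boundary --- indeed Proposition \ref{prop:six special classes} depends on $\delta\eta_i$ having coefficients $\{-1,0,1\}$, each appearing once --- so every representative terminates at points of $\Sigma$ and cannot be pushed off neighborhoods of all three punctures; a small loop around a puncture at which $\eta_i$ ends pairs with it to $\pm 1$, not $0$. Correspondingly, $h$ does not kill puncture loops either: the vertices of $S$ have infinite cone angle (the paper notes that infinitely many rhombi are identified at a single vertex), so a small loop around a puncture of $Z^\circ$ does not lift to a closed loop in $S$. You can see this from your own shift table: the link of one of the three punctures crosses, in cyclic order, $E_1(R^0)$, $E_4(R^1)$, $E_3(R^0)$, $E_2(R^2)$, giving $h=-2\v_1-2\v_0+2\v_0+2\v_2=2(\v_2-\v_1)\neq\0$. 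So the checkpoint ``both sides vanish on puncture loops,'' on which your generating-set argument leans, is simply wrong, and the finite check as you designed it would come out inconsistent at precisely these generators; the two sides can only agree on puncture loops through non-vanishing values.

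There is also a secondary bookkeeping problem: by \eqref{eq:gluings} every edge of $Z^\circ$ separates $R^i$ from $R^{i\pm 1}$ (the dual graph of the three-rhombus cell structure has no self-loops), so a loop crossing a single edge exactly once does not exist, and an unspecified choice of ``one loop per edge'' need not generate the rank-four group $H_1(Z^\circ;\Z)$. The repair is either to do what the paper does --- verify on the honest basis $\{\alpha,\beta_0,\beta_1,\beta_2\}$, computing $h$ on it by your edge-shift method and reading the intersection numbers with $\eta_0,\eta_1$ from the figure --- or, if you want your crossing-by-crossing matching of the two cocycles, to first isotope $\eta_0,\eta_1$ onto edges of the cell structure (their endpoints lie at punctures, as edge endpoints do; compare the paper's later use of exactly this move in the proof of Theorem \ref{thm:bounded implies periodic}) and then handle the puncture loops honestly rather than asserting that both sides vanish on them.
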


\begin{figure}
\begin{center}
\includegraphics[width=\textwidth]{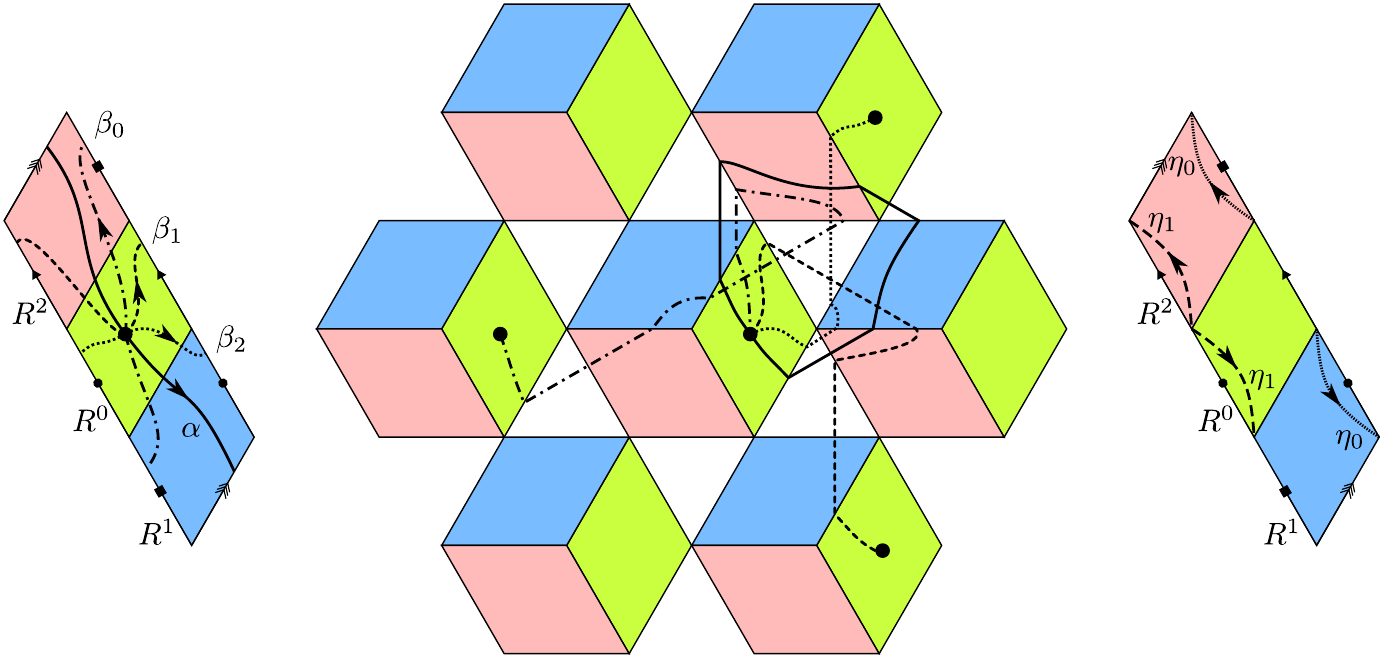}
\caption{Left: Curves generating the fundamental group of $Z^\circ$. Center: These same curves on the tiling with line segments joining the arcs within rhombuses according to the gluings defining $S$. Right: The relative homology classes $\eta_0$ and $\eta_1$.}
\label{fig:torus fundamental group}
\end{center}
\end{figure}

\begin{proof}
By linearity, it suffices to check the equation for
the basis $\{\alpha, \beta_0, \beta_1, \beta_2\}$
of $H_1(Z^\circ; \Z)$ which consists of the closed curves
in $Z^\circ$ depicted in the left side of Figure \ref{fig:torus fundamental group}. First we need to
see what $h$ does to this basis. For this, we need to lift the curves to $S$ and find a deck group element for each curve as noted above. Since the deck group was defined using the tiling, we lift the curves to $S$, and then carry the curves to the tiling using the rhombi. By definition of $S$, this is equivalent to developing the curves into the tiling and whenever you cross an edge you use the edge gluings of $S$ to decide how to develop across the edge. This is carried out in the center of Figure \ref{fig:torus fundamental group}. We find that
\begin{equation}
\label{eq:h on basis}
h(\alpha)=\0 \quad \text{and} \quad h(\beta_i)=-2 \v_i \quad \text{for $i \in \{0,1,2\}$}.
\end{equation}
This evaluates the left side of the equation in the proposition on the basis.

The right side of the equation in the proposition involves
intersection numbers with the classes $\eta_0$ and $\eta_1$. Observe that $\alpha$ does not intersect these classes, while
$$\eta_0 \cap \beta_0=\eta_1 \cap \beta_1=-1,
\quad
\eta_0 \cap \beta_1=\eta_1 \cap \beta_0=0,
\quad \text{and} \quad
\eta_0 \cap \beta_2=\eta_1 \cap \beta_2=1.$$
Using these algebraic intersection numbers to evaluate the expression on the right side of the equation yields the same results as \eqref{eq:h on basis}.
\end{proof}

\begin{proof}[Proof of Theorem \ref{thm:bounded implies periodic} (b)]
\comref{The proof of Theorem 1.8 (b) at page 19 seems to assume the, not yet
proven, Theorem 1.6. If this is the case this should be stated clearly. In
the proof of Theorem 1.8 (b) you say that “Rhombi with the same center
c form a cylinder.” This is not true for all directions, for instance not for
the ones generating the lattice, and seems not so apparent in general,
could you clarify this?}
Consider a non-singular trajectory of $T$ which we may take to lie in $X_{\theta,+}$ for some $\theta$ with $\frac{\pi}{3} \leq \theta \leq \frac{2\pi}{3}$. It is an elementary exercise to show that the linear drift rate is zero for a periodic trajectory and positive for a drift-periodic trajectory.
So we can assume $\theta$ is not parallel to a vector in $\Lambda$. (This is a consequence of the fact that the flow is semi-conjugate to straight-line flow on $S$ and $S$ is a periodic cover of the torus $Y$ of Figure \ref{fig:tori}. It is an elementary observation that a unit vector in $\R^2$ is a parallel to a vector in $\Lambda$ if and only if the straight-line flow in that direction is periodic on $Y$. Thus all trajectories on $S$ in these directions are periodic or drift periodic. \compat{This is in response to referee's concerns on this depending on Theorem 1.6.})

By Theorem \ref{thm:orbit equivalence}, our trajectory is the image under a bi-Lipschitz orbit equivalence $\x:S \to X_{\theta,+}$ of a straight-line trajectory in $S$.
\compat{I had erroneously assumed that $R^0_\c \cup R^1_\c \cup R^2_\c$ formed a cylinder. Corrected:}
Any $\c \in 2 \Lambda$ determines
a cylinder on $S$ in direction $v_1$ namely $C_\c=R^0_\c \cup R^1_{\c+2\v_0} \cup R^2_{\c-2\v_2}$ on $S$; see the left side of Figure \ref{fig:cylinder}. The collection of such cylinders $\{C_\c:~ \c \in 2\Lambda\}$ forms a cylinder decomposition in direction $\v_1$. Such cylinders are all isometric and parallel and so are crossed in constant time (depending on $\theta$).
Observe that there is a uniform upper bound on the distance from $\x(s)$ to $\c$ where $\c \in 2\Lambda$ is defined so that $\x(s) \in C_\c$. (This holds by definition of the orbit equivalence.)
Let $C_{\c_n}$ be the sequence of cylinders crossed by the $T$-orbit of $\x(s_0)$ for some $s_0 \in S$. Taken all together, we see that
\begin{equation}
\label{eq:equivalence}
\lim_{t \to \infty} \frac{\big|T^t\big(\x(s_0)\big)\big|}{t}=0
\quad \text{if and only if} \quad
\lim_{n \to \infty} \frac{|\c_n|}{n}=0.
\end{equation}

Consider the projection of the $F_\theta$-trajectory of $s_0$ to the punctured torus $Z^\circ$. The sequence of centers $\c_n$ can be recovered by intersecting increasing segments of this trajectory with the curves $\eta_0$ and $\eta_1$ of Figure \ref{fig:torus fundamental group}. This is the content of Proposition \ref{prop:monodromy}.
Note that the curves can be moved onto the boundary of the cylinder (the negatively sloped boundary edges of the rhombi). Consider the return map of flow in direction $\theta$ to the union of negative sloped boundaries of rhombi making up $Z$. From assumptions in the first paragraph, this is an irrational rotation.
Then $\c_n$ is determined by Birkhoff averages of two functions which take the values $1$, $-1$ and zero on intervals each making up one third of the circle. (These functions come from the direction in which the curves $\eta_0$ and $\eta_1$ move over the boundary components.)
Since the functions have zero integral, their time average value is zero in the sense of the Birkhoff ergodic theorem. This verifies that the right side of
\eqref{eq:equivalence} is true. The left side is the desired conclusion.
\end{proof}

\subsection{Lifting affine automorphisms}
Since $Z^\circ$ is ``parallelogram-tiled,''
it has a lattice Veech group. This guarantees there are many affine automorphisms of $Z^\circ$. We now consider which of these lift to $S$. It turns out they all lift:

\begin{lemma}
\label{lem:lifting}
Let $\pi:S \to Z^\circ$ denote the covering map.
For any affine automorphism $f:Z^\circ \to Z^\circ$,
there is an affine automorphism $\tilde f:S \to S$
so that $f \circ \pi=\pi \circ \tilde f$.
\end{lemma}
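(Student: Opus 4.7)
The plan is to apply the standard lifting criterion from covering space theory. Since $\pi: S \to Z^\circ$ is a regular cover with abelian deck group $2\Lambda$ and monodromy $h$ (Proposition \ref{prop:monodromy}), a continuous map $f: Z^\circ \to Z^\circ$ lifts to $\tilde f: S \to S$ if and only if $f_*(\ker h) \subset \ker h$ on $H_1(Z^\circ;\Z)$; the basepoint ambiguity is absorbed by the regularity of the cover. Once a continuous lift $\tilde f$ exists, it is automatically an affine automorphism with derivative $Df$, because in local charts compatible with the covering projection (where $\pi$ is a local translation), $\tilde f$ equals $f$ composed with local translations, and composing an affine map with translations preserves affineness and the derivative.

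The remaining task is to show $f_*(\ker h) \subset \ker h$. I would approach this by establishing the commutation identity
$$h \circ f_* \;=\; (Df) \circ h \qquad \text{as maps } H_1(Z^\circ; \Z) \to 2\Lambda,$$
from which the desired inclusion is immediate. Because $f$ is affine with bounded derivative, it extends continuously to an automorphism of the closed torus $Z$, which permutes the three marked points of $\Sigma$ by some $\sigma \in S_3$. I would verify the identity on the basis $\{\alpha, \beta_0, \beta_1, \beta_2\}$ of Proposition \ref{prop:monodromy}. Using the description of $h$ via intersection numbers with the relative classes $\eta_0, \eta_1 \in H_1(Z, \Sigma; \Z)$, together with the naturality of the intersection pairing under $f_*$, the identity reduces to a statement comparing how $f$ moves punctures to how $Df$ moves the distinguished lattice vectors: specifically, that $Df$ carries $\v_i$ to $\pm \v_{\sigma(i)}$. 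This in turn follows from the observation that $\v_0, \v_1, \v_2$ are canonically attached to the points of $\Sigma$ as the short diagonals of the rhombi of $Z^\circ$ emanating from each marked point—geometric data intrinsic to the translation surface structure and therefore preserved by any affine automorphism via its derivative.

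The main obstacle is this careful coordination between the topological action of $f_*$ on a basis of $H_1(Z^\circ; \Z)$ and the linear action of $Df$ on the lattice $2\Lambda$. In particular, one must verify separately that $Df$ preserves $2\Lambda$, so that the identity $h \circ f_* = (Df) \circ h$ makes sense as an equality of maps into $2\Lambda$. This follows from the same geometric principle: the vertex set $\Sigma$ lifts to a $2\Lambda$-invariant subset of $\R^2$ under the developing map of $Z$, and any affine automorphism of $Z^\circ$ preserves $\Sigma$, forcing $Df \in GL(2\Lambda)$. Once the commutation is checked on $\alpha$ (trivially, since $h(\alpha)=\0$) and on each $\beta_i$ via the puncture-permutation argument, the conclusion $f_*(\ker h) \subset \ker h$ follows by linearity, producing the required affine lift $\tilde f$.
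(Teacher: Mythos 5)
Your framework---the lifting criterion for the regular cover $\pi:S\to Z^\circ$, the reduction to $f_*(\ker h)\subseteq \ker h$, and the observation that a continuous lift through a covering that is a local translation is automatically affine with the same derivative---is exactly the paper's setup. The gap is in the middle step: the commutation identity $h\circ f_*=(Df)\circ h$ is false, and so is the statement you reduce it to, namely that $Df$ carries $\v_i$ to $\pm\v_{\sigma(i)}$. For the latter, take the Dehn twist $\phi_1$ in the cylinder in direction $\v_1$: by \eqref{eq:some matrices} its derivative in the basis $\{\v_1,\v_2\}$ is $P_1=\twotwo 1301$, which sends $\v_2\mapsto 3\v_1+\v_2$, not $\pm\v_j$ for any $j$; more structurally, no parabolic can preserve the finite set $\{\pm\v_0,\pm\v_1,\pm\v_2\}$, yet the Veech group contains parabolics. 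Your justification fails because ``short diagonal of a rhombus'' is metric data, not affine data: an affine automorphism distorts lengths, so its derivative need not preserve any finite set of vectors attached to the flat geometry. For the identity itself, consider the order-three translation automorphism $\delta$ of $Z^\circ$ (the deck transformation of $Z^\circ\to Y^\circ$, induced by the rotation of the tiling plane by $\frac{2\pi}{3}$ about the center of a downward triangle; it respects the direction field on the rhombi, so $D\delta=I$). Conjugation by a lift $\tilde\delta$ sends the deck element $\tau_\w$ to $\tau_{\rho(\w)}$, where $\rho$ is the order-three rotation with $2\v_0\mapsto 2\v_1\mapsto 2\v_2$; hence $h\circ\delta_*=\rho\circ h\neq (D\delta)\circ h=h$. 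The root of the problem is that the deck group is identified with $2\Lambda$ through the tiling plane, while $Df$ acts in translation-structure coordinates; these identifications genuinely differ (deck translations of $S$ do not even have well-defined holonomy displacement vectors, since loops in $S$ such as lifts of $\alpha$ close up with nonzero holonomy), so there is no reason for the conjugation action on the deck group to be given by $Df$, and in fact it is not. (Your side claim that $Df$ preserves $2\Lambda$ is true---$V(Z^\circ)\subset\GL(2,\Z)$ in the basis $\{\v_1,\v_2\}$, cf.\ Theorem \ref{thm:veech group}---but it cannot rescue the identity.)

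What you actually need is much weaker: $h\circ f_*=\phi\circ h$ for \emph{some} injective endomorphism $\phi$ of $2\Lambda$, with no requirement that $\phi=Df$; this already gives $f_*(\ker h)\subseteq\ker h$. That is what the paper establishes, via Proposition \ref{prop:six special classes}: the six relative classes $\pm\eta_0$, $\pm\eta_1$, $\pm(\eta_0-\eta_1)$ are affinely characterized as the classes with \emph{trivial holonomy} whose images under the boundary map to $H_0(\Sigma;\Z)$ have coefficients $\{-1,0,1\}$. Trivial holonomy is the decisive point: it makes these classes immune to the derivative, in contrast to the vectors $\v_i$, which have nonzero holonomy and are sheared by parabolics. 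Consequently $f_*$ preserves the span $W$ of $\eta_0,\eta_1$, and $\ker h$, being the annihilator of $W$ under the intersection pairing (by Proposition \ref{prop:monodromy}), is $f_*$-invariant. Your intuition that the deck-group bookkeeping tracks the puncture permutation $\sigma$ is sound---the resulting $\phi$ does permute $\{\pm 2\v_0,\pm 2\v_1,\pm 2\v_2\}$ according to how $f_*$ permutes the six classes (compare Proposition \ref{prop:monodromy of strips})---but this permutation must be read off from the holonomy-trivial classes $\eta_i$, not from $Df$. Replacing your step ``$Df(\v_i)=\pm\v_{\sigma(i)}$'' by the affine invariance of the six classes turns your outline into the paper's proof.
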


We carry out the proof using ideas from \cite{hooper weiss}. It follows from work there that there can be only one $\Z^2$-cover of $Z^\circ$ so that straight-line flow recurs in almost every direction; see \cite[\S 4]{hooper weiss}. The surface $S$ turns out to be this unique cover.

\begin{proposition}
\label{prop:six special classes}
The collection of six homology classes
$$\{\eta_0,-\eta_0, \eta_1, -\eta_1, \eta_0-\eta_1, -\eta_0 + \eta_1\} \subset H_1(S,\Sigma;\Z)$$
is invariant under the action of the affine automorphism group of $Z^\circ$.
\end{proposition}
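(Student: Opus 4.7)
The plan is to verify the invariance by reducing to a direct check on generators of $\Aff(Z^\circ)$, leveraging the rich symmetry of the hexagonal rhombus decomposition.

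First, I would show that the six classes form a single orbit under the action of an order-six element of $\Aff(Z^\circ)$. Such an element arises from the dihedral $D_6$ symmetry of the three rhombi meeting at a ``hexagon-center'' puncture: a rotation of order three (coming from cyclically permuting $R^0_\0, R^1_\0, R^2_\0$) composed with an appropriate orientation-reversing involution. Concretely, with a correct identification of $\eta_0$ and $\eta_1$ from Figure \ref{fig:torus fundamental group} with two oriented short saddle connections, this symmetry cyclically permutes
$$\eta_0 \to \eta_1 \to \eta_0 - \eta_1 \to -\eta_0 \to -\eta_1 \to -\eta_0 + \eta_1 \to \eta_0,$$
producing a single orbit of size six. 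Verification is a direct computation from the definitions, checking the action on the two generating arcs.

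Second, since $Z^\circ$ is a parallelogram-tiled torus—hence a Veech (lattice) surface—$\Aff(Z^\circ)$ is generated by this order-six element together with a single Dehn twist $\phi$ in one of the three parallel cylinder decompositions in directions $\v_0, \v_1, \v_2$; the other two Dehn twists are conjugates under the rotation. A right Dehn twist along a cylinder $C$ with core curve class $[c] \in H_1(Z^\circ;\Z)$ acts on relative cycles by
$$\gamma \;\mapsto\; \gamma + (\gamma \cap [c])\,[c].$$
I would verify that $[c]$ itself lies in the sublattice $\mathrm{span}_\Z(\eta_0, \eta_1) \subset H_1(Z,\Sigma;\Z)$ and coincides with $\pm \beta_i$ (in the notation of Proposition \ref{prop:monodromy}) for the appropriate $i$. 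Combined with the intersection numbers $\eta_j \cap \beta_i \in \{0,\pm 1\}$ already recorded in the proof of Proposition \ref{prop:monodromy}, the shear sends each of the six classes to another element of the set. Combining Steps 1 and 2, every generator preserves the six-element set, so the full group does.

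The main obstacle will be Step 2. The derivative of a Dehn twist is a shear, and a generic shear does \emph{not} preserve the set of hexagonal holonomy vectors $\{\pm\e_0,\pm\e_1,\pm\e_2\}$; the invariance must therefore occur at the level of homology classes rather than only holonomies. It must be verified that the intersection numbers work out precisely to give integer coefficients of $0$ or $\pm 1$ (so that, for instance, $\phi_*(\eta_0)$ does not land at $\eta_0 + 2\eta_1$ or some other element outside the six-class set). Carrying out this intersection calculation on the explicit diagram of $Z^\circ$, with the curves $\alpha, \beta_0, \beta_1, \beta_2$ and the cylinder core curves placed as in Figure \ref{fig:torus fundamental group}, is a finite combinatorial task, but getting the signs right requires care.
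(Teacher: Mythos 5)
Your Step~1 and the generation claim in Step~2 rest on a symmetry that does not exist in $\Aff(Z^\circ)$, and this is a genuine gap rather than a fixable detail. The map cyclically permuting $R^0_\0, R^1_\0, R^2_\0$ is a rotation of the \emph{tiling}, but on the translation surface $Z^\circ$ it is a \emph{translation} automorphism (derivative $I$): the translation structure on $Z^\circ$ is pulled back from $Y$, which identifies all rhombi by label-preserving maps, so the rhombus-permuting deck transformations of $Z^\circ \to Y^\circ$ act as translations in charts. No affine automorphism of $Z^\circ$ has as derivative the order-three rotation permuting the directions $\e_0,\e_1,\e_2$: in the basis $\{\e_1,\e_2\}$ that rotation is the matrix $M$ of \eqref{eq:M}, and the proofs of Proposition \ref{prop:generators} and Theorem \ref{thm:veech group} show precisely that $M, M^2 \notin PV(Z^\circ)$ (e.g.\ $(1,1)\in\Xi$ but $M(1,1)=(-1,0)\notin\Xi$). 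For the same reason your claim that ``the other two Dehn twists are conjugates under the rotation'' is false: the directions $\e_1,\e_2$ lie over one cusp of $\H^2/PV(Z^\circ)$ and $\e_0$ over the other, and indeed $\phi_0$ twists in \emph{three} cylinders while $\phi_1,\phi_2$ each twist in one, so $\phi_0$ is conjugate in $\Aff(Z^\circ)$ to neither. Conjugating $\phi_1$ by the reflection $\rho$ only yields $\phi_2^{-1}$ (one computes $RP_1R=P_2^{-1}$ from \eqref{eq:some matrices}), and while an order-six element permuting the six classes transitively does exist (compose the order-three deck transformation of $Z^\circ \to Y^\circ$ with the automorphism of derivative $-I$), its projectivized derivative is trivial, so your proposed set generates only a small subgroup of $\Aff(Z^\circ)$ --- far from the $(3,\infty,\infty)$ group. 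A repaired generator check would need the three reflections of Proposition \ref{prop:generators} together with $-I$ and the translation automorphisms; the latter permute the punctures and act nontrivially on the six classes, so they too must be verified, which your sketch omits.

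There is a second concrete error in Step~2: the claim that the core class $[c]$ ``lies in $\mathrm{span}_\Z(\eta_0,\eta_1)$ and coincides with $\pm\beta_i$'' is impossible. Core curves of cylinders are absolute classes with nonzero holonomy (the $\e_1$-cylinder has core $\alpha$, and the horizontal cylinders have cores $\alpha+\beta_0$, $\beta_1$, $\beta_2$), whereas every class in $\mathrm{span}_\Z(\eta_0,\eta_1)$ has trivial holonomy and every nonzero one has nonzero image under the boundary map $\delta$. This dichotomy is exactly what the paper's proof exploits, and it makes the whole generator apparatus unnecessary: the six classes are characterized intrinsically as those $\gamma \in H_1(Z,\Sigma;\Z)$ with $\hol(\gamma)=0$ and $\delta(\gamma)=c_0[p_0]+c_1[p_1]+c_2[p_2]$ where $\{c_0,c_1,c_2\}=\{-1,0,1\}$; both conditions are manifestly invariant under any affine automorphism ($\delta$ is equivariant, with homeomorphisms permuting $\Sigma$), and the pair $(\hol,\delta)$ determines a relative class because a nontrivial absolute class on the flat torus $Z$ has nontrivial holonomy. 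This handles all of $\Aff(Z^\circ)$ at once, with no generating set, no twist formula, and no sign bookkeeping. If you insist on the hands-on route, note the subtlety your sketch misses: up to sign conventions the multi-cylinder twist $\phi_0$ sends $\eta_0 \mapsto \eta_0-(\alpha+\beta_0)+\beta_2$, and this equals $\eta_0$ in $H_1(Z,\Sigma;\Z)$ only because the three horizontal core classes, having equal holonomy, coincide in $H_1(Z;\Z)$ --- precisely the kind of relation your intersection-number computation would have to track correctly.
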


For the proof we need the concept of holonomy.
The {\em holonomy} of a curve in a translation surface obtained by developing the curve into the plane using local coordinates and then measuring the vector difference between the start and end points. This concept of holonomy induces a linear mapping $\hol:H_1(Z,\Sigma; \Z) \to \R^2$.

\begin{proof}
We will explain why these six classes are canonical
in an affine-invariant sense. First of all they have trivial holonomy which is certainly an affinely invariant concept. Second, consider the boundary map
$$\delta: H_1(Z,\Sigma;\Z) \to H_0(\Sigma; \Z).$$
Note that $\delta$ is equivariant under the induced actions of a homeomorphism of $(Z,\Sigma)$.
Furthermore, the action of homeomorphisms on $H_0(\Sigma; \Z)$ is by permutation matrices.
We observe that a class in $H_1(Z,\Sigma;\Z)$
is determined by its holonomy and its image under $\delta$. This is because two elements with the same
image under $\delta$ differ by absolute homology classes
in $H_1(Z;\Z)$, and non-trivial absolute homology classes have non-trivial holonomy since $Z$ is a flat torus.

\comref{Proposition 4.5, page 20: in the last paragraph you consider elements of
H 0 (Sigma; Z) as functions in -1, 0, 1. Are you passing to cohomology?} \compat{I should have been referring to coefficients in a sum. Corrected.}
Let $p_0$, $p_1$ and $p_2$ denote the three points of $\Sigma$ and let $[p_i]$ denote the corresponding classes in $H_0(\Sigma;\Z)$.
Observe that the images under $\delta$ of the six classes in the Proposition have the form
\begin{equation}
\label{eq:points}
c_0[p_0]+c_1[p_1]+c_2[p_2]\quad \text{where} \quad \{c_0,c_1,c_2\}=\{-1,0,1\},
\end{equation}
i.e., each coefficient is in the set $\{-1,0,1\}$ and each coefficient appears once.
Observe there are six elements of $H_0(\Sigma;\Z)$ of this form and these six classes coincide with the six stated in this proposition.
This collection is clearly invariant under the permutation action on $\Sigma$.
Thus, the six relative homology classes listed in the proposition are invariant under the action of affine automorphisms of $Z^\circ$: They are precisely those classes with trivial holonomy and whose images under $\delta$ have expressions as in \eqref{eq:points}.
\end{proof}

\begin{proof}[Proof of Lemma \ref{lem:lifting}]
Let $f:Z^\circ \to Z^\circ$ be a homeomorphism. From covering theory, $f$ lifts to a homeomorphism $\tilde f:S \to S$ if and only if the induced action on homology
$f_\ast:H_1(Z^\circ; \Z) \to H_1(Z^\circ; \Z)$ preserves the kernel $\ker h$, where $h$ is as defined in \eqref{eq:h}. By Proposition \ref{prop:monodromy},
$\ker h$ consists of those classes whose algebraic intersection numbers with $\eta_0$ and $\eta_1$ are zero. Observe that the span of $\eta_0$ and $\eta_1$
in $H_1(Z,\Sigma;\Z)$ is $2$-dimensional and coincides with the span of the six classes of Proposition \ref{prop:six special classes}. When $f$ is an affine homeomorphism, these six classes are preserved by the action of $f$ and it follows that their span $W \subset H_1(Z, \Sigma; \R)$ is also preserved.
The kernel can then be written $$\ker h=\{\gamma \in H_1(Z^\circ; \Z):~\eta \cap \gamma=0 \quad \text{for all $\eta \in W$}\},$$
which is $f_\ast$-invariant because $W$ is invariant.
\end{proof}

\subsection{Hyperbolic geometry}
\label{sect:hyperbolic geometry}

To understand and visualize the Veech groups of translation surfaces, it is useful to consider the hyperbolic geometry of these groups.

We will be using the fact that $\PGL(2,\R)$ is the isometry group of the hyperbolic plane $\H^2=O(2) \backslash \PGL(2,\R)$. From this point of view, the isometric action is given by right multiplication. The subgroup $\PSL(2,\R) \subset \PGL(2,\R)$ forms the unit tangent bundle of the hyperbolic plane. The geodesic flow on the unit tangent bundle is then given by the left multiplication action of
\begin{equation}
\label{eq:gt}
g_t=\left[\begin{array}{rr}
e^{-t} & 0 \\
0 & e^t\end{array} \right]
\end{equation}
on $\PSL(2,\R)$.

The upper half-plane model of $\H^2$ identifies points in $\H^2$ with complex numbers with positive imaginary part. To be concrete, we choose our identification so that $M \in \PSL(2,\R)$ represents a unit tangent vector based at
\begin{equation}
\label{eq:upper half-plane}
\frac{di-b}{a-ci} \quad \text{when} \quad M=\twotwo{a}{b}{c}{d};
\end{equation}
this point coincides with the projectivization of the vector $M^{-1}(i,1) \in \C^2$. In particular, the isometric action of $\PGL(2,\R)$ on the upper half plane is given by
$$\twotwo{a}{b}{c}{d}:~z \mapsto \begin{cases}
\frac{dz-b}{a-cz} & \text{if $ad-bc>0$},\\
\frac{d\bar{z}-b}{a-c\bar{z}} & \text{if $ad-bc<0$}.
\end{cases}
$$
(The action of $M \in \PSL(2,\R)$ is by the inverse of the usual M\"obius transformation associated to $M$.)

With these conventions, $i$ is the point in the upper half-plane represented by the coset $\O(2)$ of $\H^2=O(2) \backslash \PGL(2,\R)$.
Let $r_\theta \in O(2)$ be the vector which rotates
the vector $(1,0)$ to $(\cos \theta, \sin \theta)$ .
As $t \to \infty$, the $\theta$-direction of the plane is contracted by $g_t r_\theta^{-1}$. This is the geodesic leaving $i$ in the upper half-lane at an angle of $-2 \theta$ from the vertical. The geodesic limits on $\cot(\theta) \in \R \cup \{\infty\}$ as $t \to +\infty$. (This geodesic coincides with the $g_t(\vec{u_\theta})$ when $0 \leq \theta \leq \frac{\pi}{2}$ in the introduction.)

\subsection{The affine automorphism groups}
\comref{Finally, when in Section 4.5 you use
right Dehn twists along cylinders in directions v i you could reference the
pictures you included of the cylinders to help make this distinction clear.}
\compat{I think the referee is referring to our image of cylinders on $S$. I did add a reference to the figure showing $Z$.}
Recall that $Z^\circ$ is the torus built out of three rhombi with the vertices removed; see Figure \ref{fig:tori}.
We will now work out some facts about the affine automorphism group and the Veech group of $Z^\circ$.

We observe using Proposition \ref{prop:dehn twist} that $\Aff(Z^\circ)$ contains the following elements:
\begin{itemize}
\item The right Dehn twist $\phi_1$ in the single maximal cylinder in direction $\e_1$.
\item The right Dehn twist $\phi_2$ in the single maximal cylinder in direction $\e_2$.
\item The right Dehn twist $\phi_0$ in the three maximal cylinders in direction $\e_0$.
\end{itemize}
In addition, by inspection we can find the following orientation-reversing element:
\begin{itemize}
\item There is an affine automorphism $\rho$ of $Z^\circ$ which preserves each of the three rhombi making up $Z$ and acts as a reflection in the short diagonal of each rhombus.
\end{itemize}
We let $P_i=D(\phi_i)$ and $R=D(\rho)$ and by computation find that in the basis $\{\e_1, \e_2\}$:
\begin{equation}
\label{eq:some matrices}
P_1=\twotwo 1301,
\qquad
P_2=\twotwo 10{-3}1,
\qquad
P_0=\twotwo 01{-1}2,
\qquad
R=\twotwo 0{1}{1}0.
\end{equation}
Finally, we note that there is an affine automorphism with derivative $-I$ which preserves rhombus $R_0$ and swaps rhombus $R_1$ with $R_2$. This is convenient because it means that $V(Z^\circ) \subset \GL(2,\R)$ is the preimage of a subgroup $PV(Z^\circ) \subset \PGL(2,\R)$ under the natural map $\GL(2,\R) \to \PGL(2,\R)$.

\begin{proposition}
\label{prop:generators}
The projectivized Veech groups $PV(Z^\circ)$ and
$PV(S)$ are isomorphic to the reflection group in a $(3,\infty,\infty)$-triangle (i.e., a hyperbolic triangle with one angle of $\frac{\pi}{3}$ and two ideal vertices). Reflections generating the Veech group are given by $R$, $R P_0$, and $P_1^{-1} R P_0$.
\end{proposition}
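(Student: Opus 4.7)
The plan has three stages: reduce $PV(S) = PV(Z^\circ)$, verify that the three listed matrices generate a $(3,\infty,\infty)$ reflection group, and prove this exhausts the Veech group. I expect the third stage to be the main obstacle.

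For the first stage, Lemma \ref{lem:lifting} lifts every $f \in \mathrm{Aff}(Z^\circ)$ to $\tilde{f} \in \mathrm{Aff}(S)$, and since the deck group $2\Lambda$ acts by translation (derivative $I$), the derivative of any lift equals that of the original. Conversely any $g \in \mathrm{Aff}(S)$ normalizes the deck group (because $2\Lambda$ is characterized as those affine automorphisms of $S$ with trivial derivative commuting with the covering), and so descends to $\mathrm{Aff}(Z^\circ)$ with the same derivative. Hence $V(Z^\circ) = V(S)$ and it suffices to identify $PV(Z^\circ)$.

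For the second stage, I would compute directly from \eqref{eq:some matrices} that each of $R$, $RP_0$, $P_1^{-1}RP_0$ has determinant $-1$ and trace $0$. In $\mathrm{PGL}(2,\mathbb{R}) = \mathrm{Isom}(\mathbb{H}^2)$, such an element is an involution with real eigenvalues of opposite sign, hence a reflection in its fixed geodesic. The products then pin down the geometry: $R \cdot RP_0 = P_0$ has trace $2$ and $RP_0 \cdot P_1^{-1}RP_0 = P_1$ has trace $2$, both parabolic, so the corresponding pairs of reflection axes meet at ideal boundary points (the two $\infty$-vertices of $\Delta$); and $R \cdot P_1^{-1}RP_0$ has trace $-1$ and determinant $1$, hence is elliptic of order three, forcing the remaining pair of axes to cross at interior angle $\pi/3$. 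By the Poincar\'e polygon theorem, the group $\Gamma := \langle R, RP_0, P_1^{-1}RP_0 \rangle$ admits the resulting $(3,\infty,\infty)$ triangle as a fundamental domain, so $\Gamma$ is the $(3,\infty,\infty)$ reflection group and $\mathrm{covol}(\Gamma) = \pi - \pi/3 = 2\pi/3$. All three generators lie in $V(Z^\circ)$ by construction, so $\Gamma \subseteq PV(Z^\circ)$.

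The hard part is the reverse inclusion $PV(Z^\circ) \subseteq \Gamma$. My approach is to exploit that $Z^\circ$ is affinely a punctured torus: the three rhombi glue to a topological torus with the three vertex classes removed, and after applying an affine map sending $\{\v_1,\v_2\}$ to the standard basis of $\mathbb{Z}^2$, $Z$ becomes the square torus $\mathbb{R}^2/\mathbb{Z}^2$ with three marked points $\Sigma \subset \mathbb{R}^2/\mathbb{Z}^2$ removed. The Veech group of the square torus is $\mathrm{PGL}(2,\mathbb{Z})$, and $PV(Z^\circ)$ embeds as the stabilizer of $\Sigma$ (as a set) in $\mathrm{PGL}(2,\mathbb{Z})$. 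I would identify the three puncture coordinates explicitly from the rhombus gluing data, compute this stabilizer, and verify it has hyperbolic covolume $2\pi/3$. Since $\Gamma \subseteq PV(Z^\circ)$ is contained in a lattice of equal covolume, equality follows. The principal risk is combinatorial: correctly locating $\Sigma$ in the square torus presentation and carrying out the stabilizer computation to match the index expected from $\Gamma$.
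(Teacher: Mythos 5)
Your stages 1 and 2 essentially track the paper. The paper also reduces to $PV(Z^\circ)$ via Lemma \ref{lem:lifting}, and its verification of the triangle-group structure (``it may be verified that a fundamental domain\ldots is shown in Figure \ref{fig:veech group}'') is exactly what your trace computations supply: with the paper's conventions, $RP_0=\twotwo{-1}201$ and $P_1^{-1}RP_0=\twotwo{-1}{-1}01$ act as reflections in the vertical lines $\Re z=1$ and $\Re z=-\frac{1}{2}$, while $R$ inverts in the unit circle, so the three axes do bound a triangle with ideal vertices at $1$ and $\infty$ and an interior vertex at $e^{2\pi i/3}$ of angle $\frac{\pi}{3}$, and Poincar\'e applies. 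Your stage 3, however, is a genuinely different route. The paper never locates the punctures: it sandwiches $G=\langle R,\,RP_0,\,P_1^{-1}RP_0\rangle \subseteq PV(Z^\circ)\subseteq \PGL(2,\Z)$, computes $[\PGL(2,\Z):G]=4$ from covolume ($\frac{2\pi}{3}=4\cdot\frac{\pi}{6}$), and rules out index less than $4$ for $PV(Z^\circ)$ by exhibiting the order-three element $M=\twotwo 0{-1}1{-1}$ of \eqref{eq:M}, which cyclically permutes $\e_0,\e_1,\e_2$; since direction $\e_0$ decomposes $Z^\circ$ into three cylinders while $\e_1$ gives a single cylinder, neither $M$ nor $M^2$ can lie in $PV(Z^\circ)$, and multiplicativity of indices forces $[PV(Z^\circ):G]=1$. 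That argument is shorter and purely geometric; your stabilizer computation is heavier but buys more --- carried to completion it yields the mod-$3$ congruence description of the Veech group, which is precisely the paper's Theorem \ref{thm:veech group}, there proved separately by the same sandwich trick.

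Two wrinkles in your stage 3, both repairable and both sitting exactly where you flagged the risk. First, in the basis $\{\e_1,\e_2\}$ the square torus is $Y$, not $Z$: one finds $Z=\R^2/L$ with $L=\{(m,n):m\equiv n \pmod 3\}$ of index $3$ in $\Z^2$, and $L$ is not similar to $\Z^2$ (as $3$ is not a sum of two squares), so ``$Z$ becomes the square torus with three marked points'' is false as stated; the correct picture is $\Sigma=\Z^2/L\subset\R^2/L$. Second, affine automorphisms of a torus carry translation parts, so $PV(Z^\circ)$ is not the stabilizer of $\Sigma$ under the bare linear action; you need $A\Sigma+t=\Sigma$ for some $t$. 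Since $\Sigma$ is a subgroup of the torus, taking differences absorbs $t$, and one gets $V(Z^\circ)=\{A\in\GL(2,\Z):A(L)=L\}$, i.e., $A \bmod 3$ stabilizes the line through $(1,1)$ in $\mathbb{F}_3^2$; as $\PGL(2,\mathbb{F}_3)\cong S_4$ acts on the four lines of $\mathbb{P}^1(\mathbb{F}_3)$ with point stabilizer $S_3$, this subgroup has index $4$ in $\PGL(2,\Z)$ and covolume $\frac{2\pi}{3}$, so your equality-of-covolume conclusion goes through. One gloss you share with the paper: Lemma \ref{lem:lifting} gives only $V(Z^\circ)\subseteq V(S)$, and the reverse containment (that affine automorphisms of $S$ normalize the deck group and descend) is justified in your stage 1 by a somewhat circular parenthetical --- the honest statement needed is that $2\Lambda$ is canonical in $\Aff(S)$, e.g.\ that it coincides with the translations of $S$ that one can descend along --- a point the paper leaves implicit as well.
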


\begin{figure}
\begin{center}
\includegraphics[height=2.25in]{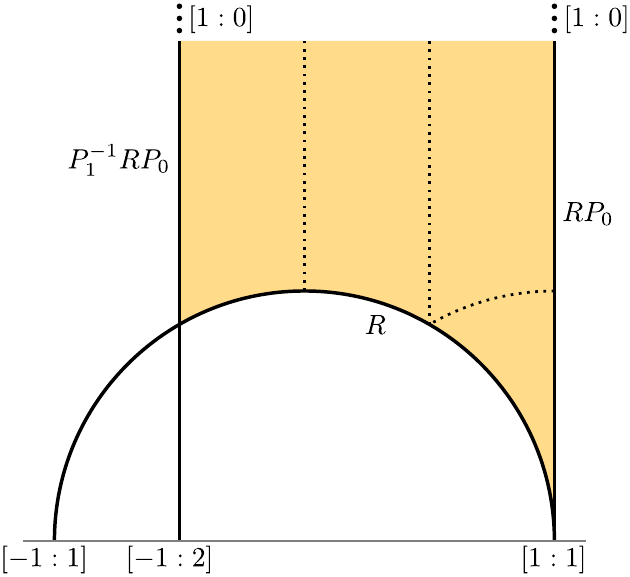}
\caption{A fundamental domain for the action of $PV(Z^\circ)$ on the hyperbolic plane. Edges are labeled by matrices reflecting in that edge. The domain is divided into fundamental regions for the action of $PGL(2,\Z)$ by isometry.}
\label{fig:veech group}
\end{center}
\end{figure}

\begin{proof}[Proof]
By Lemma \ref{lem:lifting}, it suffices to prove the statement for $PV(Z^\circ)$. A matrix calculation reveals
$$R P_0=\twotwo {-1}201 \quad \text{and} \quad P_1^{-1} R P_0=\twotwo {-1}{-1}01.$$
We consider the standard matrix action on the hyperbolic plane realized as the upper half plane bounded by the projectivization of $\R^2$. It may be verified that a fundamental domain for this action is shown in Figure \ref{fig:veech group}.

From remarks above the proposition we know that $G=\langle R, R P_0, P_1^{-1} R P_0\rangle$ is contained in $PV(Z^\circ)$. To see that $G=PV(Z^\circ)$ observe that because $Z^\circ$ covers the torus (which is ``square'' with respect to the basis $\{\e_1, \e_2\}$), we must have $PV(Z^\circ) \subset \PGL(2,\Z)$. By a covolume calculation one can see that $\langle R, R P_0, P_1^{-1} R P_0\rangle$ is index four inside $\PGL(2,\Z)$.
But $PV(Z^\circ)$ must be at least index three inside $\PGL(2,\Z)$ since there are elements of $M \in \PGL(2,\Z)$
so that $M, M^2 \not \in PV(Z^\circ)$ but $M^3 \in PV(Z^\circ)$. (If the left cosets $M \cdot PV(Z^\circ)$
and $M^2 \cdot PV(Z^\circ)$ are not distinct then you can show $M \in PV(Z^\circ)$.)
A primary example of such an $M$ is given by
\begin{equation}
\label{eq:M}
M=\twotwo 0{-1}1{-1}
\end{equation}
which cyclically permutes the vectors $\e_0$, $\e_1$ and $\e_2$. Note that $\e_0$ is geometrically different that $\e_1$ and $\e_2$ in that the cylinder decompositions are different; see the definitions of $\phi_i$ above.
By multiplicativity of subgroup indices,
$$\big[G:\PGL(2,\Z)\big]=\big[G:PV(Z^\circ)\big] \cdot \big[PV(Z^\circ):\PGL(2,\Z)\big].$$ Since $\big[PV(Z^\circ):\PGL(2,\Z)\big]\geq 3$ and $\big[G:\PGL(2,\Z)\big]=4$,
we must have $\big[PV(Z^\circ):\PGL(2,\Z)\big]=4$ and $\big[G:PV(Z^\circ)\big]=1$, i.e., $G=PV(Z^\circ)$.
\end{proof}

\begin{corollary}
\label{cor:Delta}
The region $\Delta \subset \H^2$ of Figure \ref{fig:dark-triangles} is a fundamental domain for the action of $PV(Z^\circ)$ (written in the standard basis) on $\H^2$.
\end{corollary}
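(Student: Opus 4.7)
The plan is to derive this corollary from Proposition~\ref{prop:generators} by transporting the fundamental domain of Figure~\ref{fig:veech group} from the $\{\v_1,\v_2\}$-basis into the standard basis via the change-of-basis matrix $C=[\v_1\,|\,\v_2]$, and then applying a symmetry in $PV(Z^\circ)$ to match the precise placement of $\Delta$ depicted in Figure~\ref{fig:dark-triangles}.

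First I would identify the fundamental triangle of Figure~\ref{fig:veech group}: applying the convention \eqref{eq:upper half-plane} to the generators \eqref{eq:some matrices}, the three reflections $R$, $RP_0$, and $P_1^{-1}RP_0$ fix respectively the unit semicircle $|z|=1$, the vertical line $\Re(z)=1$, and the vertical line $\Re(z)=-\tfrac{1}{2}$. These three geodesics bound a hyperbolic $(3,\infty,\infty)$-triangle with ideal vertices at $\infty$ and $1$ and finite vertex at $-\tfrac{1}{2}+\tfrac{\sqrt{3}}{2}i$.

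Next, since a matrix $M$ expressed in the $\{\v_1,\v_2\}$-basis represents $CMC^{-1}$ in the standard basis, the Veech group in the standard basis is $CGC^{-1}$, and one of its fundamental domains is the image of the triangle above under the isometry $\phi_{C^{-1}}$ of $\H^2$ attached to $C^{-1}$ via \eqref{eq:upper half-plane}. (The right-action convention in \S\ref{sect:hyperbolic geometry} forces $C^{-1}$ rather than $C$.) A short computation gives $C^{-1}=\twotwo{-1}{1/\sqrt{3}}{-1}{-1/\sqrt{3}}$, and evaluating the resulting linear fractional formula $z\mapsto -(z+1)/(\sqrt{3}(z-1))$ at the three vertices yields $-\tfrac{1}{\sqrt{3}}$, $\infty$, and $\tfrac{i}{3}$, giving a $(3,\infty,\infty)$-triangle in $\H^2$ with these three vertices.

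Finally, to match the specific placement in Figure~\ref{fig:dark-triangles} I would observe that $\twotwo{1}{0}{0}{-1}$ lies in $PV(Z^\circ)$ in the standard basis: a matrix computation shows $CRC^{-1}=\twotwo{1}{0}{0}{-1}$, where $R$ is the generator from Proposition~\ref{prop:generators}. Under \eqref{eq:upper half-plane} this matrix acts on $\H^2$ as the reflection $z\mapsto-\bar z$ across the imaginary axis, so it fixes $\infty$ and $\tfrac{i}{3}$ and sends $-\tfrac{1}{\sqrt{3}}$ to $\tfrac{1}{\sqrt{3}}$; applying it to the triangle produced above yields the region $\Delta$ of Figure~\ref{fig:dark-triangles}, which remains a fundamental domain since it is the image of one under a group element. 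The main delicacy throughout is tracking how the right-action convention on $\H^2$ interacts with conjugation by $C$; once that bookkeeping is settled the argument reduces to a handful of $2\times 2$ matrix products and three Möbius-type evaluations.
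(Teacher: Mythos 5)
Your proposal is correct and takes essentially the same route as the paper's proof, which likewise transports the fundamental domain of Figure \ref{fig:veech group} into the standard basis via the change-of-basis matrix $C$ and then applies the reflection $R$ to obtain $\Delta$; your explicit vertex computations (landing on the triangle with ideal vertices $\infty$ and $\frac{\sqrt{3}}{3}$ and finite vertex $\frac{i}{3}$) match the paper's description of $\Delta$ exactly. Your careful tracking of the right-action convention---so that the relevant isometry is right multiplication by $RC^{-1}=(CR)^{-1}$, i.e.\ the usual M\"obius action of $CR$---is precisely the bookkeeping the paper's terse phrase ``apply $CR$'' leaves implicit, and you have resolved it correctly.
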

\begin{proof}
Above we have done calculations in the basis $\{\v_1, \v_2\}$. Matrices in the standard basis can be obtained from matrices in basis $\{\v_1, \v_2\}$ by conjugating by the matrix $C$ whose columns are $\v_1$ and $\v_2$. In particular multiplication by $C$ carries a fundamental domain for $PV(Z^\circ)$ in the basis $\{\v_1, \v_2\}$
to the fundamental domain in the standard basis. Let $R$
be the matrix from (\ref{eq:some matrices})
which is an element of $V(Z^\circ)$ written in the basis $\{\v_1, \v_2\}$. To obtain $\Delta$, we apply $C R$ to the fundamental domain in the basis $\{\v_1, \v_2\}$ shown in Figure \ref{fig:veech group}.
\end{proof}

The proposition above found generators for $PV(Z^\circ)$. We will now give a method for distinguishing when an element $M \in \GL(2,\Z)$ represents an action of
an element in $V(Z^\circ)$ in the basis $\{\v_1, \v_2\}$. Let
$\Z^2_\vis$ denote the set of non-zero pairs $(m,n) \in \Z^2$ consisting of points visible from the origin in $\R^2$. Equivalently, $\Z^2_\vis$ is those pairs $(m,n)$
which are not both zero and satisfy $gcd(m,n)=1$. Observe that
$\Z^2_\vis$ is $\GL(2,\Z)$-invariant. We define
\begin{equation}
\label{eq:Xi}
\Xi=\{(m,n) \in \Z^2_\vis~:~m \equiv n \pmod{3}\}.
\end{equation}

\begin{theorem}
\label{thm:veech group}
The Veech groups $V(Z^\circ)$ and $V(S)$, thought of as a subset of $\GL(2,\Z)$ by writing the elements in the basis $\{\e_1, \e_2\}$, are given by $\{M \in \GL(2,\Z) ~:~ M(\Xi)=\Xi\}.$
Furthermore, the actions of these groups on $\Xi$ and $\Z^2_\vis \smallsetminus \Xi$ are both transitive.
\end{theorem}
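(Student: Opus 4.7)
The plan has three parts: (i) reduce the two-Veech-group statement to a single group; (ii) identify this common group with $H := \{M \in \GL(2,\Z) : M(\Xi) = \Xi\}$ by matching indices; (iii) deduce the two transitivity claims.

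For (i), Proposition \ref{prop:generators} gives $PV(Z^\circ) = PV(S)$ inside $\PGL(2,\Z)$. The affine involution of $Z^\circ$ with derivative $-I$ noted just above Proposition \ref{prop:generators} lifts to $S$ by Lemma \ref{lem:lifting}, so $\{\pm I\}$ lies in both Veech groups. Since this is the kernel of $\GL(2,\Z) \twoheadrightarrow \PGL(2,\Z)$, each Veech group is the full preimage of its common projective image, whence $V(Z^\circ) = V(S)$.

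For (ii), reducing \eqref{eq:Xi} modulo $3$ shows that $\Xi$ is precisely the set of primitive vectors whose image in $(\Z/3\Z)^2$ lies on the line $\ell := \langle(1,1)\rangle$. Hence $M \in H$ iff the reduction $\bar M \in \GL(2,\Z/3\Z)$ preserves $\ell$. A direct computation on $R, P_0, P_1$ from \eqref{eq:some matrices} verifies this for each generator listed in Proposition \ref{prop:generators}, giving $V(Z^\circ) \subseteq H$. For the reverse inclusion, $\GL(2,\Z/3\Z)$ acts on the four lines of $(\Z/3\Z)^2$ through the standard isomorphism $\PGL(2,\Z/3\Z) \cong S_4$ as the full symmetric group on four points, so the stabilizer of $\ell$ has index $4$. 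Surjectivity of the mod-$3$ reduction then gives $[\GL(2,\Z) : H] = 4$, while Proposition \ref{prop:generators} combined with $-I \in V(Z^\circ)$ gives $[\GL(2,\Z) : V(Z^\circ)] = 4$. Matching finite indices forces the inclusion to be an equality.

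For (iii), transitivity on $\Xi$ is nearly immediate: given $v, w \in \Xi$, pick $N \in \GL(2,\Z)$ with $Nv = w$ using transitivity of $\GL(2,\Z)$ on $\Z^2_\vis$; then $\bar N$ sends $\langle\bar v\rangle = \ell$ to $\langle\bar w\rangle = \ell$, so $N \in V(Z^\circ)$. For transitivity on $\Z^2_\vis \smallsetminus \Xi$, the cleanest route is via cusps: Corollary \ref{cor:Delta} presents $\Delta$ as a fundamental domain for $PV(Z^\circ)$, and $\Delta$ has exactly two ideal vertices, so $PV(Z^\circ)$ has two cusps. For a finite-index subgroup of $\PGL(2,\Z)$, cusps correspond to orbits on the rational boundary of $\H^2$, equivalently to orbits on primitive directions in $\Z^2$; since $-I \in V(Z^\circ)$, these correspond to $V(Z^\circ)$-orbits on $\Z^2_\vis$. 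Hence there are exactly two orbits, and the nonempty invariant decomposition $\Z^2_\vis = \Xi \sqcup (\Z^2_\vis \smallsetminus \Xi)$ must be this orbit decomposition. The main subtlety is the cusp-to-orbit identification in the last step; as an alternative, a direct mod-$3$ argument uses that the principal congruence subgroup $\Gamma(3) \subset V(Z^\circ)$ acts transitively on primitive vectors within each nonzero residue class mod $3$ (a consequence of the orbit count $|\SL(2,\Z/3\Z)|/3 = 8$ matching the number of nonzero residues).
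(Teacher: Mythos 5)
Your proposal is correct, and it follows the paper's skeleton: reduce to $V(Z^\circ)$ via the lifting lemma, check that the generators from Proposition \ref{prop:generators} preserve $\Xi$, establish equality by an index count inside $\GL(2,\Z)$, and obtain transitivity from the two cusps visible in the fundamental domain. Two of your steps, however, genuinely differ from the paper's, both arguably for the better. For the index count, the paper only bounds $[G:\GL(2,\Z)]\geq 3$ by exhibiting the explicit matrix $M$ of \eqref{eq:M} with $M,M^2\not\in G$ but $M^3\in G$, and then invokes divisibility by $4=[\,V(Z^\circ):\GL(2,\Z)\,]$; you instead compute the index of $H$ exactly, by recognizing $H$ as the preimage of the stabilizer of the line $\ell=\langle(1,1)\rangle$ under the reduction $\GL(2,\Z)\to\GL(2,\Z/3\Z)$ (surjective here since $\det=\pm 1$ already covers $(\Z/3\Z)^\times$) and using $\PGL(2,\Z/3\Z)\cong S_4$ acting on the four lines of $(\Z/3\Z)^2$. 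This is more conceptual and eliminates the divisibility step. Second, your one-line direct proof of transitivity on $\Xi$ --- any $N\in\GL(2,\Z)$ with $Nv=w$ automatically satisfies $\bar N(\ell)=\langle \bar w\rangle=\ell$, hence lies in $H$ --- does not appear in the paper, which deduces both transitivity claims simultaneously from the cusp count; your primary route for the complement is then exactly the paper's argument, including the same use of $-I$ to pass from orbits in $\Z^2_\vis/{\pm 1}$ to orbits in $\Z^2_\vis$.

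One caution about your sketched mod-$3$ alternative for transitivity on $\Z^2_\vis\smallsetminus\Xi$: transitivity of $\Gamma(3)$ within each nonzero residue class is true (and your orbit-count justification is essentially right), but it is not by itself sufficient, because $\Z^2_\vis\smallsetminus\Xi$ meets six residue classes distributed over the three lines of $(\Z/3\Z)^2$ other than $\ell$. You would also need that the image of $H$ in $\GL(2,\Z/3\Z)$ --- the full stabilizer of $\ell$ --- acts transitively on those six residues. This does hold: the point stabilizer $S_3\subset S_4$ permutes the other three lines transitively, and $-I$ exchanges the two nonzero points on each line. But as written that step is missing, so the alternative is incomplete while your primary (cusp) argument stands.
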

\begin{proof}
Write $G=\{M \in \GL(2,\Z) ~:~ M(\Xi)=\Xi\}.$
By Lemma \ref{lem:lifting}, it suffices to prove the statement for $V(Z^\circ)$. To see $V(Z^\circ) \subset G$, it suffices to show that
the generators of $V(Z^\circ)$ lie in $G$. By Proposition \ref{prop:generators} the group elements $R$, $P_0$, $P_1$, and $P_2$ of \eqref{eq:some matrices} together with $-I$ generate $V(Z^\circ)$. Each can be shown to lie in $G$ by a simple calculation which we demonstrate for the case of $P_0$. Suppose $(m,n) \in \Xi$. Then $\gcd(m,n)=1$ and there is an integer $k$ so $n=m+3k$. Observe $P_0(m,n)=(n,2n-m)$. Since $P_0 \in \GL(2,\Z)$ we know that $\gcd(n,2n-m)=1$. Observe $2n-m=n+(n-m)=n+3k$ so $n \equiv 2n-m \pmod{3}$. This proves
$P_0 \in G$.

We have shown that $V(Z^\circ) \subset G$.
To show $V(Z^\circ) = G$ we use an index argument similar to the end of the proof of Proposition \ref{prop:generators}. We will show $[V(Z^\circ):G]=1$. From the paragraph above
we have $V(Z^\circ) \subset G \subset \GL(2,\Z)$ and therefore
\begin{equation}
\label{eq:index2}
[V(Z^\circ):\GL(2,\Z)]=[V(Z^\circ):G] \cdot [G:\GL(2,\Z)].
\end{equation}
From our explicit description of $V(Z^\circ)$ above we know that $[V(Z^\circ):\GL(2,\Z)]=4$. Since subgroup indices are positive integers, it suffices to prove that $[G:\GL(2,\Z)]>2$.

Consider the matrix $M \in \GL(2,\Z)$ of \eqref{eq:M} and observe that
$(1,1) \in \Xi$ while $M(1,1)=(-1,0)$ and $M^2(1,1)=(0,-1)$. Thus $M,M^2 \not \in G$ while $M^3 \in G$. We conclude that $[G:\GL(2,\Z)] \geq 3$.
Since the index must divide four we have $[G:\GL(2,\Z)] = 4$ and so $[V(Z^\circ):G]=1$ as desired.

\comref{It seems that the paragraph beginning with ``Now we will show that G'' should go above Equation 14.}
\compat{The paragraph beginning ``Now we will show that G'' should not have been there. I removed it and made some changes to clarify the proof.}

Algebraically, cusps of the quotient of the hyperbolic plane, $\H^2/V(Z^\circ)$, are $V(Z^\circ)$-orbits in $\Z^2_\vis/{-1}$ where $-1$ is acting by scalar multiplication. (Concretely, monodromy around the cusp gives a conjugacy class of parabolics, whose collective eigenspaces constitute such a $V(Z^\circ)$-orbit in the real projective line. Since $-I \in V(Z^\circ)$, $V(Z^\circ)$ acts transitively on eigendirections rather than just eigenspaces.) By Proposition \ref{prop:generators},
there are two such cusps (also see Figure \ref{fig:veech group}). Since $-I \in V(Z^\circ)$, this also means there are two orbits of $V(Z^\circ)$ in $\Z^2_\vis$. The paragraph above shows that $\Xi$ is $V(Z^\circ)$-invariant. It follows that the two $V(Z^\circ)$-orbits corresponding to the cusps must be $\Xi$ and $\Z^2_\vis \smallsetminus \Xi$. This proves the last sentence of the theorem.
\end{proof}

\section{Periodic and drift-periodic directions}\label{periodic directions}

In this section, we explicitly describe the set $\mathcal P$ of periodic directions, and the set $\mathcal D$ of drift-periodic directions, and then give results about periodic and drift-periodic trajectories.

\subsection{Characterization of periodic and drift-periodic trajectories}

We establish the following corollary to Theorem \ref{thm:veech group}.

\begin{corollary}[Periodic directions on $S$]
\label{cor:lattice directions on S}
Let $\w=m \e_1+n \e_2 \in \Lambda_\vis$
where $\Lambda_\vis \subset \Lambda$ was defined in \eqref{eq:Lambda vis}. Let $\theta = \frac{\w}{|\w|} \in S^1$. Then:
\begin{enumerate}
\item If $m \equiv n \pmod{3}$, then the straight-line flow $F_\theta$ on $S$ is {\em completely drift-periodic} (i.e., for every non-singular $F_\theta$-trajectory there is an infinite-order translation automorphism of $S$ which preserves the trajectory.)
\item Otherwise, the straight-line flow $F_\theta$ on $S$ is completely periodic.
\end{enumerate}
On the other hand, if $\theta$ is not parallel to any vector in $\Lambda$, then $F_\theta$ has no periodic or drift-periodic trajectories.
\end{corollary}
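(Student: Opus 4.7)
The plan is to combine the monodromy map $h$ of Proposition \ref{prop:monodromy} with the Veech group transitivity of Theorem \ref{thm:veech group}, reducing the corollary to two representative directions. I first dispatch the last assertion: a closed $F_\theta$-orbit on $S$ projects through the composed covering $S \to Z^\circ \to Y$ to a closed trajectory on the flat torus $Y$ of Figure \ref{fig:tori}, forcing $\theta$ parallel to a vector of $\Lambda$. A drift-periodic orbit returns to itself modulo a non-zero element $\tau$ of the deck group $2\Lambda \subset \Lambda$; since the orbit travels in direction $\theta$, we have $\tau \parallel \theta$, so again $\theta$ is parallel to a vector of $\Lambda$.

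Now suppose $\w = m\e_1 + n\e_2 \in \Lambda_\vis$ and $\theta = \w/|\w|$. Since $\theta$ is rational with respect to the lattice $\Lambda$ of the flat torus $Z^\circ$, the flow $F_\theta$ on $Z^\circ$ is completely periodic and decomposes the torus into finitely many maximal cylinders. Each cylinder $C \subset Z^\circ$ has a core curve $\gamma \in H_1(Z^\circ; \Z)$; lifting $C$ through the $2\Lambda$-cover $\pi : S \to Z^\circ$ yields a disjoint family of compact cylinders when $h(\gamma) = 0$, and a disjoint family of infinite drift-periodic strips (each with translation period $h(\gamma) \in 2\Lambda$) otherwise. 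Thus the two claims reduce to showing that every core curve of $F_\theta$ on $Z^\circ$ has trivial monodromy exactly when $m \not\equiv n \pmod{3}$.

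By Theorem \ref{thm:veech group}, $V(S)$ acts transitively on $\Xi$ and on $\Z^2_\vis \smallsetminus \Xi$, and affine automorphisms of $S$ preserve both ``completely periodic'' and ``completely drift-periodic.'' So it suffices to verify each case for one representative direction. For $(m,n) = (1,0) \in \Z^2_\vis \smallsetminus \Xi$, with $\w = \e_1 = \v_1$, Figure \ref{fig:cylinder} together with the cylinder description used in the proof of Theorem \ref{thm:bounded implies periodic}(b) already exhibits $\{C_\c : \c \in 2\Lambda\}$, where $C_\c = R^0_\c \cup R^1_{\c + 2\v_0} \cup R^2_{\c - 2\v_2}$, as a decomposition of $S$ into compact cylinders in direction $\v_1$. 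Hence $F_{\e_1}$ on $S$ is completely periodic, which by Veech-transitivity settles the $\Z^2_\vis \smallsetminus \Xi$ case.

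For $(m,n) = (1,1) \in \Xi$ we have $\w = \e_1 + \e_2 = -\e_0$, so the problem reduces to direction $\e_0$. I would identify the three maximal $\e_0$-cylinders of $Z^\circ$, i.e., those stabilized by the Dehn twist $\phi_0$ of \S\ref{hidden symmetries}, write down homology representatives for their core curves $\gamma$ in the basis $\{\alpha, \beta_0, \beta_1, \beta_2\}$ used in Proposition \ref{prop:monodromy}, then compute $h(\gamma) = 2(\eta_0 \cap \gamma)\v_0 + 2(\eta_1 \cap \gamma)\v_1$ and verify $h(\gamma) \neq 0$ for each core curve. The main obstacle is precisely this bookkeeping: tracing $\e_0$-trajectories across the three rhombi of $Z^\circ$ under the identifications \eqref{eq:gluings} to pin down the three cylinders and their core curves, and then reading off the intersection numbers with $\eta_0$ and $\eta_1$ from Figure \ref{fig:torus fundamental group}. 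Once the monodromies are in hand, Proposition \ref{prop:monodromy} and the Veech-transitivity step finish the argument.
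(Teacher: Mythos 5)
Your outline is structurally the paper's own proof: verify one representative direction in each of the two Veech-group orbits and transport the conclusion using the transitivity of Theorem \ref{thm:veech group} together with the lifting Lemma \ref{lem:lifting}, decide cylinder versus strip via the monodromy $h$ of Proposition \ref{prop:monodromy}, and obtain the final assertion by projecting to the torus $Y$. Your periodic base case is fine: the paper instead observes that the single $\v_1$-cylinder of $Z^\circ$ has core curve homologous to $\alpha$ with $h(\alpha)=\0$ and invokes normality of the cover, but citing the explicit decomposition $\{C_\c\}$ from the proof of Theorem \ref{thm:bounded implies periodic}(b) is equivalent. However, the drift-periodic base case --- the only place where the dichotomy is actually decided --- is announced rather than executed, and you should close it. The bookkeeping you defer is short given the paper's data: the three horizontal cylinders of $Z^\circ$ have core curves homologous to $\alpha+\beta_0$, $\beta_1$ and $\beta_2$ (Figure \ref{fig:torus fundamental group}), and by \eqref{eq:h on basis} their monodromies are $-2\v_0$, $-2\v_1$ and $-2\v_2$, all of infinite order; hence all three cylinders lift to strips and the horizontal direction $\v_1+\v_2=-\v_0$ is completely drift-periodic, as required.

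One genuine error to repair: in your proof of the last assertion you claim a drift-periodic trajectory's stabilizing deck element $\tau \in 2\Lambda$ satisfies $\tau \parallel \theta$. This conflates the $2\Lambda$-label of the deck element with the vector by which it translates in the translation structure of $S$; the two differ because the rhombi are reassembled by rotations (precisely the directional subtlety flagged in \S \ref{sect: to a translation surface}). For instance, the strip lying over the horizontal cylinder with core curve $\beta_1$ has deck element $-2\v_1$, yet it is horizontal: the element acts along the strip by the translation vector $\hol(\beta_1)$, not by $-2\v_1$. Your conclusion survives, since the relevant translation vector is the holonomy of the projected closed loop on $Z^\circ$, hence a period of the flat torus $Z$, which does lie in $\Lambda$; but the cleaner fix is to treat drift-periodic orbits exactly as you treated periodic ones --- an orbit invariant under a non-trivial deck element of $S \to Z^\circ$ projects to a closed trajectory on $Z^\circ$ and then on $Y$, and closed geodesics on the torus $Y$ are parallel to vectors in $\Lambda$ --- which is the paper's uniform treatment of both cases.
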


\begin{proof}
First we consider the special case when $\theta$ is horizontal. This direction may be represented as $\v_1+\v_2$. There are three horizontal cylinders on $Z^\circ$. All lift to strips on the surface $S$. To see this, refer to the left and central part of Figure \ref{fig:torus fundamental group}; the three horizontal cylinders in $Z^\circ$ have core curves that are homologous to $\alpha+\beta_0$, $\beta_1$ and $\beta_2$. Curves representing these classes do not lift
to cylinders on $S$ because the corresponding curves in the tiling to not close up; see the figure. Instead the cylinders lift to strips invariant under deck group elements $-2 \v_0$, $-2 \v_1$ or
$-2 \v_2$ respectively. By normality of the cover, $S$
is covered by horizontal strips; i.e., the horizontal direction is completely drift-periodic.

Now consider (1). Here, we have $(m,n) \in \Xi$, see \eqref{eq:Xi}. The vector $(m,n)$ represents $\v$ in the basis $\{\e_1, \e_2\}$. By Theorem \ref{thm:veech group}
there is an element $M \in V(Z^\circ)$ when written in this basis carries $(1,1)$ to $(m,n)$. In standard Euclidean coordinates, this carries the horizontal vector $-\e_0=\e_1+\e_2$ to $\w$. By Lemma \ref{lem:lifting} $M$ also represents
an element of the Veech group $V(S)$. Since the horizontal direction is completely drift-periodic on $S$, the direction $\theta$ must also be completely drift-periodic.

Now consider the direction of $\v_1$, represented by $(1,0)$ in our usual basis. There is a single cylinder on $Z^\circ$ in direction the $\v_1$, and its core curve is homomorphic to $\alpha$ shown in Figure \ref{fig:torus fundamental group}. Observe that this cylinder lifts as a cylinder to $S$ since the curve $\alpha$ lifts to a closed curve;
see the middle of the figure. By normality again,
the direction of $\v_1$ is completely periodic on $S$.

Now consider (2). In this case $(m,n) \not \in \Xi$. Theorem \ref{thm:veech group} tells us there is an element $M \in V(Z^\circ)$ carrying $(1,0)$ to $(m,n)$. Repeating the argument above, we see that the direction $\theta$ is completely periodic for $S$.

To see the final statement suppose that $p \in S$ has a periodic or drift periodic trajectory under $F_\theta$ for some $\theta \in S^1$. Let $\bar p \in Y^\circ$ be the image of $p$ under the covering map to $Y^\circ$ as in Proposition  \ref{prop:covering}. Let $\bar F_\theta$ denote straight-line flow in direction $\theta$ on $Y^\circ$. In either case, $\bar p$ has a periodic trajectory under $\bar F_\theta$. But, $Y$ is a torus
and closed geodesics are parallel to vectors in $\Lambda$
because of the particular geometry of this torus. (Edges of $Y$ are glued by translations by vectors in $\Lambda$.)
\end{proof}



\begin{proof}[Proof of Theorem \ref{thm:lattice}]
First suppose $\theta=\frac{\w}{|\w|}$ where
$\w \in \Lambda_\vis = {\mathcal P} \cup {\mathcal D}$. Note that by construction, ${\mathcal P}$ and ${\mathcal D}$ are invariant under the order twelve
dihedral group preserving $\Lambda$.
So, we may assume without loss of generality that $\theta\in[\frac{\pi}3,\frac{2\pi}3]$
and restrict to considering trajectories in $X_{\theta,+}$; see Proposition \ref{prop: standardization}. As $\w \in \Lambda_\vis$, we have $\w=m \v_1+n \v_2$ where $\gcd(m,n)=1$. As $\theta\in[\frac{\pi}3,\frac{2\pi}3]$,
we have $\|\w\|_{\scriptsize\hexagon} \equiv 0 \pmod{3}$ if and only if $m\equiv n \pmod{3}$.
In view of the orbit equivalence provided by Theorem \ref{thm:orbit equivalence}, it follows from Corollary \ref{cor:lattice directions on S} that
$\theta$ is a completely drift-periodic direction if $\theta \in {\mathcal D}$ and $\theta$ is a completely periodic direction if $\theta \in {\mathcal P}$.

To prove the stated converses, suppose $\theta$ is not-parallel to a vector in $\Lambda$. We may again assume by rotational symmetry that $\theta\in(\frac{\pi}3,\frac{2\pi}3)$
and consider trajectories in $X_{\theta,+}$.
The last statement of Corollary \ref{cor:lattice directions on S} guarantees that the straight-line flow in direction $\theta$ on $S$ lacks both periodic and drift-periodic trajectories. From the orbit equivalence, so does the flow $T_{\theta,+}$.
\end{proof}

\comref{Page 25: Combinatorial period is not defined.}\compat{Added the following paragraph.}

It would be nice to know finer information about the periodic orbits. For example, the {\em combinatorial period} of a periodic tiling billiard trajectory is the number of polygons crossed by the trajectory in a period.

\begin{question}
For a trajectory whose direction is parallel to a vector in $\Lambda$, what is the combinatorial period of the corresponding periodic or drift-periodic trajectory?
\end{question}

 In \cite{icerm}, $\S$5 explores the trihexagonal tiling, and answers this question for several directions in the lattice. For example, Proposition 5.10 in \cite{icerm} says, in our notation, that for a direction of the form $-(3n-2)\v_0-(6n+3)\v_1$, with $n\geq 1$, the trajectory is drift-periodic with combinatorial period $12n-6$. As $n$ increases, these approach the vertical direction. These directions correspond to the white vertices that are just to the right of the vertical line through the origin, on the first, third, fifth, etc. red hexagons in Figure \ref{fig:concentric-hexagons}. They have a similar result for trajectories approaching the direction $\pi/3$, our vector $\v_2$ (\cite{icerm}, Proposition 5.7).

The answer to this question is well understood for the square torus and square billiard table, and is explored for the double pentagon surface and the pentagonal table in \cite{DL}.

\subsection{Geometry of periodic and drift-periodic trajectories}

Since $S$ is a $\Z^2$-cover of $Z^\circ$, a cylinder $C \subset Z^\circ$ either lifts to a cylinder in $S$ or the universal cover embeds into $S$. In the later, we call this embedded image in $S$ a {\em strip} and denote it by $\tilde C$. For each such strip, there is pair of opposite elements $\pm \w \in 2 \Lambda$ which when acting on $S$ as an element in the Deck group preserves the strip $\tilde C$ and whose action on the strip generates the deck group of the covering $\tilde C \to C$. Note that
given $C \subset Z^\circ$, there are multiple choices of
a lift of $C$, but if one of these lifts is a strip then they all are and the pair $\pm \w$ only depends on $C$
so we denote it by $\pm \w(C)$. We call $\pm \w(C)$ the {\em deck group generators} of $C$.

If $C \subset Z^\circ$ is a cylinder a {\em holonomy vector} $\hol(C) \in \R^2$ of $C$ is a vector parallel to a core curve of $C$ with length equal to the length that curve.

\begin{proposition}
\label{prop:monodromy of strips}
Let $\theta=\frac{\v}{|\v|}$ be a drift-periodic direction on $S$ with $\v=m \e_1 + n \e_2 \in \Lambda_\vis$.
Then $Z^\circ$ has a decomposition into three cylinders of equal area in direction $\theta$. These three cylinders $\v$ as a holonomy vector and can be indexed so that $\pm \w(C_i)=\pm 2 \e_i$ for $i \in \{0,1,2\}$.
\end{proposition}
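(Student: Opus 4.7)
The plan is in two stages: first transport the horizontal cylinder decomposition of $Z^\circ$ to direction $\theta$ using an affine automorphism, then identify the deck group generators of the lifted strips via the monodromy formula of Proposition~\ref{prop:monodromy}.

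For the first stage, the drift-periodic assumption together with Corollary~\ref{cor:lattice directions on S} gives $(m,n)\in\Xi$, and by the transitivity in Theorem~\ref{thm:veech group} there exists $M\in V(Z^\circ)$ (written in the basis $\{\e_1,\e_2\}$) with $M(1,1)=(m,n)$. Fix an affine automorphism $f\in\Aff(Z^\circ)$ with $D(f)=M$. The proof of Corollary~\ref{cor:lattice directions on S} exhibits the horizontal cylinder decomposition of $Z^\circ$: three equal-area cylinders whose core curves represent the homology classes $\alpha+\beta_0$, $\beta_1$, and $\beta_2$, with common holonomy $-\e_0=\e_1+\e_2$. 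Applying $f$ gives a three-cylinder decomposition in direction $\theta$, of equal area (since $|\det M|=1$) and common holonomy $M(-\e_0)=\v$.

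For the second stage, for each image core curve $\gamma'=f_*\gamma$ Proposition~\ref{prop:monodromy} gives $h(\gamma')=2(\eta_0\cap\gamma')\e_0+2(\eta_1\cap\gamma')\e_1$, and the deck group generator of the lifted strip is $\pm h(\gamma')$. By $f$-equivariance of the intersection pairing, $\eta_j\cap f_*\gamma=(f^{-1})_*(\eta_j)\cap\gamma$, so we need the classes $(f^{-1})_*(\eta_0)$ and $(f^{-1})_*(\eta_1)$ in $H_1(Z,\Sigma;\Z)$. By Proposition~\ref{prop:six special classes}, the six-element set $\{\pm\eta_0,\pm\eta_1,\pm(\eta_0-\eta_1)\}$ is $\Aff(Z^\circ)$-invariant, so this pair lies in the set. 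Using the horizontal intersection data from the proof of Proposition~\ref{prop:monodromy}, one evaluates the three monodromies case by case and verifies that they permute (with signs) the triple $(-2\e_0,-2\e_1,-2\e_2)$, yielding the indexing $\pm\w(C_i)=\pm 2\e_i$.

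The main obstacle is the deck group computation. A naive pushforward formula $\w\mapsto M\w$ for the generators fails, because $V(Z^\circ)\subset\GL(2,\Z)$ does not preserve the set of hexagon-norm-one vectors $\{\pm\e_0,\pm\e_1,\pm\e_2\}\subset\Lambda$. The argument instead proceeds intrinsically through the monodromy formula combined with the $\Aff(Z^\circ)$-invariant six-element subset of $H_1(Z,\Sigma;\Z)$; this rigidity is what forces the generators to be short. As a consistency check, the three monodromies must sum to zero, since $h\bigl(f_*(\alpha+\beta_0+\beta_1+\beta_2)\bigr)=h(0)=0$, matching $-2\e_0-2\e_1-2\e_2=0$.
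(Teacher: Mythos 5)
Your proposal reproduces the paper's proof in all essentials: the same use of transitivity on $\Xi$ from Theorem \ref{thm:veech group} to transport the horizontal three-cylinder decomposition (with core classes $\alpha+\beta_0$, $\beta_1$, $\beta_2$) to direction $\theta$, and the same computation of the deck-group generators via Proposition \ref{prop:monodromy}, pulling the classes $\eta_0,\eta_1$ back through $f_\ast^{-1}$ and invoking the invariant six-element set of Proposition \ref{prop:six special classes}. Your bookkeeping in Stage 1 (using $(1,1)\in\Xi$ and $M(\e_1+\e_2)=\v$) is, if anything, cleaner than the paper's, and the zero-sum consistency check is a nice addition.

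One step is stated too loosely, and a literal execution of it would fail. You say that since $\bigl(f^{-1}\bigr)_\ast(\eta_0)$ and $\bigl(f^{-1}\bigr)_\ast(\eta_1)$ lie in the six-element set, ``one evaluates the three monodromies case by case'' over such pairs. Membership alone is not enough: take $\xi_0=\eta_0-\eta_1$ and $\xi_1=\eta_1$. The intersection data from the proof of Proposition \ref{prop:monodromy} gives $\xi_0\cap\beta_1=1$ and $\xi_1\cap\beta_1=-1$, so the formula would yield $h\bigl(f_\ast(\gamma_1)\bigr)=2\e_0-2\e_1$, which is not in $\{\pm 2\e_0,\pm 2\e_1,\pm 2\e_2\}$. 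What rules this pair out is that $\xi_0$ and $\xi_1$ are the images of $\eta_0$ and $\eta_1$ under a \emph{single linear} automorphism of $H_1(Z,\Sigma;\Z)$ preserving the six classes. Those classes sit like the sixth roots of unity in the rank-two lattice they span (note $\eta_0+(-\eta_1)+(\eta_1-\eta_0)=0$), so any such automorphism is a dihedral symmetry of this hexagon and carries the adjacent pair $(\eta_0,\eta_1)$ to an adjacent pair; the pair $(\eta_0-\eta_1,\eta_1)$ is antipodal-adjacent, not adjacent, hence unrealizable. Restricting the case analysis to adjacent pairs, every case does produce a signed permutation of $(-2\e_0,-2\e_1,-2\e_2)$, as you claim. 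This is precisely the content the paper compresses into the phrase ``using this and linearity,'' so your argument is correct once that constraint is made explicit.
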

\begin{proof}
Fix $\theta$ and $\v$ as above. By Theorem \ref{thm:veech group}, there is an element $M \in V(Z^\circ)$ which carries $\v_0=(1,0)$ to $\v$. The holonomies of the three cylinders in the horizontal direction on $Z^\circ$ are each $\v_0$,
and it follows that the holonomy of vectors in direction $\theta$ are given by $M(\v_0)=\v$.

Let $\gamma_0=\alpha+\beta_0$, $\gamma_1=\beta_1$ and $\gamma_2=\beta_2$ denote the homology classes of core curves of the three horizontal cylinders; see Figure \ref{fig:torus fundamental group}.
 By definition of $h$ in \eqref{eq:h}, the deck group generators of the horizontal cylinders are given by $\pm h(\gamma_i)=\pm 2 \v_i$
for each $i$. Let $f:Z^\circ \to Z^\circ$ be an affine homeomorphism with derivative $M$. Then the core curves of cylinders in direction $\v$ have homology classes given by $f_\ast(\gamma_i)$. Then by Proposition \ref{prop:monodromy},
$$h \circ f_\ast(\gamma_i)=2 \big(\eta_0 \cap f_\ast(\gamma_i)\big)\v_0+2\big(\eta_1 \cap f_\ast(\gamma_i)\big)\v_1=
2 \big(f_\ast^{-1}(\eta_0) \cap \gamma_i\big)\v_0+2\big(f_\ast^{-1}(\eta_1) \cap \gamma_i\big)\v_1.
$$
Since $f^{-1}$ is an affine homeomorphism,
Proposition \ref{prop:six special classes}
guarantees that $f_\ast^{-1}(\eta_0)$ and $f_\ast^{-1}(\eta_1)$ each take values of the form $\pm \eta_0$,
$\pm \eta_1$ or $\pm(\eta_0-\eta_1)$. Using this and linearity it follows that $h \circ f_\ast(\gamma_i)$.
We have
$$\{\pm \w(C_i):~i=0,1,2\}=\{\pm h \circ f_\ast(\gamma_i):~i=0,1,2\}=\{\pm 2 \v_0, \pm 2 \v_1, \pm 2 \v_2\}.$$
\end{proof}

\begin{corollary}\label{cor:drift-periodic-shift}
Drift-periodic trajectories of the tiling billiard $T:X \to X$ are preserved by translation by $2\v_0$, $2\v_1$ or $2\v_2$. For any drift periodic direction $\theta \in {\mathcal D}$ and any sign $s$, there are trajectories of $T_{\theta,s}$ which are invariant under each of $2\v_0$, $2\v_1$ and $2\v_2$.
\end{corollary}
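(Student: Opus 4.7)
The plan is to transfer the problem to straight-line flow on $S$ via the orbit equivalence of Theorem \ref{thm:orbit equivalence} and then invoke the cylinder analysis in Proposition \ref{prop:monodromy of strips}. Since the derivative of any tiling isometry permutes the set $\{\pm 2\v_0, \pm 2\v_1, \pm 2\v_2\}$, invariance of a trajectory under translation by one of these six vectors is preserved under any symmetry of the tiling. Using Proposition \ref{prop: standardization}, we may therefore reduce both claims to the case $\theta \in [\frac{\pi}{3}, \frac{2\pi}{3}]$ with sign $s = +$, and invariance under $2\v_i$ is equivalent to invariance under $-2\v_i$ once it holds.

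Next I would verify a compatibility between the deck group and the orbit equivalence: for any $\w \in 2\Lambda$, the deck transformation $\tau_\w$ of $S$ defined in \eqref{eq:tau} corresponds under $\x: S \to X_{\theta,+}$ to the map $\sigma_\w$ on $X_{\theta,+}$ that translates basepoints of unit tangent vectors by $\w$; that is, $\x \circ \tau_\w = \sigma_\w \circ \x$. This is essentially built into the construction in the proof of Theorem \ref{thm:orbit equivalence}, since both $\tau$ and $\x$ are defined through the canonical labelling of rhombi $R^i_\c$: $\x$ sends the rhombus $R^i_\c$ in $S$ to the rhombus bearing the same label in the tiling, and $\tau_\w$ shifts the label $R^i_\c$ to $R^i_{\c+\w}$. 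Thus an $F_\theta$-trajectory on $S$ is invariant under $\tau_\w$ exactly when the corresponding trajectory in $X_{\theta,+}$ is invariant under $\sigma_\w$.

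With these preparations, the conclusion drops out. By Corollary \ref{cor:lattice directions on S}(1), $F_\theta$ is completely drift-periodic on $S$; by Proposition \ref{prop:monodromy of strips}, $Z^\circ$ decomposes into three cylinders $C_0, C_1, C_2$ in direction $\theta$, which lift to strips $\tilde C_0, \tilde C_1, \tilde C_2$ in $S$ with deck group generators $\pm 2\v_0, \pm 2\v_1, \pm 2\v_2$ respectively. Any non-singular straight-line trajectory on $S$ projects to a non-singular straight-line trajectory on $Z^\circ$, which must lie in the interior of one of the $C_i$, because saddle connections parallel to $\theta$ form the cylinder boundaries and cannot support non-singular trajectories. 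Hence the lift lies in some $\tilde C_i$ and is invariant under $\tau_{2\v_i}$, so via the compatibility above the corresponding trajectory in $X_{\theta,+}$ is invariant under translation by $2\v_i$. This proves the first assertion. For the second, pick one non-singular trajectory in each strip $\tilde C_0, \tilde C_1, \tilde C_2$ and transfer under $\x$ to obtain trajectories of $T_{\theta,+}$ invariant under each of $2\v_0, 2\v_1, 2\v_2$. The only real subtlety is the compatibility $\x \circ \tau_\w = \sigma_\w \circ \x$, which requires nothing beyond chasing definitions through the construction of $\x$.
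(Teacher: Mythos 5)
Your proposal is correct and takes essentially the same route the paper intends: the paper states this corollary without proof as an immediate consequence of Proposition \ref{prop:monodromy of strips} (the three cylinders in a drift-periodic direction lift to strips with deck group generators $\pm 2\v_0$, $\pm 2\v_1$, $\pm 2\v_2$) combined with the orbit equivalence of Theorem \ref{thm:orbit equivalence} and the symmetry reduction of Proposition \ref{prop: standardization}. Your only addition is to make explicit the equivariance $\x \circ \tau_\w = \sigma_\w \circ \x$ between the deck group action on $S$ and translations of the tiling, which the paper uses implicitly (cf.\ the remark before Proposition \ref{prop:S order 3} identifying the deck group with isometries of the plane permuting rhombi), and your verification of it by chasing the labelled-rhombus construction of $\x$ is sound.
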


Notice that the drift-periodic trajectory on the right side of Figure \ref{fig:trajectory} is preserved by a horizontal translation of distance $2$; the preceding Corollary shows that every drift-periodic trajectory has this property, in one of the three directions parallel to edges of the tiling. This means that as we ``zoom out'' from a drift-periodic trajectory, it will converge to a line parallel
to one of the three directions $\v_0$, $\v_1$ or $\v_2$.

Recall that the torus $Z$ is a triple cover of the torus $Y$
formed from a single rhombus; see Figure \ref{fig:tori}.
So we have a chain of covers $S \to Z^\circ \to Y^\circ$,
where $Y^\circ$ represents $Y$ punctured at the identified vertices of the rhombus. The covers are all regular and
the covering $S \to Y^\circ$ has a deck group which is isomorphic to a semidirect product of $Z^2$ and $\Z/3\Z$: Concretely, this deck group is the group of orientation preserving isometries of the plane which permute the rhombi
in the decomposition of the hexagons while respecting
the notion of direction on the rhombi (see Figure \ref{fig:rhombi} and the discussion of direction below Corollary \ref{cor:translation surface}). Elements of this deck group  either have order three or are translations.

\begin{proposition}
\label{prop:S order 3}
Each cylinder of $S$ is invariant under an order three element of the deck group of the covering map $S \to Y^\circ$.
\end{proposition}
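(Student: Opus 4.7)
The plan is to reduce the statement to a single base case using transitivity of the Veech group. By Corollary \ref{cor:lattice directions on S}, every cylinder of $S$ has direction parallel to some $\w = m\e_1 + n\e_2 \in \Lambda_\vis$ with $m \not\equiv n \pmod 3$, i.e., $(m,n) \in \Z^2_\vis \smallsetminus \Xi$, and by Theorem \ref{thm:veech group} the Veech group $V(S) = V(Z^\circ)$ acts transitively on $\Z^2_\vis \smallsetminus \Xi$. So it suffices to treat the cylinders in the single direction $\v_1$ and then propagate via an affine automorphism of $S$.

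For the base case I would use the gluing rules \eqref{eq:gluings} to identify the $\v_1$-direction cylinders on $S$ as the triples $C_\c = R^0_\c \cup R^1_{\c+2\v_0} \cup R^2_{\c-2\v_2}$ for $\c \in 2\Lambda$, which is exactly the cylinder decomposition pictured in Figure \ref{fig:cylinder}. The three rhombi making up each $C_\c$ share an edge with a common equilateral triangle of the plane tiling, and I would check by a short direct calculation that rotation by $2\pi/3$ about the center $\tau_\c$ of this triangle cyclically permutes them while respecting the edge labels, using that rotation by $2\pi/3$ sends $\e_i \mapsto \e_{i+1}$ and that label $1$ of $R^i_\c$ is the edge leaving the center in direction $-\e_{i+1}$. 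A further direct computation confirms that $(I-R)\tau_\c \in 2\Lambda$ (where $R$ denotes rotation by $2\pi/3$), so the rotation belongs to the deck group $\Z/3 \ltimes 2\Lambda$ of $S \to Y^\circ$; call the resulting order $3$ deck element $\sigma_\c$.

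For the propagation step, let $\tilde C$ be any cylinder on $S$ in a direction parallel to such a $\w$. By Theorem \ref{thm:veech group} choose $M \in V(S)$ with $M(1,0) = (m,n)$ and lift $M$ to $\tilde f \in \Aff(S)$ via Lemma \ref{lem:lifting}. Then $\tilde f^{-1}(\tilde C) = C_\c$ for some $\c$, and the conjugate $\tilde f \circ \sigma_\c \circ \tilde f^{-1}$ preserves $\tilde C$, has order $3$, and lies in $\Trans(S)$ since conjugation by an affine automorphism preserves translation automorphisms. To conclude it lies in the deck group I would use the identification $\Trans(S) = \mathrm{Deck}(S \to Y^\circ)$: any translation automorphism of $S$ descends to a translation of the punctured torus $Y^\circ$, and the unique translation of $Y$ fixing the puncture is the identity. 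The main obstacle I foresee is precisely this identification; while not deep, it must be argued directly from the concrete description of the deck group in the excerpt as the orientation-preserving plane isometries permuting rhombi and respecting direction, since individual affine automorphisms of $Z^\circ$ need not descend through the quotient $Z^\circ \to Y^\circ$.
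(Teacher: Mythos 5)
Your proof is correct and follows essentially the same route as the paper's: reduce to the direction $\v_1$ via transitivity of the Veech group action on $\Z^2_\vis \smallsetminus \Xi$ (Theorem \ref{thm:veech group}, lifted by Lemma \ref{lem:lifting}), identify the $\v_1$-direction cylinders as triples of rhombi arranged around downward-pointing triangles, and conjugate the order-three rotation permuting them by the affine automorphism carrying $\v_1$ to the given cylinder direction. The only place you go beyond the paper is in explicitly verifying that the conjugate $\tilde f \circ \sigma_\c \circ \tilde f^{-1}$ belongs to the deck group of $S \to Y^\circ$ (the paper's proof takes this for granted after noting the deck group consists of the tiling symmetries permuting rhombi), and your identification $\Trans(S) = \mathrm{Deck}(S \to Y^\circ)$, argued from the fact that a translation automorphism must permute the labeled rhombi compatibly with the projection to $Y^\circ$, is a sound way to fill in that step.
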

\begin{proof}
Let $C$ be a maximal cylinder of $S$. Then by Corollary \ref{cor:lattice directions on S},
core curves of $C$ are parallel to a vector
$\w=m \e_1+n \e_2 \in \Lambda_\vis$
with $m \not \equiv n \pmod{3}$. By Theorem \ref{thm:veech group} there is an element $M \in V(Z^\circ)=V(S)$ so that $M(\v_1)=\w$. Let $f:S \to S$
be an affine automorphism with derivative $M$. Then
$f^{-1}(C)$ is a maximal cylinder of $S$ in direction $\v_1$. Such cylinders in direction $\v_1$ consist of three rhombi, which are arranged symmetrically around a downward triangle in the tiling. (Triples of rhombi of this form are colored in the same way in Figure \ref{fig:pinwheel}.) Let $\delta:S \to S$ be the order three rotation of the tiling which permutes the three rhombi forming $f^{-1}(C)$. Then the cylinder $C$ is preserved by the order three element $f \circ \delta \circ f^{-1}$. \comref{The transformation you want seems inverse should be last, not first.}
\compat{The referee was right. It is now corrected.}
\end{proof}

\begin{corollary}\label{cor:periodic-order-three}
Each periodic trajectory of the tiling billiard $T:X \to X$ is invariant under a rotational symmetry of the tiling of period three but is not invariant under a rotational symmetry of period six.
\end{corollary}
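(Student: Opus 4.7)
The plan is to combine Theorem \ref{thm:orbit equivalence} with Proposition \ref{prop:S order 3} to transfer the order-three cylinder symmetry on $S$ into a plane rotational symmetry of $\gamma$, and then to rule out order-six symmetries using the direction restrictions of Lemma \ref{lem:three-angles}. By Proposition \ref{prop: standardization} I may assume that our periodic trajectory satisfies $\gamma \subset X_{\theta,+}$ with $\theta\in[\frac{\pi}{3},\frac{2\pi}{3}]$, and by Theorem \ref{thm:orbit equivalence} its preimage $\gamma'=\x^{-1}(\gamma)$ is a closed straight-line trajectory of $F_\theta$ contained in some maximal cylinder $C\subset S$.

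Proposition \ref{prop:S order 3} supplies an order-three element $\phi$ of the deck group of $S\to Y^\circ$ preserving $C$. Since this deck group is realized as the group of orientation-preserving plane isometries permuting the rhombi while respecting direction, any order-three element must be a rotation by $\pm\frac{2\pi}{3}$ about a fixed point of the tiling's three-fold symmetry (a hexagon or triangle center), which is precisely a rotational symmetry of the tiling of period three. On $C$, the deck transformation $\phi$ is an orientation-preserving isometry of the flat cylinder preserving the core direction, hence a translation along the core; being of order three and acting non-trivially (as its explicit description $\phi=f\circ\delta\circ f^{-1}$ cyclically permutes three rhombi of $C$), it is a translation by $c/3$ along the core, where $c$ is the circumference of $C$, and so preserves every closed trajectory in $C$, in particular $\gamma'$. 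Because the orbit equivalence $\x$ is built geometrically from the tiling, it intertwines each deck transformation with the corresponding plane isometry; hence the plane rotation associated to $\phi$ preserves $\gamma$, establishing the first claim.

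For the second claim, I would argue by contradiction: suppose $\gamma$ is invariant under a rotational symmetry of period six, necessarily a rotation $\rho$ by $\frac{\pi}{3}$ about a hexagon center. The action of $\rho$ on $X$ shifts every direction by $\frac{\pi}{3}$, so $\rho(X_{\theta,+}) = X_{\theta+\pi/3,+}$. By Lemma \ref{lem:three-angles} the hexagon-direction sets $\{\theta,\theta+\frac{2\pi}{3},\theta+\frac{4\pi}{3}\}$ and $\{\theta+\frac{\pi}{3},\theta+\pi,\theta+\frac{5\pi}{3}\}$ are disjoint, so $X_{\theta,+}\cap X_{\theta+\pi/3,+}=\emptyset$; hence $\rho(\gamma)\cap\gamma=\emptyset$, contradicting invariance. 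The main obstacle is to confirm that $\phi$ genuinely descends to a plane rotation and acts non-trivially on $C$; the former reduces to observing that every translation automorphism of $S$ lies in the deck group of $S\to Y^\circ$, while the latter is immediate from the explicit construction of $\phi$ in the proof of Proposition \ref{prop:S order 3}.
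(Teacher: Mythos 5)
Your proof is correct and takes essentially the same route as the paper: period-three invariance comes from Proposition \ref{prop:S order 3} together with the identification of the deck group of $S \to Y^\circ$ with tiling symmetries (transferred through the orbit equivalence of Theorem \ref{thm:orbit equivalence}), and order-six invariance is ruled out by the three-direction restriction of Lemma \ref{lem:three-angles}, which is exactly the paper's argument. You merely make explicit details the paper leaves implicit --- note only that the order-three rotation of the cylinder is by $c/3$ \emph{or} $2c/3$, either of which preserves every core circle, so the conclusion is unaffected.
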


Notice the order-three rotational symmetry of the periodic trajectory on the right side of Figure \ref{fig:trajectory}; this Corollary shows that all periodic trajectories have this property.

\begin{proof}
Period three invariance follows from Proposition \ref{prop:S order 3} since the deck group corresponds to symmetries of the tiling as noted above. Order six symmetry is impossible because the portions of a trajectory inside hexagons only travel in three directions; see Lemma \ref{lem:three-angles}.
\end{proof}

\section{The ergodicity criterion of Hubert and Weiss}
\label{sect:hubert weiss}

\subsection{The ergodicity criterion}
\label{sect:ergodicity criterion}

For this section, we will temporarily work in a more general context. 
Let $X$ be a compact translation surface $X$ and let $\Sigma \subset X$ be a finite collection of points containing the cone singularities of $X$. Let $S$ be a $\G$-cover of a compact translation surface $X^\circ  = X \smallsetminus \Sigma$. That is 
$S$ is a cover of $X^\circ$ with deck group $\G$ and $\G \backslash S= X^\circ$. We will only consider the case where $\G$ is abelian.

A cylinder in $X^\circ$ is said to be {\em maximal} if it is not contained in any other cylinder, i.e., it can not be nested inside another cylinder with the same circumference and holonomy. If $C \subset X^\circ$ is a cylinder, then the preimage of $C$
under the covering map $S \to X^\circ$ is either a disjoint union of cylinders or a disjoint union of strips.
Fix $C$ and choose a closed curve $\gamma:[0,1] \to C$ running once around the cylinder. Let $\tilde \gamma:[0,1] \to S$ be a lift. Then there is a $G_C \in G$ so that $G_C \cdot \tilde \gamma(0)= \tilde \gamma(1)$. 
Since $\G$ is abelian, the value of $G_C$ depends only on the direction $\gamma$ wraps around $C$. The cylinder $C$ lifts to strips if and only if 
$\langle G_C \rangle \subset \G$ is isomorphic to $\Z$. We suppress $\gamma$ from our notation for $G_C$ and just assume that our cylinders come with the choice of an isotopy class of oriented core curves.

We have the following definition:
\begin{definition}
Let $\theta$ be a direction for straight-line flow and let $G \in \G$.
We say the pair $(\theta, G)$ is {\em well-approximated by strips} on $S$ if there is an $\epsilon>0$ and infinitely many maximal cylinders $C$ on $X^\circ$ 
with areas bounded from below by some positive number
satisfy $G_C=G$ and
\begin{equation}
\label{eq: well-approximated}
|\hol~C| \cdot \big|\u_\theta \wedge \hol(C)\big| \leq (1-\epsilon) \Area(C).
\end{equation}
\end{definition}
An equivalent definition appears in \cite[\S 1]{hubert weiss} and \cite[Def. 5]{artigiani}
except that in those articles the right side of \eqref{eq: well-approximated} was scaled by a factor of $\frac{1}{2}$. 

We are also not assuming that $C$ lifts to strips because it is unnecessary for the proof and allowing torsion may be useful in some cases. (The work of
Hubert and Weiss
\cite{hubert weiss} considered $\G \cong \Z$ while Artigiani \cite{artigiani} considered $\Z^2$. The generalization to abelian groups
clearly follows.) In the case $G$ has finite order
``well-approximated by strips'' should be ``well-approximated by finite covers of cylinders'' but we will not concern ourselves with semantics.

We have the following which is a strengthening of \cite[Theorem 1]{hubert weiss} and \cite[Proposition 7]{artigiani}
because of the aforementioned lack of $\frac{1}{2}$ in \eqref{eq: well-approximated}). We thank Barak Weiss for pointing out that this factor of $\frac{1}{2}$ could be removed.

\begin{theorem}[Hubert-Weiss ergodicity criterion]
\label{thm:ergodicity criterion}
Suppose $S \to X$ is a $\G$-cover where $\G$ is abelian as above. Suppose $\langle G_1, \ldots, G_n \rangle$ is a finite index subgroup of $\G$ and that $\theta$ is an ergodic direction for straight-line flow on the finite cover of $X$ obtained by $\langle G_1, \ldots, G_n \rangle \backslash S$.
If for each $i$, $(\theta,G_i)$ is well-approximated by strips on $S$, then $\theta$ is an ergodic direction on $S$.
\end{theorem}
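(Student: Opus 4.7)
The plan is to follow the strip-shadowing strategy of Hubert-Weiss and Artigiani, tracking constants more carefully to remove the factor of $\frac{1}{2}$. The overall architecture is: reduce ergodicity on $S$ to a.e.\ $G_i$-invariance of $T_\theta$-invariant $L^\infty$ functions, extract that invariance from a shadowing relation supplied by the well-approximating cylinders, and convert shadowing into exact invariance via a density argument.

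First I would reduce. Let $f \in L^\infty(S)$ be $T_\theta$-invariant. If one can show $f \circ G_i = f$ a.e.\ for each $i$, then $f$ descends to a $T_\theta$-invariant function on the finite cover $\langle G_1,\ldots,G_n\rangle \backslash S$, which is ergodic by hypothesis, forcing $f$ to be a.e.\ constant. Thus it suffices to fix $i$ and establish $f \circ G_i = f$. From the well-approximation hypothesis I extract an infinite sequence of maximal cylinders $C_k \subset X^\circ$ with $G_{C_k} = G_i$, $\Area(C_k) \geq \delta$, and $\ell_k \sin\phi_k \leq (1-\epsilon) h_k$, where $\ell_k = |\hol C_k|$, $h_k$ is the height, and $\phi_k$ is the angle between $\u_\theta$ and $\hol C_k$. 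Since there are only finitely many cylinders of bounded length on compact $X$, one must have $\ell_k \to \infty$, forcing $h_k \to 0$ and then $\phi_k \to 0$, $\cos\phi_k \to 1$.

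Next comes the shadowing step. Lift $C_k$ to $\widetilde C_k \subset S$ (a strip if $G_i$ has infinite order, a finite cyclic cover of $C_k$ otherwise) and parametrize so that $\hol C_k$ is horizontal and $G_i$ acts by $(x,y) \mapsto (x+\ell_k, y)$. For any $(x, y) \in \widetilde C_k$ with $y + \ell_k\tan\phi_k < h_k$, the flow $T^{\ell_k/\cos\phi_k}_\theta(x, y) = (x + \ell_k, y + \ell_k\tan\phi_k)$ remains in $\widetilde C_k$ and equals $G_i \cdot (x, y + \ell_k\tan\phi_k)$. By $T_\theta$-invariance of $f$,
\begin{equation*}
f(x, y) = f\bigl(G_i \cdot (x, y + \ell_k\tan\phi_k)\bigr).
\end{equation*}
The set of such $(x,y)$ occupies a fraction $\geq 1 - (1-\epsilon)/\cos\phi_k$ of $\widetilde C_k$, which tends to $\epsilon$. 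This is precisely where dropping the $\frac{1}{2}$ pays off: by exploiting the full strip rather than only half, the useful fraction is $\epsilon$ rather than $\epsilon/2$, and it is on this set that the rest of the argument takes place.

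Finally I would convert shadowing into exact $G_i$-invariance. The defect $\Delta f = f \circ G_i - f$ is itself $T_\theta$-invariant, and the displayed identity rewrites as $\Delta f(x, y + \ell_k \tan\phi_k) = f(x, y) - f(x, y + \ell_k\tan\phi_k)$ on the useful set, with the transverse shift $\ell_k\tan\phi_k \to 0$. An $L^2$-spectral argument with respect to the abelian $\G$-action (decomposing $L^2(S)$ as a direct integral over characters $\chi \in \hat{\G}$) reduces the problem to showing that any character-$\chi$ component which is $T_\theta$-invariant must vanish whenever $\chi(G_i) \neq 1$: applying the shadowing identity on $\widetilde C_k$ to such a component introduces a phase $\chi(G_i) \neq 1$, and averaging over the uniform-lower-density useful set as $k \to \infty$ and $\ell_k\tan\phi_k \to 0$ yields the desired $L^2$ contradiction. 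The hardest part is precisely this final averaging: one must combine a Birkhoff argument on the ergodic finite cover (to see that almost every $T_\theta$-orbit visits a positive fraction of infinitely many $C_k$) with the uniform lower bound $\epsilon$ on the useful fraction inside each $C_k$, and the vanishing shift $\ell_k\tan\phi_k$, to push the approximate $G_i$-invariance of $\Delta f$ to an exact one in the limit.
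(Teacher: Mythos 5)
Your geometric setup is sound and matches the paper's: the asymptotics $\ell_k \to \infty$, $h_k \to 0$, $\phi_k \to 0$, the shadowing identity inside the lifted strip, and the useful-fraction computation $1-(1-\epsilon)/\cos\phi_k \to \epsilon$ are all correct, and the last of these is indeed exactly where the removal of the factor $\frac{1}{2}$ pays off (the paper encodes the same estimate by fixing $r=1-\frac{\epsilon}{2}$ and showing the overlap of $\tilde F^{a_{n_k}}(\tilde B_k)$ with $G(\tilde B_k)$ has height at least $\frac{\epsilon}{2}w_{n_k}$). The initial reduction is also fine: a bounded flow-invariant function that is $G_i$-invariant for each $i$ descends to the finite cover, which is ergodic by hypothesis.

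The genuine gap is your final spectral step, in two ways. First, a $T_\theta$-invariant $f\in L^\infty(S)$ is not in $L^2(S)$, since $S$ has infinite Lebesgue measure; likewise a nonzero $\chi$-equivariant function has $\G$-invariant modulus and so is never square-integrable. Hence there is no direct-integral decomposition of $f$ over $\hat{\G}$ to which your argument can be applied, and the diverging sums $\sum_{G\in\G} f(G\tilde x)\overline{\chi(G)}$ cannot define its components. Second, and more fundamentally, what your twisted-component argument would establish is that $\chi\circ\alpha$ is not a multiplicative coboundary for any character with $\chi(G_i)\neq 1$ (where $\alpha$ is the $\G$-valued cocycle of the skew product); but for non-compact abelian extensions this is strictly weaker than ergodicity --- the coboundary criterion is only sufficient for \emph{non}-ergodicity, and non-regular cocycles exist which admit no nontrivial twisted coboundary yet give non-ergodic extensions. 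The tool that closes precisely this gap is Schmidt's theory of essential values, and that is the paper's route: the shadowing/overlap geometry you set up is used, via a density point of an \emph{arbitrary} positive-measure set $A$ (not merely a level set of an invariant function), to verify that each $G_i$ is an essential value of $\alpha$ (Lemma \ref{lem:essential value}), after which \cite[Cor.~5.4]{schmidt} converts essential values plus ergodicity of the finite cover into ergodicity on $S$. Your proposed ``final averaging'' would in effect have to reprove this theorem; as written, the passage from approximate $G_i$-invariance of $\Delta f$ on sets of fraction tending to $\epsilon$ inside cylinders of vanishing size to exact a.e.\ invariance is not justified. Relatedly, the Birkhoff statement you invoke (a.e.\ orbit visits a positive fraction of infinitely many $C_k$) is not quite the right lemma: what is needed, and what the paper proves using ergodicity of the base flow and the near-invariance of the central sub-cylinders $C_n(r)$ under $F^s$, is the spatial statement that $\limsup_{n} C_n(r)$ has full measure, which then feeds the density-point argument.
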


\subsection{Proof of the ergodicity criterion}
We will follow the approach of Hubert and Weiss. See in particular \S 2.4 and \S 3.1 of \cite{hubert weiss}.
Let $F^s:X^\circ \to X^\circ$ and $\tilde F^s:S \to S$ be the straight-line flows in direction $\theta$. We do not define the flows through $\Sigma$, but this affects only a set of zero measure.

The flow $\tilde F^s$ is measurably conjugate to a $\G$-valued skew product over the flow $F^s$.  To see this, select a basepoint $x_0 \in X^\circ$ and make a choice of a path $\beta_{x}$ in $X^\circ$ starting at $x_0$ and ending at $x$. Then for any $x \in X^\circ$ and $s \in S$ we may define the loop $\gamma_{x,s}:[0,1] \to X^\circ$ by first following $\beta_{x}$ then following the trajectory $F^{[0,s]}(x)$ forward (or $F^{[s,0]}(x)$ backward if $s<0$) and finally moving backward over $\beta_{F^s(x)}$ returning to $x_0$.  Select a lift $\tilde x_0 \in S$ of $x_0 \in X^\circ$, and let $\tilde \gamma_{x,s}:[0,1] \to S$ be the lift of $\gamma_{x,s}$ that begins at $\tilde x_0$. We define the cocycle
$$\alpha: X \times \R \to \G \quad \text{so that $\alpha(x,s) \in \G$ satisfies} \quad
\tilde \gamma_{x,s}(1)=\alpha(x,s)(\tilde x_0).$$
This choice makes $\tilde F^s$ is measurably conjugate to the skew product
$$\bar F^s: X \times \G \to X \times \G; \quad \bar F^s(x,G) = \big(F^s(x), G+\alpha(x,s)\big).$$

Let $\mu$ and be Lebesgue measure on $X$ and $\tilde \mu$ be the measure on $X \times \G$ which is the product of $\mu$ and the counting measure on $\G$. The conjugacy between $\tilde F^s$ and $\bar F^s$ carries $\tilde \mu$ to Lebesgue measure on $S$. 

Let $\G$ be a discrete group.
A group element $G \in \G$ is an {\em essential value} for the cocycle $\alpha$ if for any $A \subset X$ with $\mu(A)>0$, there is a set of $s$ in $\R$ of Lebesgue positive measure for which 
$$\mu \big( \{x \in A:~~\textrm{$F^s(x) \in A$~~and~~$\alpha(x,s)=G$}\}\big) >0.$$
The following is a consequence of work of Schmidt \cite[Cor. 5.4]{schmidt}:

\begin{theorem}
Assume $\G$ is a discrete abelian abelian group and $G_1, \ldots, G_k \in \G$ are essential values. Then $\bar F^s$ is ergodic if and only if 
the induced action of $\bar F^s$ on $X \times (\langle G_1, \ldots, G_k\rangle \backslash \G)$ is ergodic.
\end{theorem}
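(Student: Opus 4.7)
The plan is to deduce this from the general theory of essential values of cocycles with values in a locally compact abelian group, specifically from \cite[Cor. 5.4]{schmidt}. Let $H = \langle G_1, \ldots, G_k \rangle \subset \G$, let $\pi : \G \to \G/H$ denote the projection, and consider the quotient cocycle $\bar \alpha = \pi \circ \alpha$, which generates the induced skew product on $X \times (\G/H)$ as a measure-theoretic factor of $\bar F^s$.

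The forward implication is immediate: a factor of an ergodic system is ergodic. For the converse, the idea is to introduce the \emph{group of essential values} $E(\alpha) \subset \G$ of the cocycle $\alpha$. Schmidt's theorem provides two facts we need. First, $E(\alpha)$ is a closed subgroup of $\G$, and up to cohomology one may replace $\alpha$ by an $E(\alpha)$-valued cocycle, so that ergodicity of $\bar F^s$ on $X \times \G$ is equivalent to $E(\alpha) = \G$. Second, essential values behave well under passage to quotients: if $H \subset E(\alpha)$, then the essential values of the quotient cocycle $\bar \alpha$ are precisely $\pi(E(\alpha)) \subset \G/H$.

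Granting these two facts, the converse direction becomes a short algebraic computation. By hypothesis each $G_i$ is an essential value, so $H \subset E(\alpha)$. Applying Schmidt's criterion to the quotient skew product, ergodicity of $\bar F^s$ on $X \times (\G/H)$ is equivalent to $\pi(E(\alpha)) = \G/H$, i.e. $E(\alpha) \cdot H = \G$. Since $H \subset E(\alpha)$, this collapses to $E(\alpha) = \G$, which in turn is equivalent (by Schmidt applied to $\alpha$ itself) to ergodicity of $\bar F^s$ on $X \times \G$.

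The main obstacle I expect is the second fact above, that essential values of the quotient cocycle are exactly $\pi(E(\alpha))$ when $H \subset E(\alpha)$. The inclusion $\pi(E(\alpha)) \subset E(\bar \alpha)$ is a direct unwinding of definitions, but the reverse inclusion requires showing that any essential value $\bar G \in E(\bar\alpha)$ can be lifted to an essential value of $\alpha$ modulo $H$; this uses the absorbing property of $E(\alpha)$ together with the assumption $H \subset E(\alpha)$ to promote a sequence of near-matches for $\bar G$ in the quotient into a genuine essential value in $\G$. A secondary technical point is to make sure the hypotheses of \cite[Cor. 5.4]{schmidt}—notably ergodicity of the base flow $F^s$—are met in the form required by our application; in context, $\theta$ will always be chosen so that the base straight-line flow on $X$ is ergodic, so this does not pose a real difficulty.
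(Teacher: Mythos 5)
Your proposal is correct and takes essentially the same approach as the paper: the paper gives no written proof, stating the theorem as a direct consequence of Schmidt \cite[Cor.\ 5.4]{schmidt}, and your deduction via the group of essential values $E(\alpha)$, Schmidt's criterion that ergodicity of the skew product is equivalent to $E(\alpha)=\G$, and the identity $E(\pi\circ\alpha)=\pi(E(\alpha))$ for $H\subset E(\alpha)$ is exactly the standard unwinding of that citation. The technical points you flag (lifting essential values from the quotient cocycle, and ergodicity of the base, which in either direction of the equivalence comes for free since the base is a factor of whichever system is assumed ergodic) are resolved in Schmidt's work rather than in the paper itself.
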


Note that we are stated the definition of essential value and the above theorem in the context of discrete groups, both this definition and result above extend with some modification to the indiscrete setting. See \cite{schmidt} for details.

Theorem \ref{thm:ergodicity criterion} then follows from the following:

\begin{lemma}
\label{lem:essential value}
If $(\theta, G)$ is well-approximated by strips, then $G$ is an essential value.
\end{lemma}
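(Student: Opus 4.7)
The plan is to follow the Hubert--Weiss strategy and produce, for an arbitrary measurable $A \subset X$ with $\mu(A) > 0$, a positive-measure set of times $s$ for which $\mu(\{x \in A : F^s(x) \in A,\ \alpha(x,s) = G\})>0$.

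First I would extract the geometric content of the well-approximation hypothesis. Fix a cylinder $C$ in the family and parameterize it isometrically as $\R/c_C\Z \times (0,h_C)$, with $c_C = |\hol(C)|$ and $h_C$ the height. The straight-line flow $F^t$ in direction $\theta$ crosses $C$ at a constant angle $\phi_C$ satisfying $c_C|\sin\phi_C| = |\u_\theta \wedge \hol(C)|$, so the hypothesis \eqref{eq: well-approximated} reads $c_C|\sin\phi_C| \leq (1-\epsilon)h_C$. Consequently the substrip
\[
C^\ast = \{p \in C :\ \text{the $F^t$-orbit of $p$ first returns to $C$ along the core direction}\}
\]
has $\mu(C^\ast) \geq \epsilon\,\Area(C) \geq \epsilon a$, where $a>0$ is the uniform lower bound on areas. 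By the defining property of $G_C$, the cocycle $\alpha(p,\tau_C(p))$ equals $G$ for every $p \in C^\ast$, and the return times $\tau_C(p)$ vary only over a bounded interval $I_C$.

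Second, I would use a Lebesgue density argument together with the infinitude of the family. Pick a density point $x_0 \in A$ and, for small $\delta>0$, a flow-box $U$ around $x_0$ with $\mu(A\cap U) \geq (1-\delta)\mu(U)$. Because $X^\circ$ has finite area and the family contains infinitely many $C$ of area $\geq a$, the core directions $\psi_C$ accumulate, so one can select $C$ whose core direction lies arbitrarily close to $\theta$ and whose transverse scale is commensurate with $U$. By Fubini along the $F^s$-orbit foliation, a positive-mass subset $E \subset A \cap U \cap C^\ast$ has the property that $F^{\tau_C(\cdot)}(E)$ returns into $A$; on $E$ the cocycle is identically $G$. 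Since $F^s$ depends continuously on $s$, the measure $\mu(\{x \in A : F^s(x) \in A,\ \alpha(x,s)=G\})$ remains positive for $s$ ranging over a sub-interval of $I_C$ of positive Lebesgue measure, which is the essential-value condition for $G$.

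The main obstacle is Step~2: the perpendicular drift $c_C|\sin \phi_C|$ per return is only bounded by $(1-\epsilon)h_C$, not forced to be small, so one cannot conclude directly that ``most'' of $A \cap C^\ast$ returns to $A$ under $F^{\tau_C}$. The resolution is to exploit the abundance of cylinders in the family: choose $C$ whose geometry (height, perpendicular drift, and direction) is tuned to the density box $U$ around $x_0$, so that the geometry of the return map on $C^\ast$ is compatible with the density hypothesis $\mu(A\cap U)\geq(1-\delta)\mu(U)$. This is the delicate computation in \cite[\S 2.4--3.1]{hubert weiss} and \cite[\S 4]{artigiani}; removing the factor of $\tfrac{1}{2}$ in \eqref{eq: well-approximated} corresponds to sharpening exactly this step in their argument.
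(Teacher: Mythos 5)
There is a genuine gap, and it sits exactly where you flag your ``main obstacle'': the selection step. Your plan needs a positive-measure set $E \subset A \cap U \cap C^\ast$ for some cylinder $C$ in the family, but nothing in your argument guarantees that the density box $U$ (or indeed the set $A$ at all) meets any of the cylinders $C_n$, let alone their good parts $C^\ast$ at a scale commensurate with $U$. The hypotheses give infinitely many distinct maximal cylinders of area at least $a$, which (since circumferences must go to infinity, cf.\ \eqref{eq:asymptotics}) forces widths to shrink and core directions to converge to $\theta$ --- but it only yields $\mu\big(\limsup_n C_n\big) \geq a$, not full measure. An arbitrary positive-measure $A$ could a priori be essentially disjoint from every $C_n$, and then no choice of $C$ ``tuned to $U$'' exists. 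Symptomatically, your proposal never invokes the standing hypothesis of Theorem \ref{thm:ergodicity criterion} that the flow in direction $\theta$ is ergodic on the base (it is ergodic on the finite cover, hence on $X$); the lemma's proof cannot close without using this, since ergodicity is precisely what rules out the cylinders clustering away from $A$.

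The paper closes this gap with a full-measure $\limsup$ statement (following \cite[Lemma 14]{hubert weiss}): for each $r<1$, the central subcylinders $C_n(r)$ of width $(1-r)w_n$ satisfy $\mu\big(\limsup_n C_n(r)\big)=\mu(X)$. The key observation is almost-invariance: for any fixed time $s$, flowing $C_n(r)$ moves it transversally to the core by at most $(1-\epsilon)|s|w_n/\sqrt{a_n^2+b_n^2} \to 0$, so $F^s\big(C_n(r)\big) \subset C_n(r')$ for all large $n$ whenever $r'<r$; hence $F^s(L_r) \subset L_{r'}$, the set $L_{r'}$ is essentially flow-invariant with positive measure, and ergodicity on $X$ upgrades this to full measure. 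Only then can one pick a density point of $A$ lying in $L_{1-\epsilon/2}$, i.e.\ lying in infinitely many shrinking central subcylinders, take balls $B_k$ of radius $\tfrac{1}{2}rw_{n_k}$ inside $C_{n_k}$, and run the overlap estimate. Note also that the paper's mechanism differs from your per-wrap return map on $C^\ast$: instead of tracking first returns with cocycle value $G$, it compares, on the cover, the flowed lift $\tilde F^{a_{n_k}}(\tilde B_k)$ with the deck translate $G(\tilde B_k)$; these differ by a vertical shift of $|b_{n_k}| \leq (1-\epsilon)w_{n_k}$ by \eqref{eq: well-approximated}, so they overlap in a lune of height at least $\tfrac{\epsilon}{2}w_{n_k}$ --- a proportion $\eta>0$ of $B_k$ uniform in $k$. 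Density at the point then beats $1-\eta$ for large $k$, and continuity of the overlap ratio in $s$ near $a_{n_k}$ produces the required positive-measure set of times. Your intuition that the geometry must be matched to the density scale is right, but the matching is delivered by the full-measure $\limsup$ proposition (almost-invariance plus base ergodicity), not by choosing a conveniently located cylinder, and the cover-level comparison with $G(\tilde B_k)$ is what certifies $\alpha(x,s)=G$ without the basepoint-path bookkeeping your return-map formulation would still owe.
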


The remainder of this subsection is devoted to the proof of this lemma.

To simplify our arguments, rotate the surface so that $\theta$ is horizontal. Fix $G \in \G$
and assume $(\theta, G)$ is well approximated by strips. This guarantees the existence of an $\epsilon>0$ and a sequence of distinct maximal cylinders $C_n \subset X^\circ$
with area bounded from below so that $G_{C_n} = G$ and so that by defining the width of $C_n$ to be $w_n>0$ and $\hol(C_n)=(a_n,b_n)$ 
we have
\begin{equation}
\label{eq: well-approximated 2}
|b_n| \leq (1-\epsilon) w_n.
\end{equation}
(The equation above is equivalent in this context to \eqref{eq: well-approximated}.)
Given any bound, a translation surface has only finitely many maximal cylinders whose circumference is below this bound. Thus we have 
\begin{equation}
\label{eq:asymptotics}
\lim_{n \to \infty} w_n=0, \quad \lim_{n \to \infty} b_n=0, \quad \text{and} \quad
\lim_{n \to \infty} |a_n| = +\infty.
\end{equation}

For any $r<1$ and any $n$, let $C_n(r)$ be the set of points $x \in C_n$ for which the open metric ball $B(x; \frac{1}{2} r w_n)$ centered at $x$ and of radius $\frac{1}{2} r w_n$ is contained in the interior of $C_n$. Observe that $C_n(r)$ is the closed central cylinder of $C_n$ of width $(1-r) w_n$ and in particular
\begin{equation}
\label{eq:area comparison}
\Area~C_n(r)=(1-r) \Area~C_n.
\end{equation}

\begin{proposition}
For any $r$ with $0<r<1$, the set $\limsup_{n \to \infty} C_n(r)$ has full Lebesgue measure on $X$. 
\end{proposition}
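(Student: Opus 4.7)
The plan is to exploit a near-invariance of $A_r := \limsup_n C_n(r)$ under the horizontal flow $F^s$ on $X$ and then invoke ergodicity of $F^s$. First I will show $F^s(A_r) \subseteq A_{r(1-\delta)}$ for every $s \in \mathbb R$ and every $\delta \in (0,1)$; together with ergodicity this will give $\mu(A_{r(1-\delta)}) = \mu(X)$ for all such pairs $(r,\delta)$, which is equivalent to the statement for all $r' \in (0,1)$.

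The near-invariance rests on a geometric estimate. Working in the natural flat coordinates for the cylinder $C_n$ (with the core direction horizontal and the width direction vertical), the horizontal direction of $X$ has vertical component of magnitude $|b_n|/|\hol(C_n)| \le (1-\epsilon) w_n/|\hol(C_n)|$ by the well-approximation inequality \eqref{eq: well-approximated 2}. A point of $C_n(r)$ sits at cylinder-vertical distance at least $rw_n/2$ from $\partial C_n$, so after horizontal flow time $s$ it has moved in cylinder coordinates by at most $|s|(1-\epsilon) w_n/|\hol(C_n)|$. Since $|\hol(C_n)| \to \infty$, for every fixed $(s,\delta)$ this displacement is at most $r\delta w_n/2$ for all large $n$, placing the image in $C_n(r(1-\delta))$. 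Passing to the limsup over $n$ yields $F^s(A_r) \subseteq A_{r(1-\delta)}$.

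Next, $\mu(A_r)>0$: the decreasing sets $\bigcup_{n \ge N} C_n(r)$ each contain $C_N(r)$ of measure $(1-r)\Area(C_N) \ge (1-r)A_0$, where $A_0 > 0$ is the uniform lower bound on cylinder areas, so by continuity of measure $\mu(A_r) \ge (1-r)A_0$. The $F^s$-saturation $B_r := \bigcup_{s} F^s(A_r)$ is therefore $F$-invariant of positive measure, and from the previous paragraph $B_r \subseteq A_{r(1-\delta)}$ for every $\delta \in (0,1)$. Ergodicity of $F^s$ on $(X,\mu)$ forces $\mu(B_r) = \mu(X)$, so $\mu(A_{r(1-\delta)}) = \mu(X)$. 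Given any $r' \in (0,1)$, choosing $r = (1+r')/2 \in (r',1)$ and $\delta = (1-r')/(1+r')$ gives $r(1-\delta) = r'$, completing the argument.

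The hard part is the ergodicity input: this is not automatic for arbitrary $\theta$ on an arbitrary compact translation surface. In the Hubert--Weiss setting, the well-approximating cylinders themselves provide recurrence of the Teichm\"uller geodesic $g_t(X,\theta)$ in moduli space---rescaling $C_n$ to a cylinder of side $\sim\sqrt{\Area(C_n)}$ shows each $g_{t_n} X$ contains a cylinder of bounded size---and Masur's criterion then yields ergodicity. In the paper's specific application where $X = Z^\circ$ is a flat torus, ergodicity is immediate from the rational/irrational slope dichotomy, since $|\hol(C_n)|\to \infty$ for the distinct maximal cylinders $C_n$ forces $\theta$ to be of irrational slope and therefore uniquely ergodic.
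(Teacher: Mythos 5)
Your argument is correct and essentially identical to the paper's own proof: the same transverse-displacement estimate (displacement $|s||b_n|/\sqrt{a_n^2+b_n^2} \le (1-\epsilon)|s|w_n/|\hol(C_n)|$, negligible relative to $w_n$ since $|\hol(C_n)| \to \infty$) yields $F^s\big(C_n(r)\big) \subset C_n(r')$ for all large $n$, positivity of $\mu\big(\limsup_n C_n(r)\big)$ comes from the uniform area lower bound, and ergodicity of $F^s$ on $X$ upgrades this to full measure, with your reparametrization $r' = r(1-\delta)$ playing exactly the role of the paper's ``for every $r' < r$.'' The only questionable part is your closing paragraph, which is unnecessary: ergodicity of the direction $\theta$ on $X$ is a standing hypothesis at this point (Theorem \ref{thm:ergodicity criterion} assumes ergodicity on the finite cover $\langle G_1, \ldots, G_n\rangle \backslash S$, which factors onto $X$), and your general Masur-criterion sketch is actually shaky as stated, since a single cylinder of bounded circumference and area on $g_{t_n}X$ does not by itself force recurrence of the Teichm\"uller geodesic to a compact subset of the stratum (the rest of the surface could degenerate), although in the lattice case the paper's Lemma \ref{lem:equivalence} does supply the needed recurrence and your torus remark about $Z^\circ$ is fine.
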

The proof mirrors the proof of \cite[Lemma 14]{hubert weiss}. 

\begin{proof}
Let $m$ denote the Lebesgue measure on $X$. Set $L_r = \limsup_{n \to \infty} C_n(r)$. 
From the lower bound on areas of cylinders and \eqref{eq:area comparison}, we see that that $\mu(L_r) \geq \limsup \mu\big(C_n(r)\big)>0$ for all $r$.

The key observation is that for every $s \in \R$ and every $r'<r$ we have
$$F^s\big(C_n(r)\big) \subset C_n(r') \quad \text{for $n$ sufficiently large,}$$
where $F$ denotes the flow on $X$ in direction $\theta$. 
Indeed if $A \subset B$ are nested cylinders and $\v=\hol(B)$, then the image of $A$ under straight-line flow by vector $\w$ 
depends only on the projection of $\w$ onto the direction orthogonal to $\v$. In our setting we see $F^s\big(C_n(r)\big)$
is the same as the image of $C_n(r)$ under straight-line flow by the vector
$$\mathit{proj}_{(b_n,-a_n)} (s,0) \quad \text{which has norm} \quad \frac{|s b_n|}{\sqrt{a_n^2+b_n^2}} \leq \frac{(1-\epsilon)|s|w_n}{\sqrt{a_n^2+b_n^2}}$$
(as long as the images of $C_n(r)$ stay contained in $C_n$). Recalling that $C_n(r')$ is the central cylinder in $C_n$ of width $(1-r')w_n$ we see that
$$F^s\big(C_n(r)\big) \subset C_n(r') \quad \text{whenever} \quad 0<r'<r-\frac{2(1-\epsilon)|s|}{\sqrt{a_n^2+b_n^2}}.$$
From the limiting information of \eqref{eq:asymptotics}, we see that our key observation holds.

From the key observation it follows that for each $s$ and each $r'<r$ we have
$$F^s(L_r)=\limsup_n F^s\big(C_n(r)\big) \subset \limsup_n C_n(r') = L_{r'}.$$
This holds independent of $s$ so we see that that the $F^s$-orbit of $L_r$ is contained in $L_{r'}$.
By ergodicity of the flow in direction $\theta$ we have that $\mu(L_{r'})=\mu(X)$ since $\mu(L_{r'})>0$. 
This holds for all $r'$.
\end{proof}

\begin{proof}[Proof of Lemma \ref{lem:essential value}]
Continue using the notation as above. For this proof fix $r$ to be the value $1-\frac{\epsilon}{2}$ (where $\epsilon$ comes from the definition of well-approximation).
Let $L=\limsup_{n \to \infty} C_n(r)$ which has full measure from the prior proposition. To verify that $(\theta, G)$ is an essential value, fix a measurable
$A \subset X$ with $\mu(A)>0$. Let $x$ be a density point of $A$ which also lies in the full measure set $L$. That is, we insist
\begin{equation}
\label{eq:density}
\lim_{R \to 0} \frac{\mu\big(B(x;R) \cap A\big)}{\mu\big(B(x;R)\big)}=1.
\end{equation}

Since $a \in L$, there is an increasing sequence of integers $n_k$ so that $a \in C_{n_k}(r)$ for each $k$. We set
$B_k = B(x; \frac{1}{2} r w_{n_k})$ and by definition of $C_{n_k}(r)$ we see that $B_k \subset C_{n_k}$. Set $A_k = A \cap B_k$.
Observe that by \eqref{eq:asymptotics}, the radii of $B_k$ tends to zero as $k \to \infty$ so that $\lim_{k \to \infty} \mu(A_k)/\mu(B_k)=1$.

Let $\tilde B_k \subset S$ be a lift of $B_k$ and let $\tilde A_k \subset \tilde B_k$ be the preimage of $A_k$. We will argue that for values of $s$ nearby $a_{n_k}$, there is a large intersection between $\tilde F^{s}(\tilde B_k)$ and $G(\tilde B_k)$. See Figure \ref{fig:density}.
Then for sufficiently large $k$, density kicks in and implies that $\tilde F^{s}(\tilde A_k)$ and $G(\tilde A_k)$
intersect in a set of positive measure. Translating this in terms of cocycles, we see that this implies that $G$ is an essential value.

\begin{figure}
\includegraphics[width=5in]{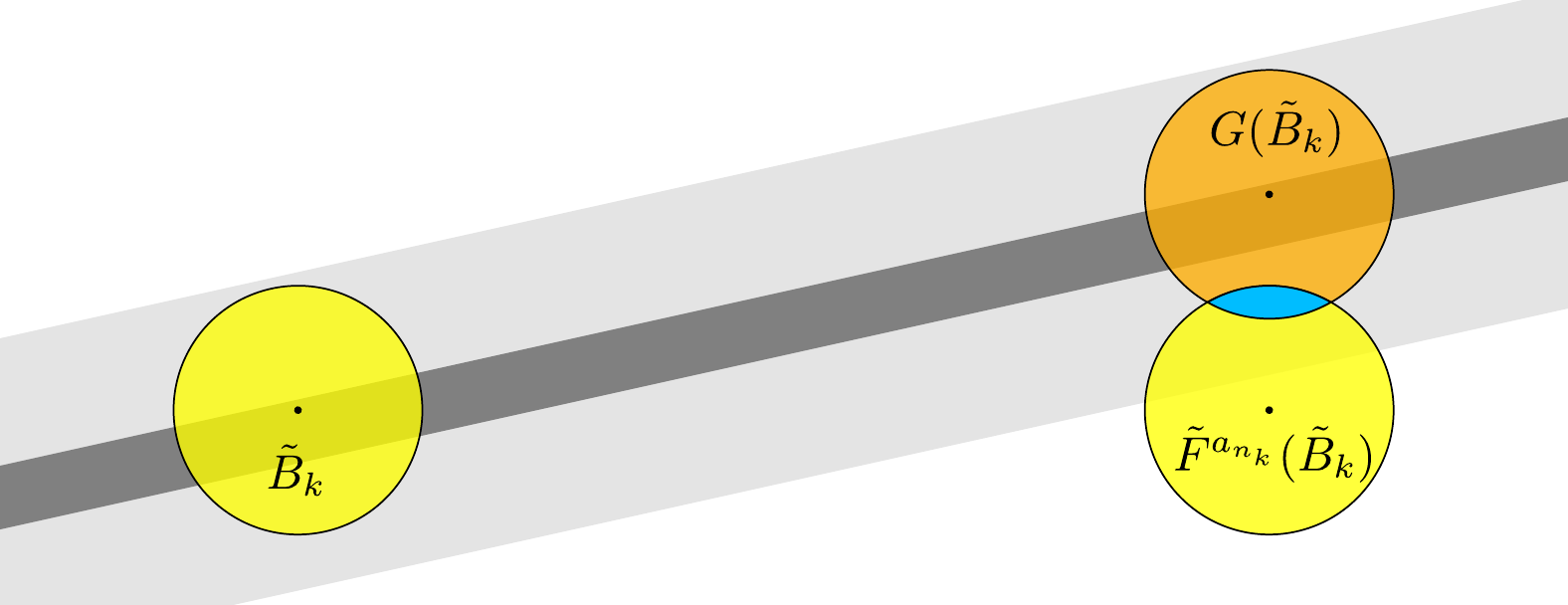}
\caption{In light grey we depict the strip $\tilde C_k$, and in dark grey the lift of $C_{n_k}(r)$. The intersection between $\tilde F^{a_{n_k}}(\tilde B_k)$ and $G(\tilde B_k)$ is shown.}
\label{fig:density}
\end{figure}

We will complete the proof by demonstrating that the intersection $\tilde F^{s}(\tilde B_k) \cap G(\tilde B_k)$ is large when $k$ is large. 
Let $\tilde C_k \subset S$ be the connected component of the preimage of $C_{n_k}$ containing $\tilde B_k$. 
Typically $\tilde C_k$ will be a strip and we assume this for simplicity. (If $\tilde C_k$ is not a strip we could work on the universal cover.)
Let $\dev: \tilde C_k \to \R^2$ be a developing map to the plane which is a translation in local coordinate charts. This is a homeomorphism to a strip in the plane. The action of $G$ translates along the strip $\tilde C_k$ and in fact:
$$
\dev \circ G(\tilde B_k)=(a_{n_k},b_{n_k})+\dev(\tilde B_k)
\quad \text{and} \quad
\dev \circ \tilde F^s(\tilde B_k \cap \tilde C_k) \supset \big((s,0)+\dev(B_k)\big)\cap \tilde C_k.$$
Considering the particular case of $s=a_{n_k}$, we see that the distance between the two circles $(a_{n_k},b_{n_k})+\dev(\tilde B_k)$
and $(s,0)+\dev(B_k)$ differ by a vertical translation
by $|b_{n_k}|$. Thus from \eqref{eq: well-approximated 2} and consideration of the radius of $\tilde B_k$ we see that the region
\begin{equation}
\label{eq:lune}
\dev \circ G(\tilde B_k) \cap \dev \circ \tilde F^{a_{n_k}}(\tilde B_k \cap \tilde C_k)
\end{equation}
has height
$$r w_{n_k}-|b_{n_k}|=(1-\frac{\epsilon}{2})w_{n_k} - |b_{n_k}| \geq \frac{\epsilon}{2} w_{n_k}$$
(where we use our definition of $r$ and \eqref{eq: well-approximated 2}), which is a positive proportion of the radius $r w_{n_k}$ of $B_k$.
In particular, there is an $\eta>0$ so that
$$\frac{\tilde \mu\big(G(\tilde B_k) \cap \tilde F^{a_{n_k}}(\tilde B_k)\big)}{\tilde \mu(\tilde B_k)}>\eta
\quad \text{for all $k$}.$$
Note that this ratio of areas varies continuously as $s$ varies near $a_{n_k}$, so this ratio is nearly as large when $s$ is near $a_{n_k}$.
By the remarks of the previous paragraph, this completes the proof.
\end{proof}

\subsection{A geometric interpretation}

We will now give a geometric interpretation of the concept of well-approximation by strips in the case where the base surface $X^\circ$ has Veech's lattice property. Essentially this amounts to working out a statement of
\cite[Proposition 5]{hubert weiss} which was left to the reader with attention to the explicit bounds.

We begin with some hyperbolic geometry. We continue to follow the conventions established in \S \ref{sect:hyperbolic geometry}.
Recall $\H^2=O(2)\bs \PGL(2,\R)$. For $M \in \PGL(2,\R)$ we use $[M] \in \H^2$ to denote the associated coset.
For each vector $\v \in \R^2$,
we may define the Busemann function
$$B_\v: \H^2 \to \R; \quad [M] \mapsto \log |M\v|.$$
\comref{Page 28: Busemann functions are defined to have image into $R>0$, and then it is said that $Mn \to v$ iff $Bv(Mn)\to -\infty$.}\compat{Previously the equation above had image $\R_{>0}$. The $>0$ was erroneous.}
If $\v$ has unit length this coincides with the usual notion of the Busemann function for the geodesic $t \mapsto [g_t r_{\theta}^{-1}]$ where $r_{\theta}\in \SO(2,\R)$ is a rotation so that $r_\theta^{-1}(\v)$ is horizontal and $g_t$ is as in \eqref{eq:gt}. The Busemann functions yield a natural compactification of $\H^2$ where
$[M_n] \to [\v]$ if and only if $B_\v([M_n]) \to - \infty$. A {\em horodisk neighborhood} of $[\v] \in \RP^2$ is a set of the form
\begin{equation}
\label{eq:horodisk}
H(\v, \epsilon)=\{[M] \in \H^2~:~\exp \circ B_\v([M])<\epsilon\}
\quad \text{for some $\epsilon>0$}.
\end{equation}

Let $\Gamma \subset \SL(2,\R)$ be a non-uniform lattice and $P \in \Gamma$ be a parabolic preserving the eigenvector $\v \in \R^2$. Then the quotient $\H^2/\Gamma$ has a cusp associated to $P$ and $\v$. We define
${\mathcal C}(\v,\epsilon)$ to be the image of the horodisk $H(\v,\epsilon)$ in $\H^2/\Gamma$.
It is useful to observe that for any $h \in \SL(2,\R)$ and any $\epsilon>0$ we have
$M \in H\big(h(\v),\epsilon\big)$ if and only if $M \in H(\v,\epsilon)\cdot h^{-1}$
and thus
\begin{equation}
\label{eq:H orbit}
H(\v,\epsilon) \cdot \Gamma=\bigcup_{\gamma \in \Gamma} H\big(\gamma(\v),\epsilon\big)
\end{equation}
or equivalently ${\mathcal C}(\v,\epsilon)={\mathcal C}\big(\gamma(\v),\epsilon\big)$ for all $\gamma \in \Gamma$.

\begin{lemma}
\label{lem:equivalence}
Let $\Gamma \subset \SL(2,\R)$ be a discrete group
and let $\v \in \R^2$ be an eigenvector of a parabolic $P \in \Gamma$.
Let $\u_\theta \in \R^2$ be the unit vector in direction $\theta$. Then the following statements
are equivalent for any $d>0$:
\begin{enumerate}
\item[(a)] There is a sequence $\gamma_n \in \Gamma$ so that the vectors $\v_n=\gamma_n(\v)$ are pairwise distinct and satisfy
$$\liminf_{n \to \infty} |\v_n|\cdot |\u_\theta \wedge \v_n| < d,$$
where $\u_\theta \wedge \v_n$ is the signed area of the parallelogram with the two vectors as edges.
\item[(b)] The geodesic ray
$[0,+\infty) \to \H^2/\Gamma$ defined by $t \mapsto [g_t r_\theta^{-1}\Gamma]$ has an accumulation point in the open neighborhood of the cusp ${\mathcal C}(\v,\sqrt{2d})$.
\end{enumerate}
\end{lemma}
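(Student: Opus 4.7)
The plan is to translate both statements (a) and (b) into a single quantitative condition on the quantity $|g_t r_\theta^{-1}\v'|^2$ for $\v'\in\Gamma(\v)$, and then use the AM--GM inequality on this quadratic expression in $e^t$.

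The key computation: writing $\v' = x'\u_\theta + y'\u_\theta^\perp$ with $x'=\u_\theta\cdot\v'$ and $y'=\u_\theta\wedge\v'$, and noting that $r_\theta^{-1}$ sends $\u_\theta$ to the horizontal,
\[
|g_t r_\theta^{-1}\v'|^2 = e^{-2t}(x')^2 + e^{2t}(y')^2.
\]
AM--GM gives minimum $2|x'y'|$ over $t\in\R$, attained at $t^*(\v')=\tfrac12\log|x'/y'|$ when $y'\neq 0$. Combined with \eqref{eq:horodisk} and \eqref{eq:H orbit}, this shows that the geodesic $t\mapsto[g_t r_\theta^{-1}]$ enters the horoball $H(\v',\sqrt{2d})$ precisely when $|x'y'|<d$, at times near $t^*(\v')$. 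Two useful inequalities follow from $|\v'|^2=(x')^2+(y')^2$: always $|x'y'|\le|\v'||y'|$, while if additionally $|y'|\ll|x'|$ then $|\v'||y'|=|x'y'|(1+o(1))$.

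For (a)$\Rightarrow$(b), I would pick a subsequence with $|\v_n||y_n|<d-\epsilon$. Then $|x_n y_n|\le|\v_n||y_n|<d-\epsilon$ immediately, so $|g_{t_n^*}r_\theta^{-1}\v_n|^2=2|x_n y_n|<2d$. To confirm $t_n^*\to\infty$, note that $\Gamma(\v)\subset\R^2$ is discrete, so pairwise distinct $\v_n$ force $|\v_n|\to\infty$, hence $|y_n|\le(d-\epsilon)/|\v_n|\to 0$ and $|x_n|\to\infty$, giving $t_n^*=\tfrac12\log(|x_n|/|y_n|)\to\infty$. I would then choose $t_n$ just past $t_n^*$ with $|g_{t_n}r_\theta^{-1}\v_n|^2=d$; each $[g_{t_n}r_\theta^{-1}\Gamma]$ lies on the compact horocycle $\{[M\Gamma]:|M\v|^2=d\}\subset{\mathcal C}(\v,\sqrt{2d})$ (the horocycle modulo the parabolic stabilizer $\langle P\rangle$), and a convergent subsequence yields the desired accumulation point.

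For (b)$\Rightarrow$(a), I would lift the convergence: choose $\gamma_n\in\Gamma$ so that $[g_{t_n}r_\theta^{-1}\gamma_n]$ converges in $\H^2$ to a lift $\tilde q$ of the accumulation point, with $|\tilde q\,\v|^2=\rho^2<2d$ strictly. Continuity of the Busemann function gives $|g_{t_n}r_\theta^{-1}\v_n|^2\to\rho^2$ for $\v_n=\gamma_n(\v)$, and AM--GM then yields $2|x_n y_n|<2d-\delta$ eventually for some $\delta>0$. To obtain pairwise distinct $\v_n$, suppose instead that some $\v_0$ repeats infinitely often; then the corresponding $\gamma_n$ all lie in a coset of the cyclic stabilizer $\langle P\rangle$, and discreteness of $\langle P\rangle$ forces $\gamma_n$ to be eventually constant, making $g_{t_n}r_\theta^{-1}\gamma_\ast$ diverge in $\H^2$ as $t_n\to\infty$ --- contradicting convergence. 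With the $\v_n$ distinct, $e^{2t_n}y_n^2\le|g_{t_n}r_\theta^{-1}\v_n|^2<2d$ and $t_n\to\infty$ force $y_n\to 0$, hence $|x_n|\sim|\v_n|\to\infty$, so $|\v_n||y_n|=|x_n y_n|(1+o(1))<d$ eventually.

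The hard part will be the direction (b)$\Rightarrow$(a), specifically the conversion from $|x_n y_n|$ (the natural output of AM--GM) to $|\v_n||y_n|$ (the quantity appearing in (a)); this requires the asymptotic $|x_n|/|\v_n|\to 1$, which depends on the discreteness of $\Gamma(\v)$. A secondary subtlety is ensuring that accumulation in the open cusp neighborhood ${\mathcal C}(\v,\sqrt{2d})$ is genuinely witnessed by infinitely many distinct $\v_n$ rather than by repeated entries into a single horoball associated to one $\gamma_n(\v)$.
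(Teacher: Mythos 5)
Your route is the same as the paper's: reduce to the horizontal direction, minimize $|g_t r_\theta^{-1}\v'|^2=e^{-2t}(x')^2+e^{2t}(y')^2$ by AM--GM (this is exactly Proposition \ref{prop:horoball}), compare $|x'y'|$ with $|\v'|\,|y'|$ asymptotically using discreteness of the orbit $\Gamma(\v)$ (this is exactly Proposition \ref{prop:asymptotic}), and pass between $\H^2$ and $\H^2/\Gamma$ by lifting and descending horoballs. However, two steps fail as written. First, in (a)$\Rightarrow$(b), the time $t_n$ with $|g_{t_n}r_\theta^{-1}\v_n|^2=d$ need not exist: you only know the minimum along the ray is $2|x_ny_n|<2(d-\epsilon)$, and whenever $|x_ny_n|\in(d/2,\,d-\epsilon)$ (possible for any $\epsilon<d/2$) this minimum exceeds $d$, so the level set $\{t:\,|g_tr_\theta^{-1}\v_n|^2=d\}$ is empty. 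The repair is immediate: fix any level $c$ with $2(d-\epsilon)<c<2d$, say $c=2d-\epsilon$. Since at most two orbit points can be parallel to $\u_\theta$ (two such points $c_1\u_\theta$, $c_2\u_\theta$ with $c_2\neq\pm c_1$ would yield a hyperbolic element of $\Gamma$ sharing a fixed point with a conjugate of $P$, contradicting discreteness), you may discard finitely many terms so that $y_n\neq 0$; then the value increases to $+\infty$ past the minimum, the level $c$ is attained at some $t_n>t_n^*$, and your compact-horocycle argument goes through verbatim, with the accumulation point landing on the level-$\sqrt{c}$ horocycle inside the open set ${\mathcal C}(\v,\sqrt{2d})$. (The paper makes the analogous choice, working with the level $\sqrt{K+d}$, $K<d$.)

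Second, in (b)$\Rightarrow$(a), the assertion that ``discreteness of $\langle P\rangle$ forces $\gamma_n$ to be eventually constant'' is a non sequitur: a coset $\gamma_*\langle P\rangle$ contains infinitely many distinct elements, all of which carry $\v$ to the same vector $\v_0$, so repetition of $\v_0$ does not force the $\gamma_n$ themselves to stabilize. The conclusion you want is nonetheless true, and your own norm computation proves it directly: if $\v_n=\v_0$ along a subsequence, then $|g_{t_n}r_\theta^{-1}\gamma_n\v|=|g_{t_n}r_\theta^{-1}\v_0|$, which tends to $0$ if $\v_0$ is parallel to $\u_\theta$ and to $+\infty$ otherwise; either limit contradicts the convergence $|g_{t_n}r_\theta^{-1}\v_n|\to\rho\in(0,\sqrt{2d})$ that you already extracted from continuity of the Busemann function. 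With these two local repairs your argument is complete; the part you flagged as hard --- converting $|x_ny_n|$ to $|\v_n|\,|y_n|$ via $y_n\to 0$, discreteness of the orbit, and $|x_n|\sim|\v_n|\to\infty$ --- is handled correctly and matches the paper's use of Proposition \ref{prop:asymptotic}.
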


Statement (a) is related to the notion of well-approximation by strips, see \eqref{eq: well-approximated}.

Lemma \ref{lem:equivalence} gives a geometric criterion for deducing well-approximated by strips in the case when the Veech group is non-elementary.
\begin{proposition}
\label{prop:criterion}
Let $S$ be a $\G$-cover of $X^\circ$ where $\G$ is abelian as in \S \ref{sect:ergodicity criterion}.
Let $\Gamma \subset \SL(2,\R)$ denote the subgroup of the Veech group of $S$ consisting of derivatives of affine automorphisms which commute with all elements of the deck group $\G$. Let $C$ be a cylinder on $X$ which lifts to a strip in $S$
so that $\hol~C$ is preserved by a parabolic $P \in \Gamma$. Then, if $\theta$ is a direction so that the geodesic ray $[g_t r^{-1}_\theta \Gamma]$ has an accumulation point in the open neighborhood of the cusp ${\mathcal C}\big(\hol~C,\sqrt{2 \Area(C)}\big)$ in the surface $\H^2/\Gamma$ then $(\theta, G_C)$ is well-approximated by strips on $S$.
\end{proposition}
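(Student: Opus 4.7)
The plan is to produce the infinite family of cylinders required by well-approximation by hitting $C$ with affine automorphisms drawn from $\Gamma$. The definition of $\Gamma$ ensures that these automorphisms lift to $S$ and commute with the deck group $\G$, which is precisely what is needed to preserve the deck element $G_C$ attached to $C$.

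First I would extract a strict improvement of the accumulation hypothesis. Since ${\mathcal C}(\hol\,C,\sqrt{2\Area(C)})$ is open in $\H^2/\Gamma$ and the ray $t\mapsto[g_tr_\theta^{-1}\Gamma]$ accumulates at some point $p$ inside it, one lift of $p$ satisfies $|M_p\,\gamma(\hol\,C)|<\sqrt{2\Area(C)}$ for some $\gamma\in\Gamma$, using the description of horodisk orbits in \eqref{eq:H orbit}. By continuity of $M\mapsto|M\,\gamma(\hol\,C)|$ on a small neighborhood of this lift, I can strengthen the inequality and thereby extract a constant $d'<\Area(C)$ such that the ray still accumulates inside ${\mathcal C}(\hol\,C,\sqrt{2d'})$. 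Setting $\epsilon:=1-d'/\Area(C)>0$ yields the constant that will appear in \eqref{eq: well-approximated}.

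Next I would invoke Lemma \ref{lem:equivalence}, applying the implication (b)$\Rightarrow$(a) with $\v=\hol\,C$ (which is fixed by the parabolic $P\in\Gamma$) and $d=d'$. The conclusion produces a sequence $\gamma_n\in\Gamma$ with vectors $\v_n:=\gamma_n(\v)$ pairwise distinct and satisfying $|\v_n|\cdot|\u_\theta\wedge\v_n|<d'$ (after passing to a subsequence). For each $n$ choose an affine automorphism $f_n:S\to S$ with $D(f_n)=\gamma_n$ commuting with every element of $\G$; the definition of $\Gamma$ makes this choice possible. Because $f_n$ commutes with $\G$ it descends to an affine automorphism $\bar f_n:X^\circ\to X^\circ$, and I set $C_n:=\bar f_n(C)$. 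Then $C_n$ is a maximal cylinder on $X^\circ$ with $\hol(C_n)=\gamma_n\hol(C)=\v_n$ and $\Area(C_n)=\Area(C)$ since $\gamma_n\in\SL(2,\R)$; distinctness of the $\v_n$ forces distinctness of the $C_n$.

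The final and most delicate step is verifying that $G_{C_n}=G_C$ for every $n$; this is the place where the commutation hypothesis on $\Gamma$ is essential and is in my view the main obstacle. Fix an oriented core curve $\gamma$ of $C$ and a lift $\tilde\gamma$ to $S$, so $\tilde\gamma(1)=G_C\cdot\tilde\gamma(0)$. Apply $f_n$: since $f_n$ commutes with $G_C\in\G$,
\[
f_n\circ\tilde\gamma(1)=f_n(G_C\cdot\tilde\gamma(0))=G_C\cdot f_n\circ\tilde\gamma(0),
\]
exhibiting $f_n\circ\tilde\gamma$ as a lift of the core curve $\bar f_n\circ\gamma$ of $C_n$ whose associated deck generator is again $G_C$. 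Hence $G_{C_n}=G_C$. Combining this with the bound from Lemma \ref{lem:equivalence} gives
\[
|\hol(C_n)|\cdot|\u_\theta\wedge\hol(C_n)|=|\v_n|\cdot|\u_\theta\wedge\v_n|<d'=(1-\epsilon)\Area(C_n)
\]
for infinitely many pairwise distinct maximal cylinders $C_n$ with areas uniformly equal to $\Area(C)>0$, which is exactly the data required to conclude that $(\theta,G_C)$ is well-approximated by strips on $S$.
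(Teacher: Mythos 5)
Your proposal is correct and follows essentially the same route as the paper's proof: apply Lemma \ref{lem:equivalence} with $\v=\hol\,C$ in the direction (b)$\Rightarrow$(a), transport $C$ by affine automorphisms with derivatives $\gamma_n$ that commute with $\G$ so that $G_{C_n}=G_C$, and extract $\epsilon>0$ from the strict inequality against $\Area(C)$. Your two additions---the preliminary shrinking to $d'<\Area(C)$ (redundant, since statement (a) already gives $\liminf_n |\v_n|\cdot|\u_\theta\wedge\v_n|<d$, from which the paper takes $K<\Area(C)$ and any $\epsilon<1-K/\Area(C)$) and the explicit core-curve lifting argument for $G_{C_n}=G_C$ (which the paper asserts in one sentence from the commutativity assumption)---are valid elaborations of the same argument rather than a different approach.
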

\begin{proof}[Proof assuming Lemma \ref{lem:equivalence}]
The commutativity assumption tells us that for any $\gamma \in \Gamma$, there is an affine automorphism $\phi$ with derivative $\gamma$ so that the cylinder $\phi(C)$ satisfies $G_{\phi(C)}=G_C$. Such cylinders are therefore available to use for verifying that $(\theta, G_C)$ is well-approximated by strips. By hypothesis we have statement (b) of Lemma \ref{lem:equivalence} for $\v=\hol~C$ and
$d=\Area(C)$. Since (b) implies (a), there is a sequence $\gamma_n \in \Gamma$ so that
$\v_n=\gamma_n(\v)$ are pairwise distinct and so that
$$\liminf_{n \to \infty} |\v_n|\cdot |\u_\theta \wedge \v_n|<d.$$
Let $K$ denote the $\liminf$ above.
For each $n$, choose an affine automorphism $\phi_n$ with derivative $\gamma_n$ so that $\phi_n$
commutes with all elements of $\G$. Set $C_n=\phi_n(C)$ which are cylinders with the same area and $G_{C_n}=G_C$ from our first remarks. Observe that $|\hol~C_n| \cdot \big|\u_\theta \wedge \hol(C_n)\big|$
coincides with $|\v_n|\cdot |\u_\theta \wedge \v_n|$ and so the limit inferior of this quantity is $K$.
The definition of well-approximated is therefore satisfied when $0<\epsilon<1-\frac{K}{\Area(C)}$. \end{proof}


The remainder of the section is devoted to the proof of Lemma \ref{lem:equivalence}. In order to simplify the argument, we assume without loss of generality that the direction $\theta$ is horizontal and $\u_\theta=(1,0)$. Denote $\v_n$ by $(x_n,y_n)$. Then we have
\begin{equation}
\label{eq:trivial identity}
|\v_n|\cdot |\u_{\theta} \wedge \v_n|=|y_n|\sqrt{x_n^2 + y_n^2}.
\end{equation}
It will be useful for us to observe:

\begin{proposition}
\label{prop:asymptotic}
For any sequence $\v_n=(x_n,y_n)$ so that $[\v_n] \to [(1,0)]$ in $\RP^2=\R^2/(\R \smallsetminus \{0\})$,
$$\liminf_{n \to \infty} |x_n y_n| = \liminf_{n \to \infty} |y_n|\sqrt{x_n^2+y_n^2}.
$$
\end{proposition}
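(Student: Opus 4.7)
The plan is to exploit the convergence $[\v_n] \to [(1,0)]$ to replace the factor $\sqrt{x_n^2+y_n^2}$ by $|x_n|$ up to an asymptotically negligible error, which forces the two expressions $|x_n y_n|$ and $|y_n|\sqrt{x_n^2+y_n^2}$ to be asymptotic and hence to share the same $\liminf$. First I would observe that since $(1,0)$ represents a well-defined class in $\RP^2=\R^2/(\R\smallsetminus\{0\})$, we have $\v_n \neq \0$ for all large $n$, and the convergence forces $x_n \neq 0$ for all large $n$ (otherwise a subsequence of $\v_n$ would be purely vertical, contradicting horizontality of the limit class). Under these conventions the convergence is equivalent to $y_n/x_n \to 0$.

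Next I would perform the algebraic manipulation
$$|y_n|\sqrt{x_n^2+y_n^2} \;=\; |x_n y_n|\sqrt{1+(y_n/x_n)^2},$$
valid for $n$ large. Since $y_n/x_n \to 0$, the factor $\sqrt{1+(y_n/x_n)^2}$ tends to $1$. Thus for every $\epsilon>0$ there exists $N$ such that
$$(1-\epsilon)\,|x_n y_n| \;\leq\; |y_n|\sqrt{x_n^2+y_n^2} \;\leq\; (1+\epsilon)\,|x_n y_n| \qquad \text{for all } n \geq N.$$
Note the inequalities hold trivially when $y_n=0$, since both sides vanish.

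Taking $\liminf$ across these inequalities yields
$$(1-\epsilon)\liminf_{n\to\infty}|x_n y_n| \;\leq\; \liminf_{n\to\infty}|y_n|\sqrt{x_n^2+y_n^2} \;\leq\; (1+\epsilon)\liminf_{n\to\infty}|x_n y_n|,$$
and letting $\epsilon\to 0$ gives the desired equality. No serious obstacle is anticipated; the only mildly subtle points are ensuring that $x_n \neq 0$ eventually so that the ratio $y_n/x_n$ makes sense, and handling the degenerate case $y_n=0$, both of which are addressed above.
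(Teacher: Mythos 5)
Your proof is correct and takes essentially the same route as the paper's: the published text leaves the proof to the reader, but the draft proof in the source likewise derives $y_n/x_n \to 0$ from the projective convergence (after noting $x_n \neq 0$ eventually) and concludes because the ratio of the two quantities tends to $1$. If anything, your two-sided multiplicative sandwich $|y_n|\sqrt{x_n^2+y_n^2} = |x_ny_n|\sqrt{1+(y_n/x_n)^2}$ is a slightly cleaner packaging than the draft's one-sided subsequence argument via the intermediate bound $|y|\big(|x|+|y|\big)$, since it handles the cases $\liminf = 0$ and $\liminf = +\infty$ uniformly.
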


The proof is elementary and left to the reader.
\compat{A proof (that could use improvement) is commented out here.}

\begin{proposition}
\label{prop:horoball}
Let $\w=(x,y)$ be a a vector in $\R^2$ with $|x| > |y| > 0$. The geodesic ray $\{[g_t]:t>0\}$
is tangent to the horoball $H(\w,\sqrt{2|x||y|})$.
\end{proposition}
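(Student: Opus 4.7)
The proof is essentially a direct calculation, so my plan is to reduce tangency to the statement that $|g_t \w|^2$ has a unique critical point along the ray $t > 0$, and that this point lies on the boundary horocycle of $H(\w, \sqrt{2|x||y|})$.

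First I would unpack the horoball definition: by \eqref{eq:horodisk}, $[M] \in H(\w,\epsilon)$ iff $|M\w| < \epsilon$, and $[M]$ lies on the bounding horocycle iff $|M\w| = \epsilon$. So I need to analyze the function
\[
t \mapsto |g_t \w|^2 = e^{-2t}x^2 + e^{2t}y^2.
\]

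Next, I would apply the AM--GM inequality to the two nonnegative quantities $e^{-2t}x^2$ and $e^{2t}y^2$, giving
\[
e^{-2t}x^2 + e^{2t}y^2 \;\geq\; 2\sqrt{e^{-2t}x^2 \cdot e^{2t}y^2} \;=\; 2|x||y|,
\]
with equality if and only if $e^{-2t}x^2 = e^{2t}y^2$, i.e.\ $e^{4t} = x^2/y^2$. Solving gives the unique value $t^* = \tfrac{1}{2}\log(|x|/|y|)$, which is strictly positive precisely because $|x| > |y| > 0$. So $[g_{t^*}]$ lies on the bounding horocycle of $H(\w, \sqrt{2|x||y|})$, while for every other $t > 0$ we have $|g_t \w|^2 > 2|x||y|$, so $[g_t]$ lies strictly outside the horoball.

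Finally, to conclude genuine \emph{tangency} (rather than a mere single-point intersection), I would note that $t \mapsto |g_t\w|^2$ is smooth with a strict global minimum at $t^*$, so the geodesic ray neither enters the open horoball nor crosses its boundary transversely: it touches the horocycle at exactly the one point $[g_{t^*}]$ and leaves on the same side. This is the standard meaning of a geodesic being tangent to a horoball in $\H^2$. There isn't really an obstacle here — the whole argument amounts to recognizing that the AM--GM inequality is sharp with a unique equality point, which is exactly what tangency requires.
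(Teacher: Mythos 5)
Your proof is correct and follows essentially the same route as the paper's: identify $[g_t]$'s position relative to horoballs via the norm $|g_t\w|$, minimize $e^{-2t}x^2+e^{2t}y^2$ by AM--GM to get the value $\sqrt{2|x||y|}$ at the unique $t^*=\tfrac12\log(|x|/|y|)>0$. In fact your write-up is slightly more careful than the paper's (which contains a small slip, writing the quantity as $e^{-t}x+e^t y$ rather than $\sqrt{e^{-2t}x^2+e^{2t}y^2}$, and asserting its minimum is $\sqrt{2|x||y|}$), and your closing remark that the strict global minimum rules out transverse crossing properly justifies the word \emph{tangent}.
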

\begin{proof}
By a direct calculation we observe that $[g_t]$ is in the boundary of the horoball $H(\w,e^{-t}x+e^t y)$.
The minimum attained by $e^{-t}x+e^t y$ taken over all $t \in \R$ is $\sqrt{2|x||y|}$
and this is attained for the $t$ satisfying $|e^{-t} x|=|e^t y|$ which occurs for $t>0$ since $|x| > |y| > 0$.
\end{proof}

\begin{proof}[Proof of Lemma \ref{lem:equivalence}]
Fix the parabolic $P \in \Gamma$ with eigenvector $\v$ as in the statement of the theorem. Assume without loss of generality that $\theta$ is horizontal as mentioned above. Fix a $d>0$.

Suppose we have sequences $\gamma_n$ so that $\v_n=\gamma_n(\v)=(x_n,y_n)$ satisfies
statement (a) of the theorem. Recall that because $\Gamma$ is a discrete group, the orbit of the eigenvector $\v$ of the parabolic $P \in \Gamma$ is discrete. This forces us to have $|\v_n| \to \infty$,
and it follows from \eqref{eq:trivial identity} and our hypothesis on $\{v_n\}$ that
 $[\v_n] \to [(1,0)]$ and $|x_n| \to \infty$ where $\v_n=(x_n,y_n)$. By possibly removing finitely many terms, we may assume $|x_n|>|y_n|$ for all $n$. Let $K = \liminf_{n \to \infty} |\v_n|\cdot |\v_n \wedge \u_\theta|$ and recall that $K<d$ by hypothesis.
By Proposition \ref{prop:asymptotic} and \eqref{eq:trivial identity}, we know
$\liminf_{n \to \infty} |x_n y_n| = K$.
As a consequence of Proposition \ref{prop:horoball}, we know that the ray $\{[g_t]:~t>0\}$ is tangent to the horoball $H(\v_n,\sqrt{2|x_n||y_n|})$ for each $n$. On the surface this horoball descends to the cusp neighborhood ${\mathcal C}(\v, \sqrt{2|x_n||y_n|}))$.
As $d>K$, there are infinitely many $n$ so that the ray visits the closure of the horoball
$H(\v_n,\sqrt{K+d})$ so we get an accumulation point in the closure
of ${\mathcal C}(\v, \sqrt{K+d})$ which is contained in ${\mathcal C}(\v, \sqrt{2d})$.

Now suppose that the ray $\{[g_t]:~t>0\}$ has an accumulation point in the open cusp neighborhood ${\mathcal C}(\v, \sqrt{2d})$. This accumulation point lies in the boundary of ${\mathcal C}(\v, \sqrt{2K})$ for some $K<d$. That is, there is a sequence of times $t_n \to +\infty $ so that
$g_{t_n} \cdot \Gamma$ converges to a point in the closure of ${\mathcal C}(\v, \sqrt{2K})$.
Lifting to $\SL(2,\R)$, we get a sequence $\gamma_n \in \Gamma$ so that $g_{t_n} \gamma_n$ converges to a point in the $H(\v,\sqrt{2K})$. In particular, the ray
$\{g_{t} \gamma_n~:~t>0\}$ passes through
the open horoball $H(\v,\sqrt{K+d})$. From the discussion above \eqref{eq:H orbit},
this is equivalent to the statement that the ray $\{g_t:~t>0\}$ passes through $H\big(\gamma_n(\v),\sqrt{K+d}\big)$. Set $\v_n=\gamma_n(\v)=(x_n,y_n)$. This ray is tangent to the horoball $H\big(\v_n, \sqrt{2 |x_n| |y_n|})$ by Proposition \ref{prop:horoball}.
In particular then
$\sqrt{2|x_n| |y_n|}<\sqrt{K+d}$ for each $n$. It follows that $\liminf |x_n||y_n| \leq \frac{K+d}{2}$.
Then by Proposition \ref{prop:asymptotic},
$$\liminf |y_n|\big(|x_n|+|y_n|\big) \leq \frac{K+d}{2}<d.$$
\end{proof}

\section{Ergodicity of aperiodic directions}
\label{sect:ergodicity}

\comref{The final part of Section 6 heavily uses notation, terminology and results from the subsequent Appendix, this basically forces the reader to skip it and then come back to it. I think that you should either invert the two sections or anticipate the needed material to Section 6.}

Our Theorem \ref{thm:main ergodic} on the ergodicity of the tiling billiard flows is a consequence of an ergodicity result for the translation surface $S$. We show the following using methods introduced in the previous section.

\begin{theorem}
\label{thm:ergodicity on S}
Let $0 \leq \theta \leq \pi$ and let $\vec{u}_\theta \in T_1(\Delta)$ be the unit tangent vector based at $i$ which is tangent to the geodesic ray in the upper half-plane terminating at $|\!\cot \theta|$ as described above \eqref{eq:E0}. Consider the billiard trajectory $g_t(\vec{u}_\theta)$ in $\Delta$ as in the introduction. If
$$\limsup_{t \to +\infty} \Im\big(g_t(\vec{u}_\theta)\big) > \frac{1}{\sqrt{3}}
\quad \text{and} \quad
\liminf_{t \to +\infty} \Im\big(g_t(\vec{u}_\theta)\big) \neq +\infty,$$
then the straight-line flow $F_\theta:S \to S$ is ergodic.
\end{theorem}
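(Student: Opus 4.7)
The plan is to apply the Hubert–Weiss ergodicity criterion (Theorem \ref{thm:ergodicity criterion}) to the abelian cover $S \to Z^\circ$ with deck group $\G = 2\Lambda$. I would take as generators $G_i = -2\v_i$ for $i \in \{0,1,2\}$; since $\v_0+\v_1+\v_2 = \0$, these three already generate all of $2\Lambda$, so the intermediate cover $\langle G_0,G_1,G_2\rangle\backslash S$ coincides with $Z^\circ$ itself.

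Condition (b) of the criterion — ergodicity of $F_\theta$ on $Z^\circ$ — falls out immediately: if $\theta$ were parallel to a lattice vector then $\theta \in \mathcal{P}\cup\mathcal{D}$, forcing $g_t(\vec u_\theta)$ by the remark after Theorem \ref{thm:lattice} to exit a cusp of $\Delta$ and hence yield either $\liminf\Im = +\infty$ (when $\theta\in\mathcal{D}$) or $\limsup\Im = 0$ (when $\theta\in\mathcal{P}$), both of which the hypothesis forbids; so $\theta$ is an irrational direction on the torus $Z^\circ$, on which $F_\theta$ is uniquely ergodic.

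For condition (a), I would convert the hypothesis into well-approximation by strips using the hyperbolic-geometric package of \S \ref{sect:hubert weiss}. A direct computation in the conventions of \S \ref{sect:hyperbolic geometry} gives $\exp B_{\e_0}([M]) = \Im([M])^{-1/2}$, so the horoball $H(\e_0,\epsilon)$ is precisely $\{\Im > 1/\epsilon^2\}$; the horocycle $\Im = \tfrac{1}{\sqrt 3}$ bounding the cusp at $\infty$ in $\Delta$ (Corollary \ref{cor:Delta}) corresponds to $\epsilon = 3^{1/4}$, matching $\tfrac{1}{2}\epsilon^2 = \tfrac{\sqrt 3}{2}$. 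Since $\limsup\Im > \tfrac{1}{\sqrt 3}$ is strict, I can fix $c > \tfrac{1}{\sqrt 3}$ so that the geodesic returns infinitely often to $\{\Im > c\}$; then Lemma \ref{lem:equivalence}, applied to the orientation-preserving part of $V(S) = V(Z^\circ)$ (using Lemma \ref{lem:lifting}) with parabolic eigenvector $\v = \e_0$, produces a fixed $\delta > 0$ and infinitely many pairwise distinct $\v_n = \gamma_n(\e_0)$ satisfying
\[
|\v_n|\cdot|\u_\theta \wedge \v_n| \leq (1-\delta)\tfrac{\sqrt 3}{2}.
\]

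The decisive geometric step is to spread this single approximating sequence over all three targets $G_i$ at once. Each $\v_n$ lies in $\Lambda_\vis$ with direction in $\mathcal{D}$, so Proposition \ref{prop:monodromy of strips} decomposes $Z^\circ$ in direction $\v_n/|\v_n|$ into exactly three cylinders $C_{n,0}, C_{n,1}, C_{n,2}$ of equal area $\tfrac{\sqrt 3}{2}$ (one-third of $\Area(Z^\circ) = \tfrac{3\sqrt 3}{2}$), all with holonomy $\v_n$, whose deck group generators run exactly over $\{\pm 2\v_0,\pm 2\v_1,\pm 2\v_2\}$. After relabeling so that $\w(C_{n,i}) = G_i$, each fixed $i$ yields infinitely many maximal cylinders with the correct deck generator, uniform area, and $|\hol|\cdot|\u_\theta\wedge\hol| \leq (1-\delta)\Area$; this certifies that $(\theta, G_i)$ is well-approximated by strips for every $i$, and Theorem \ref{thm:ergodicity criterion} concludes. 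The obstacle to watch for is the precise numerical balance: matching the hypothesis constant $\tfrac{1}{\sqrt 3}$ to the cylinder area $\tfrac{\sqrt 3}{2}$ through the $\sqrt{2d}$ scaling in Lemma \ref{lem:equivalence} is exactly what makes the criterion apply — and it is exactly why the paper worked to remove a stray factor of $\tfrac{1}{2}$ from the original Hubert–Weiss formulation.
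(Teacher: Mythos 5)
Your argument is correct, and it reaches the conclusion through the same engine as the paper --- Theorem \ref{thm:ergodicity criterion} with identical numerology ($d=\Area(C_i)=\frac{\sqrt{3}}{2}$, $\sqrt{2d}=\sqrt[4]{3}$, and the horoball $\{\Im z>\frac{1}{\sqrt{3}}\}$ of Proposition \ref{prop:cusp neighborhood calculation}) --- but you verify the two hypotheses of the criterion by genuinely different routes. For ergodicity on the base, the paper's Proposition \ref{prop:ergodicity on Z} invokes Masur's criterion, using $\liminf \Im \neq +\infty$ as nondivergence; you instead rule out lattice directions via the cusp dichotomy and quote unique ergodicity of an irrational linear flow on the torus $Z$, which is more elementary. (One caution: the Remark you cite is stated only for $\theta\in[\frac{\pi}{3},\frac{2\pi}{3}]$ and is not proved in the paper; the robust statement is that $\theta$ is parallel to a vector of $\Lambda$ if and only if the endpoint $|\cot\theta|$ of the geodesic is a parabolic fixed point of $V(Z^\circ)$, in which case the geodesic diverges into a cusp and $\Im\big(g_t(\vec{u}_\theta)\big)$ tends to $0$ or $+\infty$, contradicting the hypotheses.) For well-approximation, the paper routes through the subgroup $\Gamma\subset V(S)$ of derivatives of automorphisms commuting with the deck group (Proposition \ref{prop:commuting with the deck group}, Corollary \ref{cor:index six}) and the transport argument of Proposition \ref{prop:criterion} on the finite cover $\H^2/\Gamma\to\Delta$; you bypass all of this by applying Lemma \ref{lem:equivalence} --- pure hyperbolic geometry, valid for any discrete group --- to the full orientation-preserving Veech group, and then letting Proposition \ref{prop:monodromy of strips} do the deck-generator bookkeeping: every strip direction $\v_n=\gamma_n(\e_0)$ carries three equal-area maximal cylinders realizing all of $\pm 2\e_0, \pm 2\e_1, \pm 2\e_2$ simultaneously. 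That is exactly the datum the commutation hypothesis of Proposition \ref{prop:criterion} was engineered to preserve, so your shortcut is sound and arguably cleaner. (A cosmetic point: Proposition \ref{prop:monodromy of strips} pins down $\w(C_{n,i})$ only up to sign, so for fixed $i$ pass to a subsequence on which $G_{C_{n,i}}$ is constantly $2\e_i$ or constantly $-2\e_i$; either sign generates the same subgroup, so the criterion applies.)

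One step to tighten: you pass from ``the geodesic returns infinitely often to $\{\Im>c\}$'' to Lemma \ref{lem:equivalence}, but statement (b) of that lemma requires an \emph{accumulation point} in the open cusp neighborhood, and infinitely many visits alone do not supply one when $\limsup\Im=+\infty$, since the visits could march up the cusp. This is where $\liminf\Im\neq+\infty$ enters your step (a) as well, not just the exclusion of $\mathcal{D}$: if $\limsup\Im<+\infty$, the visits above $c$ eventually lie in a compact slab $\{c\leq\Im\leq\limsup+1\}$ of $\overline{\Delta}$, which yields an accumulation point directly; if $\limsup\Im=+\infty$, choose $M_0<\infty$ with $\Im\leq M_0$ along a sequence of times tending to infinity and any $c''>\max\big(M_0,\frac{1}{\sqrt{3}}\big)$, and note that by continuity the ray crosses the compact level set $\{\Im=c''\}$ infinitely often, producing an accumulation point with $\Im=c''>\frac{1}{\sqrt{3}}$. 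The paper glosses the same point with ``guaranteed by hypothesis,'' so this is a shared ellipsis, but since you stated the weaker property explicitly it deserves this one-line repair.
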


The criterion we use to prove this is given in Theorem \ref{thm:ergodicity criterion}.
Our surface $S$ is a $\Z^2$ cover of $Z^\circ$.
We define $\Gamma$ to be the subset of the Veech group $V(S)$ consisting of elements $M \in \Gamma$ so that there is an affine automorphism $f:S \to S$ which commutes with all elements in $2 \Lambda$, the deck group of the cover $S \to Z^\circ$. To this end we show:

\begin{proposition}
\label{prop:commuting with the deck group}
An affine automorphism $f:Z^\circ \to Z^\circ$ has a lift to $S$ which commutes with all elements of the deck group of the cover $S \to Z^\circ$ if
$f$ preserves the set $\Sigma \subset Z$ of punctures
pointwise.
\end{proposition}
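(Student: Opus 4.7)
The plan is to translate the commutativity requirement into an algebraic condition on homology, and then verify that condition using Proposition~\ref{prop:monodromy} together with the hypothesis that $f$ fixes $\Sigma$ pointwise. First, by Lemma~\ref{lem:lifting}, $f$ lifts to some affine automorphism $\tilde{f}: S \to S$. Since the deck group $2\Lambda$ is abelian, any two lifts differ by an element of $2\Lambda$, and this element commutes with every $\tau_\w$; hence the conjugation map $\tau_\w \mapsto \tilde{f}\,\tau_\w\,\tilde{f}^{-1}$ on $2\Lambda$ is independent of the choice of lift. Tracing the lift of a loop under $f$, one checks that if $\w = h(\gamma)$ then $\tilde{f}\,\tau_\w\,\tilde{f}^{-1} = \tau_{h(f_*\gamma)}$. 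Consequently, $\tilde{f}$ commutes with every deck transformation if and only if $h \circ f_* = h$ as homomorphisms $H_1(Z^\circ;\Z) \to 2\Lambda$.

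Next, by the explicit formula of Proposition~\ref{prop:monodromy}, this condition is equivalent to demanding $\eta_i \cap f_*\gamma = \eta_i \cap \gamma$ for all $\gamma \in H_1(Z^\circ;\Z)$ and $i \in \{0,1\}$. Using the naturality of the intersection pairing under homeomorphisms of the pair $(Z,\Sigma)$ together with the nondegeneracy of the pairing $H_1(Z,\Sigma;\Z) \times H_1(Z^\circ;\Z) \to \Z$, this reduces (up to a sign depending on the orientation of $f$) to the assertion $f_*^{-1}(\eta_i) = \eta_i$ in $H_1(Z,\Sigma;\Z)$ for $i = 0, 1$.

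To establish the displayed identity I would reuse the structural observation from the proof of Proposition~\ref{prop:six special classes}: a class in $H_1(Z,\Sigma;\Z)$ with trivial holonomy is uniquely determined by its image under the boundary map $\delta: H_1(Z,\Sigma;\Z) \to H_0(\Sigma;\Z)$, and the six classes $\{\pm\eta_0, \pm\eta_1, \pm(\eta_0-\eta_1)\}$ have six distinct $\delta$-images of the form \eqref{eq:points}. Since $f$ fixes $\Sigma$ pointwise, the induced map $f_*$ on $H_0(\Sigma;\Z)$ is the identity, and by naturality of $\delta$ we obtain $\delta(f_*^{-1}\eta_i) = f_*^{-1}\delta(\eta_i) = \delta(\eta_i)$. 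Since $\eta_i$ has trivial holonomy and any affine map sends the zero vector to itself, $f_*^{-1}\eta_i$ also has trivial holonomy; combining with the $\delta$-computation gives $f_*^{-1}\eta_i = \eta_i$, which is what we needed.

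The main delicate point is the tracking of orientation signs when converting $\eta_i \cap f_*\gamma$ to a pairing with $f_*^{-1}\eta_i$ in the second paragraph. For orientation-preserving $f$ the argument is immediate, while for orientation-reversing $f$ one must check that the pointwise fixing of $\Sigma$ is strong enough to force the requisite sign consistency between the $\delta$-computation and the intersection-pairing identity, completing the proof.
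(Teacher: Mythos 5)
Your first three paragraphs are essentially the paper's own proof, organized a bit more systematically: the paper likewise lifts $f$ via Lemma \ref{lem:lifting}, identifies the conjugation action of $\tilde f$ on the deck group with $h \circ f_*$ (checking it only on the generators $D_i = h(\beta_i) = -2\v_i$ rather than on all of $H_1(Z^\circ;\Z)$), converts the condition via Proposition \ref{prop:monodromy} into $f_*^{-1}(\eta_i)=\eta_i$ for $i=0,1$, and deduces that from the pointwise fixing of $\Sigma$ by exactly the determination principle you cite from the proof of Proposition \ref{prop:six special classes} (trivial holonomy plus the $\delta$-image pin down the class). For orientation-preserving $f$ your argument is complete and correct.

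The sign issue you flag in your final paragraph is, however, a genuine gap --- and it cannot be closed, because the hoped-for sign consistency fails. For orientation-reversing $f$ the pairing transforms as $\eta_i \cap f_*\gamma = -\bigl(f_*^{-1}(\eta_i) \cap \gamma\bigr)$, while the determination argument still yields $f_*^{-1}(\eta_i)=\eta_i$ on the nose (preservation of trivial holonomy and of the $\delta$-image does not see orientation), so one gets $h \circ f_* = -h$ and the lift conjugates each deck transformation to its inverse rather than commuting with it. Moreover, orientation-reversing automorphisms fixing $\Sigma$ pointwise do exist: $\rho$ fixes one puncture and swaps the other two, the automorphism with derivative $-I$ induces the same transposition of $\Sigma$, and their composition (derivative $-R$ in the basis $\{\e_1,\e_2\}$ of \eqref{eq:some matrices}) is orientation-reversing and fixes all three punctures; computing the monodromy of explicit loops through the rhombi using \eqref{eq:gluings} confirms that its lifts act on the deck group by $\w \mapsto -\w$. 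So the proposition requires the hypothesis that $f$ be orientation-preserving, and the paper's own proof contains the identical unguarded step (the equality $\eta_j \cap f_*(\beta_i) = f_*^{-1}(\eta_j)\cap \beta_i$ is valid only when $f$ preserves orientation). Fortunately nothing downstream is damaged: Corollary \ref{cor:index six} only needs $\Gamma$ to have finite index in $V(S)$, and restricting to orientation-preserving puncture-fixing automorphisms still bounds the index by twelve, which suffices for the proof of Theorem \ref{thm:ergodicity on S}.
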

\begin{proof}
It follows from Proposition \ref{prop:six special classes} that the classes $\eta_0, \eta_1 \in H_1(S,\Sigma;\Z)$ are preserved by $f$. (The six classes are distinguished by their images in $H_0(\Sigma,\Z)$ under the boundary map.)

Let $p_0 \in Z^\circ$ be the basepoint for the curves $\alpha$, $\beta_0$, $\beta_1$ and $\beta_2$ on the left side of Figure \ref{fig:torus fundamental group}.
When the curves $\beta_0$ and $\beta_1$ are lifted to curves $\tilde \beta_0$ and $\tilde \beta_1$ on $S$, the endpoints differ by the deck group elements $-2 \v_0$ and $-2 \v_1$, respectively. Denote these two deck group elements by $D_0$ and $D_1$ respectively, and note that they generate the deck group $2 \Lambda$.

Let $\tilde f:S \to S$ be a lift of $f$ which exists
by Lemma \ref{lem:lifting}.
Since $D_i$ is the element of the deck group carrying the starting point of $\tilde \beta_i$ to the end point, it follows that $\tilde f \circ D_i \circ \tilde f^{-1}$ is the element of the deck group carrying the starting point of $\tilde f \circ \tilde \beta_i$ to its end point. The curve $\tilde f \circ \tilde \beta_i$ is a lift of the curve $f \circ \beta_i$ and therefore the deck group element $\tilde f \circ D_i \circ \tilde f^{-1}$ coincides with the monodromy $h(f \circ \beta_i)$; see \eqref{eq:h}. We can compute $h(f \circ \beta_i)$ using Proposition \ref{prop:monodromy}:
$$h(f \circ \beta_i)=2\big(\eta_0 \cap f_\ast(\beta_i) \big)\v_0+2\big(\eta_1 \cap f_\ast(\beta_i)\big)\v_1
=
2\big(f_\ast^{-1}(\eta_0) \cap \beta_i \big)\v_0+2\big(f_\ast^{-1}(\eta_1) \cap \beta_i\big)\v_1.$$
Since $f_\ast^{-1}$ stabilizes $\eta_0$ and $\eta_1$,
we see $h(f \circ \beta_i)=h(\beta_i)$ or equivalently
$\tilde f \circ D_i \circ \tilde f^{-1}=D_i$.
\end{proof}

\begin{corollary}
\label{cor:index six}
The subgroup $\Gamma \subset V(S)$ is at most index six.
\end{corollary}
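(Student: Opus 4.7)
The plan is to exhibit a subgroup of $V(S)$ of index at most six whose elements all lift to $S$ as affine automorphisms commuting with the deck group $2\Lambda$. The natural source of such a subgroup is the action of $\Aff(Z^\circ)$ on the three-element puncture set $\Sigma = Z \smallsetminus Z^\circ$. First I would observe that any affine automorphism of $Z^\circ$ extends continuously across the punctures (the underlying flat structure on $Z$ is nonsingular at these points) and therefore permutes $\Sigma$, giving a homomorphism
$$\sigma : \Aff(Z^\circ) \to \mathrm{Sym}(\Sigma) \cong S_3$$
whose kernel $K$ has index at most $6$ in $\Aff(Z^\circ)$.

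Next I would apply Proposition \ref{prop:commuting with the deck group}: every $f \in K$ preserves $\Sigma$ pointwise and so admits a lift to $S$ that commutes with all of $2\Lambda$. Consequently $D(K) \subseteq \Gamma$, where $D : \Aff(Z^\circ) \to V(Z^\circ)$ is the derivative homomorphism. Using Lemma \ref{lem:lifting} to identify $V(S)$ with $V(Z^\circ)$ and the surjectivity of $D$ onto $V(Z^\circ)$ (which holds by definition of the Veech group), the induced surjection $\Aff(Z^\circ)/K \twoheadrightarrow V(S)/D(K)$ yields
$$[V(S):\Gamma] \;\leq\; [V(S):D(K)] \;\leq\; [\Aff(Z^\circ):K] \;\leq\; |S_3| \;=\; 6,$$
as desired.

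No real obstacle arises: Proposition \ref{prop:commuting with the deck group} and this index chain together do essentially all of the work. The only details meriting an explicit check are that affine automorphisms of $Z^\circ$ really do extend to $Z$ so that the action on $\Sigma$ is well defined, and that the quotient-index inequality genuinely propagates through the surjection $\Aff(Z^\circ)/K \twoheadrightarrow V(S)/D(K)$; both are routine.
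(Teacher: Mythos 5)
Your proposal is correct and is essentially the paper's own argument, which the authors compress into a single sentence: the automorphisms satisfying Proposition \ref{prop:commuting with the deck group} lie in the kernel of the homomorphism from $\Aff(Z^\circ)$ to the permutation group of $\Sigma$, exactly the map $\sigma$ you construct. The details you add (extension of affine automorphisms across the punctures, and the propagation of the index bound through the surjective derivative map $D$) are precisely the routine verifications the paper leaves implicit.
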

\begin{proof}
The affine automorphisms satisfying Proposition \ref{prop:commuting with the deck group} lie in the kernel of a group homomorphism to the permutation group of $\Sigma$.
\end{proof}

\begin{proposition}
\label{prop:ergodicity on Z}
With $\theta$ satisfying the hypotheses of Theorem \ref{thm:ergodicity on S}, the straight-line flow on $Z^\circ$ in direction $\theta$ is uniquely ergodic.
\end{proposition}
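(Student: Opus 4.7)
The plan is to invoke Veech's dichotomy. By Proposition \ref{prop:generators}, $Z^\circ$ has a lattice Veech group (isomorphic to a $(3,\infty,\infty)$-triangle reflection group), hence is a Veech surface. Veech's theorem then guarantees that the straight-line flow $F_\theta$ on $Z^\circ$ is either completely periodic or uniquely ergodic for every direction $\theta$. It therefore suffices to show that the hypotheses preclude complete periodicity.

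First, I would identify the completely periodic directions. Since $Z^\circ$ is a finite translation cover of the torus $Y^\circ$ obtained from a single rhombus, the completely periodic directions on $Z^\circ$ are exactly those parallel to vectors of $\Lambda$. By the standard correspondence between parabolic fixed points of a Fuchsian group and cusps of its quotient, these directions are precisely those $\theta$ for which the geodesic ray $[g_t r_\theta^{-1}]$ converges to a point in the $PV(Z^\circ)$-orbit of an ideal vertex of $\Delta$. There are two such orbits, corresponding to the two cusps of $\Delta$ at $\infty$ and at $\sqrt{3}/3$; see the remark following Theorem \ref{thm:lattice}.

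Next, I would interpret each hypothesis as an obstruction to escape into one of the two cusps. The projected trajectory $g_t(\vec{u}_\theta)$ in $\Delta$ escapes to the cusp at $\infty$ if and only if it is eventually trapped above every horizontal line $\{\Im = c\}$, which is equivalent to $\liminf \Im\bigl(g_t(\vec{u}_\theta)\bigr) = +\infty$ and is ruled out by the second hypothesis. Escape to the cusp at $\sqrt{3}/3$ means the trajectory is eventually trapped inside arbitrarily small Euclidean disks tangent to the real axis at $\sqrt{3}/3$; since a disk of Euclidean radius $r$ tangent to the axis has maximal imaginary part $2r$, any such disk of radius $r \leq \frac{1}{2\sqrt{3}}$ lies below $y = \frac{1}{\sqrt{3}}$, so escape forces $\limsup \Im\bigl(g_t(\vec{u}_\theta)\bigr) \leq \frac{1}{\sqrt{3}}$, which is ruled out by the first hypothesis. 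Hence $\theta$ is not a completely periodic direction, and Veech's dichotomy yields unique ergodicity of $F_\theta$ on $Z^\circ$.

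The main obstacle is the dictionary step: correctly pairing the algebraic statement ``$\theta$ is parallel to a vector in $\Lambda$'' with the dynamical statement ``$g_t(\vec{u}_\theta)$ limits on an ideal vertex of $\Delta$,'' since directions on $\R^2$ and hyperbolic boundary parameters are related through the identification \eqref{eq:upper half-plane} together with the action of the Veech group described in Theorem \ref{thm:veech group}. Once that identification and the two-cusp picture of $\Delta$ are in hand, verifying that the threshold $y = \frac{1}{\sqrt{3}}$ indeed lies above every sufficiently small horocycle based at $\sqrt{3}/3$ is a short Euclidean computation, and the proof concludes by combining it with Veech's dichotomy.
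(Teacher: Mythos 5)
Your proof is correct, but it takes a genuinely different route from the paper's. The paper's proof is a two-line application of Masur's criterion \cite{masurs criterion}: the hypotheses on $\limsup$ and $\liminf$ of $\Im\big(g_t(\vec{u}_\theta)\big)$ force the trajectory to have an accumulation point in the finite-volume quotient $\SL(2,\R)/\big(V(Z^\circ) \cap \SL(2,\R)\big)$ (the unit tangent bundle of the double of $\Delta$), and non-divergence of this Teichm\"uller geodesic immediately yields unique ergodicity of $F_\theta$ on $Z^\circ$ --- no lattice property and no classification of periodic directions is needed, since an accumulation point \emph{anywhere} in the quotient suffices. You instead invoke Veech dichotomy \cite[Theorem 8.2]{veech}, which is legitimate here because $PV(Z^\circ)$ is a lattice by Proposition \ref{prop:generators}, and you rule out the completely periodic alternative via two dictionaries: completely periodic directions on $Z^\circ$ are exactly the $\Lambda$-directions (the forward implication via the finite cover $Z^\circ \to Y^\circ$, the converse being the last statement of Corollary \ref{cor:lattice directions on S}), and these correspond to geodesics exiting one of the two cusps, as the paper records in the remark after Theorem \ref{thm:lattice}. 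Your horodisk computation --- escape at $\sqrt{3}/3$ forces $\limsup \Im \leq \frac{1}{\sqrt{3}}$ since a Euclidean disk of radius $r$ tangent to the real axis has height $2r$, while escape at $\infty$ forces $\liminf \Im = +\infty$ --- is sound, and it is essentially the same two-cusp geometry that justifies the paper's unadorned ``by hypothesis'' step (there one also needs the easy observation that a geodesic avoiding both cusp ends recurs to a compact band $\{a \leq \Im z \leq b\} \cap \Delta$). The trade-off: the paper's route is shorter and more robust, while yours proves a formally stronger statement --- \emph{every} direction not parallel to a vector of $\Lambda$ is uniquely ergodic on $Z^\circ$, however the geodesic recurs; for a lattice group the two are equivalent, since divergent geodesics are exactly those with parabolic endpoints. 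One further simplification you could have made: $Z^\circ$ is just a flat torus with three marked points, so unique ergodicity in every irrational direction is classical Weyl equidistribution, bypassing both Veech dichotomy and Masur's criterion entirely.
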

\begin{proof}
By hypothesis the billiard trajectory $g_t(\vec{u}_\theta)$ has an accumulation point
in $\SL(2,\R)/\big(V(Z^\circ) \cap \SL(2,\R)\big)$. (This quotient is naturally the unit tangent bundle of the double of $\Delta$ across its boundary.)
By Masur's criterion \cite[Theorem 1.1]{masurs criterion}, straight-line flow on $Z^\circ$ in direction $\theta$ is uniquely ergodic.
\end{proof}

Recall that a cylinder $C$ which lifts to a strip has an element of the deck group $G_C$ associated to
it, see \S \ref{sect:ergodicity criterion}.

\begin{proposition}
\label{prop:horizontal}
In the horizontal direction, $Z^\circ$ has a horizontal cylinder decomposition consisting of three cylinders $C_0$, $C_1$, $C_2$ which lifts to strips in $S$. The cylinders can be indexed so that $\hol(C_i)=(1,0)$, $\Area(C_i)=\frac{\sqrt{3}}{2}$ and $G_{C_i}=- 2 \v_i$.
\end{proposition}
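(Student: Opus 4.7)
The plan is to trace horizontal trajectories through the three rhombi $R^0, R^1, R^2$ making up $Z^\circ$, using the gluing rules \eqref{eq:gluings}. In the translation structure on $Z^\circ$ inherited from $S$, each rhombus is a translate of $R^0_\0$ with horizontal short diagonal; this diagonal divides it into an upper and a lower equilateral triangle of side $1$. Horizontal trajectories in the upper triangle enter through edge $4$ and exit through edge $3$, and in the lower triangle they enter through edge $1$ and exit through edge $2$.

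Projecting \eqref{eq:gluings} to $Z^\circ$ yields the gluings $E_3(R^i)\leftrightarrow E_1(R^{i-1})$ and $E_2(R^i)\leftrightarrow E_4(R^{i+1})$ (indices mod $3$). Following these, the upper half of $R^i$ is joined to the lower half of $R^{i-1}$ (via $E_3\to E_1$), which is in turn joined back to the upper half of $R^i$ (via $E_2\to E_4$), producing a single cylinder. Thus $Z^\circ$ decomposes into three horizontal cylinders, each consisting of one upper-half and one lower-half triangle, hence of area $\tfrac{\sqrt{3}}{4}+\tfrac{\sqrt{3}}{4}=\tfrac{\sqrt{3}}{2}$.

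For the holonomy, I would observe that at height $y$ a horizontal chord has length $1-\tfrac{2y}{\sqrt{3}}$ in the upper triangle of one rhombus and length $\tfrac{2y}{\sqrt{3}}$ in the matched lower triangle (which in $S$ sits above the first by $\tfrac{\sqrt{3}}{2}$), summing to $1$; hence $\hol(C_i)=(1,0)$. To compute $G_{C_i}$, I lift a closed core curve to $S$. For the cylinder containing the upper half of $R^0$: starting in $R^0_\c$, the gluing $E_3\to E_1$ transports into $R^2_{\c+2\v_0}$, and then $E_2\to E_4$ transports into $R^0_{\c+2\v_0+2\v_2}=R^0_{\c-2\v_1}$, using $\v_0+\v_1+\v_2=\0$. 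So this cylinder has deck group element $-2\v_1$; call it $C_1$. The analogous computations give $C_2$ (upper half of $R^1$ and lower half of $R^0$) with $G_{C_2}=-2\v_2$, and $C_0$ (upper half of $R^2$ and lower half of $R^1$) with $G_{C_0}=-2\v_0$. Since each $-2\v_i$ has infinite order in $2\Lambda$, each $C_i$ lifts to a strip in $S$.

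The main obstacle is purely bookkeeping: chaining the gluings while keeping indices mod $3$ and the accumulated shift vectors in $2\Lambda$ straight. There are no conceptual difficulties beyond this.
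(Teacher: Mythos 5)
Your proof is correct: the edge-position conventions (with the short diagonal horizontal, edge $1$ is lower-left, $2$ lower-right, $3$ upper-right, $4$ upper-left), the projected gluings $E_3(R^i)\to E_1(R^{i-1})$ and $E_2(R^i)\to E_4(R^{i+1})$, and the two-step trace $R^0_\c \to R^2_{\c+2\v_0} \to R^0_{\c+2\v_0+2\v_2}=R^0_{\c-2\v_1}$ all check against \eqref{eq:gluings} and $\v_0+\v_1+\v_2=\0$, and the chord-sum computation correctly gives $\hol(C_i)=(1,0)$ with the rightward orientation of core curves (which you should fix explicitly, since $G_C$ depends on the orientation of the chosen core curve). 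But your route is genuinely more hands-on than the paper's. The paper's proof is a one-line appeal to the geometric description of $Z^\circ$ together with Proposition \ref{prop:monodromy}: the core curves of the three horizontal cylinders are identified with the homology classes $\alpha+\beta_0$, $\beta_1$, $\beta_2$ of Figure \ref{fig:torus fundamental group} (see the proof of Corollary \ref{cor:lattice directions on S}), and the deck elements are then read off as $h(\alpha+\beta_0)=-2\v_0$ and $h(\beta_i)=-2\v_i$ from the monodromy homomorphism \eqref{eq:h} via \eqref{eq:h on basis} or via intersection numbers with $\eta_0,\eta_1$. Your subscript bookkeeping through \eqref{eq:gluings} is in substance the same development-through-the-tiling computation the paper performed once to obtain \eqref{eq:h on basis}, but carried out inline for the specific core curves, so your argument avoids the homological packaging entirely and additionally makes explicit what the paper leaves to the ``geometric description'': each cylinder is the upper triangle of $R^i$ glued to the lower triangle of $R^{i-1}$, whence the area $\frac{\sqrt{3}}{4}+\frac{\sqrt{3}}{4}=\frac{\sqrt{3}}{2}$, circumference $1$, and the strip property from $\langle -2\v_i\rangle\cong\Z$ in $2\Lambda$. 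The trade-off is generality: the homological formulation is exactly what lets the authors transport the conclusion $\{\pm\w(C_i)\}=\{\pm 2\v_i\}$ to every drift-periodic direction by applying affine automorphisms (Proposition \ref{prop:monodromy of strips}), where a direct trace like yours would be unmanageable; for the single horizontal direction needed in this proposition, your elementary version is complete and arguably more transparent.
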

This follows from the geometric description of $Z^\circ$ and Proposition \ref{prop:monodromy}.

The cylinders lifting to strips as above determine
cusp neighborhoods as in Proposition \ref{prop:criterion}. The following gives the geometry of a corresponding horodisk defined in \eqref{eq:horodisk}.

\begin{proposition}
\label{prop:cusp neighborhood calculation}
The horodisk $H\big((1,0),\sqrt[4]{3}\big)$ is given by $\{z \in \C:~\Im~z>\frac{1}{\sqrt{3}}\}.$
\end{proposition}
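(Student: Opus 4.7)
The plan is to unwind the definition of the horodisk and compute directly in coordinates. By \eqref{eq:horodisk}, a point $[M] \in \H^2 = \O(2) \backslash \PGL(2,\R)$ lies in $H((1,0), \sqrt[4]{3})$ if and only if $|M(1,0)| < \sqrt[4]{3}$. Writing $M = \twotwo{a}{b}{c}{d}$, we have $M(1,0) = (a,c)$ and thus the defining inequality becomes $a^2 + c^2 < \sqrt{3}$.

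Next I would translate this into a condition on $\Im z$ using the identification \eqref{eq:upper half-plane}. Without loss of generality I may normalize so $ad - bc = 1$ (every coset has a representative in $\SL(2,\R)$ since we can rescale rows and we are taking an $\O(2)$ quotient). A direct rationalization gives
\begin{equation*}
\frac{di-b}{a-ci} = \frac{(di-b)(a+ci)}{a^2+c^2} = \frac{-(ab+cd)+(ad-bc)\,i}{a^2+c^2},
\end{equation*}
so $\Im z = \frac{ad-bc}{a^2+c^2} = \frac{1}{a^2+c^2}$. Therefore $|M(1,0)|^2 = 1/\Im z$, and the inequality $a^2+c^2 < \sqrt{3}$ is equivalent to $\Im z > 1/\sqrt{3}$.

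Combining these two steps gives $H((1,0),\sqrt[4]{3}) = \{z : \Im z > \tfrac{1}{\sqrt{3}}\}$, as claimed. There is no real obstacle here beyond bookkeeping: one must keep the normalization of the $\PGL$--action straight (the action of $M$ on $z$ is by the \emph{inverse} M\"obius transformation, as emphasized in \S\ref{sect:hyperbolic geometry}), and one should note that the identity $|M\v|^2 = 1/\Im z$ is independent of the choice of representative in the $\O(2)$-coset since $\O(2)$ preserves the Euclidean norm of $M\v$.
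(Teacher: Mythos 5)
Your proof is correct, and it takes a mildly different route from the paper's. The paper computes only along the imaginary-axis geodesic: it checks that $|g_t(1,0)|=e^{-t}$ while $[g_t]$ is based at $e^{2t}i$, concludes that $\frac{1}{\sqrt{3}}i \in \partial H$, and then finishes by invoking the qualitative fact that a horodisk based at $\infty$ is a horizontal half-plane. You instead compute the sublevel set for an arbitrary coset representative, deriving the identity $\Im z = \frac{ad-bc}{a^2+c^2}$, i.e.\ $|M(1,0)|^2 = 1/\Im z$ for $M \in \SL(2,\R)$, which produces the half-plane $\{\Im z > \frac{1}{\sqrt{3}}\}$ outright; so your argument \emph{proves}, rather than cites, that this horodisk is a half-plane, at the cost of a slightly longer computation. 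One small imprecision worth fixing: to put a representative of an $\O(2)$-coset into $\SL(2,\R)$ you rescale the \emph{whole} matrix (the projective freedom in $\PGL(2,\R)$) to make $|{\det}|=1$, and if the determinant is $-1$ you multiply on the left by an orientation-reversing element of $\O(2)$ such as $\twotwo{1}{0}{0}{-1}$; "rescaling rows" individually would change the coset. This normalization is also what makes the Busemann function $[M] \mapsto \log|M\v|$ well defined on $\H^2=\O(2)\backslash\PGL(2,\R)$ in the first place, exactly as the paper implicitly arranges by working in $\PSL(2,\R)$. With that reading, your two steps --- $|M(1,0)|^2=a^2+c^2$ and $\Im z = 1/(a^2+c^2)$ --- combine correctly, and your observation that $\O(2)$ preserves $|M\v|$ settles well-definedness on the coset.
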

\begin{proof}
Let $H$ denote $H\big((1,0),\sqrt[4]{3}\big)$.
By definition, for $M \in \PSL(2,\R)$, we have $[M] \in H$ if
$|M(1,0)|<\sqrt[4]{3}$.
Consider $M=g_t$ as in \eqref{eq:gt}. By \eqref{eq:upper half-plane}, $g_t$ is the unit tangent vector based at $e^{2t} i$. Also,
$|g_t(1,0)|=e^{-t}$ so that $e^{2t}i \in H$ if and only if $e^{-t}<\sqrt[4]{3}$. Thus
$\frac{1}{\sqrt{3}} i \in \partial H$. Since $H$
is a horodisk based at $\infty$, the proposition follows.
\end{proof}

\begin{proof}[Proof of Theorem \ref{thm:ergodicity on S}]
This is a consequence of Theorem \ref{thm:ergodicity criterion}. Fix $\theta$ as in the statement of the theorem.
The straight-line flow in direction $\theta$ is
ergodic on $Z^\circ$ by Proposition \ref{prop:ergodicity on Z}. Consider the deck group elements of $S$ associated to $2 \v_0$ and $2 \v_1$. We need to show that $(\theta,-2 \v_0)$ and $(\theta, -2 \v_1)$ are well-approximated by strips.
We use Proposition \ref{prop:criterion} for this.
The subgroup $\Gamma \subset V(Z^\circ)$ used in the Proposition is finite index by Corollary \ref{cor:index six}. In particular the parabolic $P \in V(Z^\circ)$ has a power which lies in $\Gamma$.

By Proposition \ref{prop:horizontal} the horizontal direction has three cylinders on $Z^\circ$ which satisfies $G_{C_i}=- 2 \v_i$ for $i=0,1,2$.
Proposition \ref{prop:criterion} then guarantees
that both $(\theta,-2 \v_0)$ and $(\theta, -2 \v_1)$ are well-approximated by strips if
there is an accumulation point of $[g_t r_\theta^{-1} \Gamma]$ in the cusp neighborhood ${\mathcal C}\big((1,0),\sqrt[4]{3}\big)$
of $\H^2/\Gamma$ which is defined as the projection of
$H\big((1,0),\sqrt[4]{3}\big)$
to $\H^2/\Gamma$.

The Veech group $V(Z^\circ)$ acts transitively on strip direction of $S$ by Theorem \ref{thm:veech group}. Also, all strip directions have oriented cylinders with
$G_C=- 2 \v_i$ for $i \in \{0,1,2\}$ by
Proposition \ref{prop:monodromy of strips}.
In particular, for all $M \in V(Z^\circ)$, if
$[g_t r_\theta^{-1} \Gamma]$ accumulates in a point in the cusp neighborhood ${\mathcal C}\big(M(1,0),\sqrt[4]{3}\big)$,
then we also get that both $(\theta,-2 \v_i)$ are well-approximated by strips for each $i \in \{0,1,2\}$.

Recall from Corollary \ref{cor:Delta} that
$\Delta=\H^2/V(Z^\circ)$. Since $\Gamma \subset V(Z^\circ)$, there is a natural covering map $p:\H^2/\Gamma \to \Delta$. The cusps neighborhoods found above consist of preimages under $p$ of the neighborhood consisting of points $z \in \Delta$ with
$\Im z > \frac{1}{\sqrt{3}}$. This follows from naturality of the definition of neighborhood (see \eqref{eq:H orbit}) and Proposition \ref{prop:cusp neighborhood calculation} in the case of the horizontal direction. Since the covering is finite-to-one, there is an accumulation point in one of these cusp neighborhoods if and only if there is an accumulation point of $[g_t r_\theta^{-1} V(Z^\circ)]$
in the cusp neighborhood of $\Delta$ described above.
This is guaranteed by hypothesis in the statement of the Theorem. Thus we get that $(\theta,-2 \v_0)$ and $(\theta, -2 \v_1)$ are well-approximated by strips
and therefore the flow in direction $\theta$ is ergodic on $S$ by Theorem \ref{thm:ergodicity criterion}.
\end{proof}

\begin{proof}[Proof of Theorem \ref{thm:main ergodic}]
By Proposition \ref{prop: standardization} and invariance of ${\mathcal E}$ under symmetries of the tiling, it suffices to consider the flow $T_{\theta,+}:X_{\theta,+} \to X_{\theta,+}$ for some $\theta \in {\mathcal E} \cap [\frac{\pi}{3}, \frac{2\pi}{3}].$ By Theorem \ref{thm:ergodicity on S}, the flow $F_\theta:S \to S$ is ergodic.
Consider the orbit-equivalence $\x:S \to X_{\theta,+}$
given by Theorem \ref{thm:orbit equivalence}.
Because the orbit equivalence is bilipschitz in the flow direction and measure preserving in the transverse direction, an $F_\theta$-invariant measurable set $A \subset S$ has measure zero
if and only if $\x(A) \subset X_{\theta,+}$ has measure zero. Thus $T_{\theta,+}$ is also ergodic.
\end{proof}

%
%
%

\compat{I updated the references that were published. You might update the papers you are co-author on if they are now published.}

\noindent Diana Davis, \\
Swarthmore College, Department of Mathematics and Statistics, 500 College Avenue, Swarthmore PA 19081 \\
\mbox{\url{ddavis3@swarthmore.edu }} \\

\noindent W. Patrick Hooper, \\
Department of Mathematics, The City College of New York, 160 Convent Ave, New York, NY 10031 \\
Department of Mathematics, CUNY Graduate Center, 365 5th Ave, New York, NY 10016 \\
\url{whooper@ccny.cuny.edu}\\

\end{document}